\documentclass{amsart}
\usepackage[T1]{fontenc}
\usepackage[english]{babel}
\usepackage{amsmath, amssymb, amsthm}
\usepackage{amsmath, mathtools}  
\usepackage{mathrsfs}
\usepackage{geometry}
\usepackage{graphicx}
\usepackage{fancyhdr} 
\usepackage{tikz}
\usetikzlibrary{decorations.markings,arrows.meta}
\usepackage{amscd}
\usepackage{epsf}
\usepackage{booktabs}
\usepackage{array}
\usepackage{multirow}
\usepackage{color}
\usepackage{footmisc}
\usepackage{lipsum}
\usepackage{latexsym}
\usepackage{verbatim}
\usepackage{bbm}
\usepackage{setspace}
\usepackage{hyperref}
\usepackage{cleveref}
\usepackage{etoolbox}
\usepackage[normalem]{ulem} 
\usepackage{thmtools} 
\usepackage{enumitem}
\usepackage{anyfontsize}

\renewcommand{\bar}[1]{\overline{#1}}

\setlist[enumerate,1]{label=\textbf{\arabic*.}, ref=\arabic*, leftmargin=*, itemsep=0.5ex, topsep=0.5ex}

\let\OLDthebibliography\thebibliography
\renewcommand\thebibliography[1]{
  \OLDthebibliography{#1}
  \setlength{\parskip}{2pt}
  \setlength{\itemsep}{0pt plus 0.3ex}
}

\makeatletter
\renewenvironment{proof}[1][\proofname]{%
  \par\pushQED{\qed}%
  \normalfont\topsep6\p@\@plus6\p@\relax
  \trivlist
  \item[\hskip\labelsep
        \textbf{\textup{#1}}\@addpunct{.}]\ignorespaces
}{%
  \popQED\endtrivlist\@endpefalse
}
\makeatother
\newcommand{\mysubjclass}{\textup{2020} \textbf{Mathematics Subject Classification:} 53E30, 53C26}
\newcommand{\mykeywords}{\textbf{Keywords:} hypercomplex geometry, hyperHermitian geometry, HKT, Hermitian curvature flow, conformal geometry}

\fancypagestyle{firstpagefooter}{
  \fancyhf{} 
  \fancyfoot[L]{\scriptsize
    \mysubjclass \\
    \mykeywords \\
    }
}

\hypersetup{
    colorlinks=true,
    linkcolor=blue,    
    citecolor=red,     
    urlcolor=blue,     
    pdftitle={Hermitian curvature flow and HKT structures},
    pdfauthor={B. Brienza and J. Streets}
}

\newtheorem{thm}{Theorem}[section]
\newtheorem{prop}[thm]{Proposition}
\newtheorem{lem}[thm]{Lemma}

\newtheorem{conj}[thm]{Conjecture}

\newtheorem{defn}[thm]{Definition}

\theoremstyle{definition}
\newtheorem{rmk}[thm]{Remark}

\numberwithin{equation}{section}

\newcommand{\brs}[1]{\left|#1\right|}
\newcommand{\til}[1]{\widetilde{#1}}
\newcommand{\del}{\partial}
\newcommand{\dt}{\frac{\del}{\del t}}

\newcommand{\N}{\mathbb{N}}

\newcommand{\R}{\mathbb{R}}

\newcommand{\SU}{\mathrm{SU}}
\newcommand{\SO}{\mathrm{SO}}
\newcommand{\Sp}{\mathrm{Sp}}

\newcommand{\Id}{\mathrm{Id}}

\newcommand{\tr}{\mathrm{tr}}

\renewcommand{\epsilon}{\varepsilon}
\renewcommand{\phi}{\varphi}

\renewcommand{\i}{\sqrt{-1}}
\renewcommand{\N}{\nabla}

\newcommand{\gl}{\lambda}
\newcommand{\dd}{\,d}

\usetikzlibrary{shapes.misc}
\tikzset{cross/.style={cross out, draw, 
         minimum size=2*(#1-\pgflinewidth), 
         inner sep=0pt, outer sep=0pt},
         cross/.default={3.3pt}}

\def\eqref#1{(\ref{#1})}
\title{Hermitian curvature flow and HKT geometry}

\author{Beatrice Brienza}
\address{Beatrice Brienza: IMPA\\
         Estrada Dona Castorina, 110 \\
         CEP 22460-320 \\
         Rio de Janeiro, RJ -- Brasil}
\email{brienza.beatrice@impa.br}

\author{Jeffrey Streets}
\address{Jeffrey Streets: Rowland Hall \\
         University of California, Irvine \\
         Irvine, CA 92617, USA}
\email{jstreets@uci.edu}

\date{\today}

\begin{document}

\begin{abstract} We identify a Hermitian curvature flow which preserves HKT geometry, and whose fixed points are HKT-Einstein metrics, equivalent to a flow suggested by Verbitsky in the context of the quaternionic Monge-Amp\`ere equation.  We exhibit a fundamental regularity obstruction and a monotonicity formula for the Chern scalar curvature.  We formulate a maximal existence time conjecture for this flow, and give a conditional resolution.  We establish the existence conjecture in dimension four.  We show the existence of a divergent sequence of HKT-Einstein metrics on quaternionic Hopf surfaces. These are the first non-homogeneous examples in the literature, and indicate the delicacy of the convergence question.  Finally we classify which strong HKT structures arising from bi-invariant metrics on Lie groups are also HKT-Einstein.
\end{abstract}

\maketitle

\thispagestyle{firstpagefooter}

\section{Introduction}

The study of Hermitian metrics on complex manifolds beyond the K\"ahler setting is a topic of great interest, motivated in part by its connections with string theory through the work of Hull and Strominger \cite{hull1986superstring,StromingerSST}.  A case of particular interest arises from generalizing hyperK\"ahler geometry, leading to the notion of \emph{hyperK\"ahler with torsion} (HKT) metrics, introduced by Howe and Papadopoulos \cite{HowePap}. An HKT manifold is a hyperhermitian manifold admitting a metric connection with skew-symmetric torsion whose holonomy is contained in $\mathrm{Sp}(n)$. From the viewpoint of physics, HKT geometry was introduced in \cite{HowePap} as the natural geometric framework for supersymmetric sigma models with Wess--Zumino terms \cite{grover2009,gutowski2000,gutowski2011}. Since then, HKT geometry has attracted significant attention in differential geometry, see, for instance, \cite{grantcharov2000,alesker2006,fusi2026,Joy,Ver02} and the references therein.  Of particular interest are \emph{strong HKT metrics}, i.e., those with closed skew-symmetric torsion.  We note that in early works (i.e. \cite{HowePap}) the notion of HKT metrics included this closure condition. Although HKT structures are abundant, compact strong HKT manifolds with non-vanishing torsion remain rare: all known examples are locally products of hyper-K\"ahler manifolds with group manifolds. Nevertheless, in low dimensions the geometry of strong HKT manifolds is now nearly understood, thanks to recent advances \cite{brienza2026, papadopoulos2025}.  More recently, \cite{fusi2026} introduced the notion of an \emph{HKT-Einstein} metric, a natural analogue of K\"ahler-Einstein metrics in the HKT setting. These metrics were further investigated in \cite{BedulliMarcocci2026, brienza2026}.

\medskip

In view of these special metrics in HKT geometry, note that in recent years various geometric flows have been employed to construct canonical Hermitian non-K\"ahler metrics (cf. survey articles \cite{phong2024geometric, st-geom, tosatti2021chern}).  In \cite{HCF} a general family of parabolic flows of Hermitian metrics was introduced, with the view that different flows would be well-adapted to the vast variety of natural structures in Hermitian geometry.  In the following years several different flows were shown to preserve certain natural conditions and saw geometric applications \cite{fei2020unification,JFS,LeeHCF, PCF, GKRF, UstinovskiyHCF}.  The present work derives a Hermitian curvature flow as in \cite{HCF} which preserves the HKT condition.  On a hypercomplex background $(M, I, J, K)$ consider the Hermitian curvature flow
\begin{align} \label{f:HKTflow}
    \frac{\partial}{\partial t} \omega =&\ - \Phi(g) = - S + Q,
\end{align}
where $Q$ is a quadratic expression built from the Chern and Bismut torsions (cf. \eqref{eqn:Q3}) and $\omega$ denotes the fundamental form of $(g_t,I)$.  Our first main result collects the formal properties of this flow which hold for all Hermitian curvature flows, such as short time existence and uniqueness, and a long-time existence obstruction.  Furthermore, the crucial new point is that it preserves HKT geometry, in which case we show it is the same as that appeared in \cite{BGV}, and we express it as Ricci flow coupled to torsion.

\begin{thm} \label{t:mainthm1} Let $(M^{4n}, I, J, K)$ be a compact hypercomplex manifold.  Given $g_0$ hyperHermitian:
\begin{enumerate}[label={(\arabic*)}]
    \item There exists a unique maximal smooth solution to (\ref{f:HKTflow}) on $[0, \tau)$ with initial condition $g_0$ for $\tau \leq \infty$.
    \item If $\tau < \infty$ then 
    \begin{align*}
        \limsup_{t \to \tau}  \left\{ \brs{R^C}_{g_t}, \brs{T}_{g_t}^2, \brs{\N^C T}_{g_t} \right\} = \infty,
    \end{align*}
    where $R^C$ is the curvature of the Chern connection and $T$ is its torsion.
    \item If $g_0$ is HKT then:
    \begin{enumerate} 
        \item $g_t$ is HKT for all $0 \leq t < \tau$.
        \item The associated HKT forms $\Omega_t$ satisfy
    \begin{align} \label{f:Omegaflow}
        \dt \Omega_t = \partial_J \alpha,
    \end{align}
    where $\alpha$ is the Obata connection $1$-form on the canonical bundle.
    \item The associated Riemannian metrics satisfy
    \begin{align} \label{f:HKTasGRF}
        \dt g = - \mathrm{Rc} +\tfrac14 H^2 - \tfrac12 \mathcal{L}_{\theta^\sharp}g - \tfrac 12 \hat Q^2,
    \end{align}
    where $\hat Q^2$ is defined in \eqref{eqn:Q_hat}.
    \end{enumerate}     
\end{enumerate}
\end{thm}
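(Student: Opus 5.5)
Parts (1) and (2) follow from the general Hermitian curvature flow theory of \cite{HCF}. The operator $\Phi(g) = S - Q$ has leading term the Chern curvature contraction $S = g^{j\bar i}\Omega_{i\bar j k\bar l}$. Its symbol is that of $-g^{j\bar i}\partial_i\partial_{\bar j} g_{k\bar l}$, which is strongly elliptic in the space of $I$-Hermitian metrics, and $Q$ is of lower order. Short-time existence and uniqueness therefore follow from standard parabolic theory. No gauge fixing is needed because $I$ is fixed and the symbol is already elliptic.

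To get a solution within hyperHermitian metrics, I would first check that $\Phi(g)$ is $J$-invariant whenever $g$ is hyperHermitian; this is the same computation needed in (3a) below. Then uniqueness together with the symmetry argument (or running the flow restricted to the $\Sp(n)$-invariant subbundle, where the symbol is still elliptic) shows that $g_t$ stays hyperHermitian. Part (2) is the smoothing/blowup criterion of \cite{HCF}. One derives Shi-type estimates for $\nabla^k R^C$ and $\nabla^k T$ under a bound on $|R^C|, |T|^2, |\nabla^C T|$. Then $g_t$ is uniformly equivalent to $g_0$ and converges smoothly as $t \to \tau$, so the flow can be restarted, contradicting maximality.

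For (3), the plan is to compute in HKT terms. Set $\Omega = \omega_J + \i\omega_K$, an $(2,0)$-form for $I$, so that HKT means $\partial\Omega = 0$. The key step is to identify $\Phi(g)$ for HKT $g$. Using the HKT identities relating Chern and Bismut torsions, including the Lee form $\theta$ and the fact that the Bismut connection has holonomy in $\Sp(n)$, I expect that the Bismut Ricci form is, up to sign, $d$ of the Obata connection $1$-form $\alpha$ on $K_I$. Taking the $(1,1)$ part, together with the chosen quadratic $Q$ from \eqref{eqn:Q3}, should show that $-\Phi(g)$, converted to a $(2,0)$-form via $J$, equals $\partial_J \alpha$ with $\partial_J = J^{-1}\bar\partial J$. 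This gives \eqref{f:Omegaflow} directly.

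Preservation of HKT then follows by a uniqueness argument. Solve the flow $\partial_t \Omega = \partial_J\alpha(\Omega)$ within $\partial$-closed $q$-positive $(2,0)$-forms. This is the Verbitsky/\cite{BGV} flow; one can also note that $\partial\partial_J\alpha = 0$ since $\partial\partial_J = -\partial_J\partial$ and $\partial\alpha$ is controlled. The solution exists for short time, and its associated metrics solve \eqref{f:HKTflow} by the identity just described, so by uniqueness in (1) they coincide with $g_t$. Formula \eqref{f:HKTasGRF} comes from converting the Bismut Ricci tensor: $\mathrm{Rc}^B = \mathrm{Rc} - \tfrac14 H^2 - \tfrac12 d^*H$, and one corrects by the Lee vector field term $\mathcal{L}_{\theta^\sharp}g$ and the residual $\hat Q^2$.

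The main obstacle is the curvature identity identifying $S - Q$ with $\partial_J\alpha$ on HKT metrics. I would attack it in quaternionic-unitary frames, using $\nabla^B$-parallelism of $I, J, K$ to symmetrize the Chern curvature under $J$ and track all torsion quadratics.
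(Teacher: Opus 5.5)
Your architecture matches the paper for (1), (2) and for the uniqueness argument behind (3a)--(3b). The paper also quotes the general Hermitian curvature flow theory for (1)--(2). For (3) it solves $\partial_t\Omega=\partial_J\alpha$ inside the space of HKT forms (or via the scalar reduction to a parabolic quaternionic Monge--Amp\`ere equation), checks that the induced metrics solve \eqref{f:HKTflow}, and concludes by uniqueness.

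The gap is the load-bearing identity itself. You need that for HKT metrics $S-Q=\Phi(g)=\tfrac12(\rho-J\rho)$, whose associated $(2,0)$-form is $-\partial_J\alpha$ by \eqref{eqn:del_Jalpha}. You leave this as the ``main obstacle'', and the mechanism you sketch for it cannot work. For an HKT structure the Bismut Ricci form vanishes identically, since the Bismut holonomy lies in $\Sp(n)$. So it is not $\pm d\alpha$: with $\partial\alpha=0$ one has $d\alpha=\bar\partial\alpha$, which is nonzero whenever $\rho\neq0$. Its $(1,1)$-part therefore contributes nothing. The form attached to $\alpha$ is the \emph{Chern} Ricci form $\rho=d(I\theta)$, with $\theta=\alpha+\bar\alpha$. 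Nor will symmetrizing $R^C$ under $J$ in adapted frames produce the identity by itself: $S$ and $\rho$ differ by terms involving derivatives of the torsion, and controlling these is the whole point.

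The paper closes this with three ingredients.
\begin{enumerate}
\item The Ivanov--Papadopoulos identity (Lemma \ref{l:Ricciswap}). With $\rho^B=0$ it reads $S=Q^1-\tfrac14\lambda^\omega$, where $\lambda^\omega=\tr_\omega dH$, so all the content sits in $\tr_\omega dH$.
\item Bismut's identity $\tfrac12 dH(X,Y,Z,W)=R^B(X,Y,Z,W)-R^B(Z,W,X,Y)+\nabla_ZH(W,X,Y)-\nabla_WH(Z,X,Y)$. Traced in the first pair and combined with $\rho=dI\theta$, it gives \eqref{eqn:trdH}: $\tfrac14\lambda^\omega=S^B-\rho+H_I^2$.
\item A type argument for $J$. $\lambda^\omega$ is $J$-anti-invariant because $dH$ is of type $(2,2)$ for $J$, while $S^B$ is $J$-invariant because $\nabla^B$ preserves $J$. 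Taking $J$-anti-invariant parts kills $S^B$ and gives $\tfrac14\lambda^\omega=-\tfrac12(\rho-J\rho)+\tfrac12(H_I^2-JH_I^2)$. Hence $S-Q=\tfrac12(\rho-J\rho)$, exactly for the $Q$ of \eqref{eqn:Q3}.
\end{enumerate}
This projection is what explains the choice of $Q$, and the same decomposition gives tangency (Proposition \ref{p:Phiprops}).

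Part (3c) inherits the gap. The paper obtains \eqref{f:HKTasGRF} from $\Phi=-\tfrac14\lambda^\omega+\tfrac12Q^2$, combined with \eqref{eqn:grf}, $\tfrac14\lambda^\omega(X,Y)=-\bigl(\mathrm{Rc}^B(X,IY)+(\nabla^B_X\theta)IY\bigr)$, and Ivanov's identity $\mathrm{Rc}^B(IX,IY)=\mathrm{Rc}^B(Y,X)-(\nabla^B_{IX}\theta)IY+(\nabla^B_Y\theta)X$. The $\mathcal L_{\theta^\sharp}g$ term comes out of the $\nabla^B\theta$ terms after $I$-symmetrization, not as a separate correction, and $\hat Q^2=Q^2(I\cdot,\cdot)$.

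Smaller points:
\begin{enumerate}
\item $\partial\partial_J\alpha=0$ needs $\partial\alpha=0$ (Verbitsky), not merely control of $\partial\alpha$.
\item Your claim that the flow stays hyperHermitian for non-HKT data is not part of the statement. It is also unsupported, since $S-Q=\tfrac12(\rho-J\rho)$ is only available under HKT.
\item With $\Omega=\omega_J+\sqrt{-1}\omega_K$ instead of the paper's $\tfrac12(\omega_J+\sqrt{-1}\omega_K)$, the right-hand side of \eqref{f:Omegaflow} doubles.
\end{enumerate}
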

\noindent 
Thus we have a well-posed flow which preserves HKT geometry.  In case the initial data is HKT we refer to this as a solution to \emph{HKT Ricci flow}.  We note that the flow (\ref{f:Omegaflow}) was originally defined in \cite{BGV} as a tool for understanding the quaternionic Monge-Amp\`ere equation and HKT-Einstein metrics.  Curiously, the evolution equation (\ref{f:HKTasGRF}) differs from the the metric evolution for generalized Ricci flow only by the term $\hat{Q}^2$, suggesting that it may inherit some similar properties.

A fundamental point in the study of Ricci flow is the scalar curvature montonicity formula.  We show a monotonicity formula along HKT Ricci flow for the Chern scalar curvature, which leads to a priori lower bounds.  In the statement below $\square$ is the time-dependent Chern heat operator.

\begin{thm} \label{t:scalarthm} (cf. Theorem \ref{t:scalarthm_bulk})
 Given $g_t$ a solution to the HKT Ricci flow,
\begin{align*}
\square s= \tfrac 12 |\Phi(g)|^2.
\end{align*}
Assuming $M$ is compact, 
\begin{enumerate} [label={(\arabic*)}]
    \item $\inf_{M \times \{t\}} s \geq \inf_{M \times \{0\}} s$,
    \item $\inf_{M \times \{t\}} s \geq - \tfrac{2n}{t}$,
    \item If $\inf_{M \times \{0\}} s = \sigma > 0$, then the maximal smooth existence time of the flow is $T \leq \tfrac{2n}{\sigma}$.
\end{enumerate}
\end{thm}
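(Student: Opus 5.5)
The plan has two stages: first derive the evolution identity $\square s = \tfrac12|\Phi(g)|^2$, then obtain items (1)--(3) from it by the maximum principle.

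\textbf{Evolution identity.} Here $s = \tr_\omega \rho$ is the Chern scalar curvature, where $\rho = -\i\del\bar\del\log\det g$ is the Chern--Ricci form. Along \eqref{f:HKTflow} write $\dt \omega = -\Phi$. The first variation of $\rho$ is
\begin{align*}
\dt \rho = \i\del\bar\del \tr_\omega \Phi,
\end{align*}
so
\begin{align*}
\dt s = \langle \Phi, \rho\rangle + \tr_\omega\bigl(\i\del\bar\del \tr_\omega\Phi\bigr) .
\end{align*}
The second term is the Chern Laplacian of $\tr_\omega \Phi$. The key HKT input is that $\tr_\omega \Phi$ is a constant multiple of $s$. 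Roughly, $\tr_\omega S = s$, and the trace of $Q$ vanishes up to a combination that is absorbed using the HKT identities, e.g.\ $\theta_I=\theta_J=\theta_K$, the form of the torsion, and $\Omega$ being $\del$-closed. I would verify this with the explicit $Q$ in \eqref{eqn:Q3}, or more cleanly via the $\Omega$-evolution \eqref{f:Omegaflow}. Applying $\del_J$ to $\alpha$ shows that $\Phi$ is, up to sign, the $(1,1)$-projection of $\rho$ onto the hyperhermitian $Sp(n)Sp(1)$-invariant part, that is, its $J$-invariant component $\rho^{J} = \tfrac12(\rho + J^*\rho)$.

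With this identification, $\langle \Phi,\rho\rangle = |\rho^{J}|^2 = |\Phi|^2$. Combining with the trace term, and keeping careful track of factors of $2$ coming from the normalization of the trace on a $4n$-manifold, yields $\square s = \tfrac12|\Phi|^2$.

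\textbf{Main obstacle.} I expect the hardest step to be justifying the identification of $\Phi$ with the projection of the Chern--Ricci form, and hence $\tr_\omega\Phi = s$ (suitably normalized). This requires a careful HKT computation comparing the Obata connection form $\alpha$ with the Chern connection on $K_{(M,I)}$. I would start from the fact that for HKT metrics the Chern--Ricci form of $I$ equals $\del\del_J$-type expressions of $\log$ of the $\Omega^n$-density, as in \cite{BGV}.

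\textbf{Consequences.} For (1), apply the maximum principle to $\square s \ge 0$.

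For (2), decompose $|\Phi|^2 \ge |\tr_\omega\Phi|^2/\dim_{\mathbb C}$. This gives $\square s \ge \tfrac{1}{2n}s^2$ with the correct normalization; the constant $2n$ must match the dimension count. The ODE comparison $\phi' = \phi^2/(2n)$ then gives $\phi(t) \geq -2n/t$ for solutions starting at $-\infty$, and the maximum principle applied to $s$ versus $\phi$ yields the bound.

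For (3), comparison with the ODE solution starting at $\sigma$ gives
\begin{align*}
s \ge \frac{\sigma}{1-\sigma t/(2n)},
\end{align*}
which blows up at $t = 2n/\sigma$. Hence $T \le 2n/\sigma$.
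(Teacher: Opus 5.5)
Your argument is correct, and items (1)--(3) are obtained exactly as in the paper: the trace decomposition $\tfrac12\brs{\Phi}^2 \geq \tfrac{1}{2n}s^2$, then the maximum principle and comparison with $f' = \tfrac{1}{2n}f^2$.

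Where you differ is in computing $\tr_\omega \dt\rho$.
\begin{itemize}
\item You use the universal first variation of the Chern--Ricci form, $\dt\rho = -\i\del\bar\del\,\tr_\omega(\dt\omega)$ (sign according to convention). This holds for any family of $I$-Hermitian metrics. Combined with $\tr_\omega\Phi = s$, it gives $\dt\rho$ as $\i\del\bar\del s$ up to normalization.
\item The paper instead proves Lemma \ref{l:leeform}, $\dt\theta = -\tfrac12 ds$. It differentiates the defining relation $\del\bar\Omega^n = \alpha\wedge\bar\Omega^n$ along the $\Omega$-flow \eqref{f:Omegaflow}, and then invokes the HKT identities $\theta = \alpha + \bar\alpha$ and $\rho = dI\theta$.
\end{itemize}
The two are really the same mechanism: the variation of a connection form on $K_{(M,I)}$ is $\del$ of the logarithmic variation of the corresponding volume density. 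You apply it to the Chern connection; the paper applies it to the Obata connection. Your route is shorter and uses nothing HKT-specific beyond the fact that along HKT Ricci flow $\dt\omega = -\tfrac12(\rho - J\rho)$. The paper's route gives the slightly finer first-order statement about the Lee form, and stays within the $\Omega$/quaternionic Monge--Amp\`ere formalism of \S\ref{ss:scalarreduction}.

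The step you single out as the main obstacle is not one, but as written it contains a sign slip you should fix. In \eqref{f:phi}, $\Phi$ is \emph{defined} as $\tfrac12(\rho - J\rho)$, with $(J\beta)(X,Y) = \beta(JX,JY)$. So it is the $J$-anti-invariant part of $\rho$, not the $J$-invariant part $\tfrac12(\rho + J\rho)$ you wrote.

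This matters for your computation. Since $J\omega = -\omega$, one has $\tr_\omega(J\beta) = -\tr_\omega\beta$. Hence $\tfrac12(\rho + J\rho)$ is $\omega$-traceless and would make the Laplacian term vanish, whereas $\tr_\omega\tfrac12(\rho - J\rho) = \tr_\omega\rho = s$.

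With the correct sign, both facts you need are immediate: $\tr_\omega\Phi = s$, and $\langle\Phi,\rho\rangle = \brs{\Phi}^2$ because $\tfrac12(1 - J)$ is an orthogonal projection on $2$-forms. No comparison of $\alpha$ with the Chern connection and no analysis of $Q$ is required. Alternatively, $\tr_\omega S = s$ together with $\tr_\omega Q = 0$ from Lemma \ref{l:Q_1} gives the same trace identity via Proposition \ref{p:PhiasChern}.
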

The HKT Ricci flow is closely linked to the theory of the parabolic quaternionic Monge-Amp\`ere equation, allowing for more definitive results.  To begin we formulate a maximal existence time conjecture in analogy K\"ahler-Ricci flow criteria established by Tian-Zhang \cite{TianZhang}.  A related conjecture for hyperHermitian metrics flowing by equation (\ref{f:Omegaflow}) appears in \cite[Conjecture 5.1]{BGV}. In the HKT setting, there is a natural notion of positive cone in analogy with the K\"ahler cone, and the conjecture predicts smooth existence of the HKT flow as long as the class remains in this cone:

\begin{conj} \label{c:coneconjintro} (cf. Conjecture \ref{c:coneconj}) Let $(M^{4n}, I, J, K, g_0)$ be a compact HKT manifold.  Let
\begin{align*}
    \tau^* := \sup\ \{ t \geq 0\ |\ [\Omega_0]_{qBC} - t c_1^{qBC} \in \mathcal P \}.
\end{align*}
The maximal smooth solution to (\ref{f:HKTflow}) exists on $[0, \tau^*)$.
\end{conj}
\noindent Global existence and convergence of the flow on  hyperK\"ahler backgrounds was shown in \cite{BGV}, in particular resolving Conjecture \ref{c:coneconjintro} in this case. We prove two conditional resolutions relying on further a priori assumptions.  Case (1) of the following theorem is implicit in \cite{BGV}:
\begin{thm} \label{t:coneconjthm} (cf. Theorem \ref{t:coneconjthmbulk}) Let $(M^{4n}, I, J, K, g_0)$ be a compact HKT manifold.  Suppose the solution to HKT Ricci flow with this initial data exists on $[0, \tau)$ where $\tau < \tau^*$ and either
\begin{enumerate} [label={(\arabic*)}]
    \item $\sup_{M \times [0, \tau)} \tr_{g_{0}}g < \infty$,
    \item $\sup_{M \times [0, \tau)} \brs{T} < \infty$.
\end{enumerate}
Then the flow extends smoothly past time $\tau$.
\end{thm}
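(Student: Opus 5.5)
We reduce the HKT Ricci flow to a parabolic quaternionic Monge--Amp\`ere equation and then prove higher-order estimates under each hypothesis.

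\textbf{Reduction to a scalar equation.} Since $\tau < \tau^*$, the class $[\Omega_0]_{qBC} - \tau c_1^{qBC}$ lies in $\mathcal P$. Fix a smooth representative $\chi$ of $c_1^{qBC}$, for instance $\chi = \partial\partial_J$ of $\log$ of the ratio of $\Omega_0^n$ to a fixed holomorphic section of the canonical bundle. Define reference forms $\hat\Omega_t = \Omega_0 - t\chi$. By openness of $\mathcal P$ and adding $\partial\partial_J$ of a fixed function, we may assume $\hat\Omega_t$ is HKT (positive) uniformly for $t \in [0,\tau]$.

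By \eqref{f:Omegaflow}, $\Omega_t = \hat\Omega_t + \partial\partial_J \varphi$, where $\varphi$ solves
\begin{align*}
\dt \varphi = \log \frac{(\hat\Omega_t + \partial\partial_J\varphi)^n}{\Omega_0^n}, \qquad \varphi(0)=0.
\end{align*}
This is the flow studied in \cite{BGV}. The maximum principle gives $|\varphi| \le C$ and $|\dot\varphi| \le C$ on $[0,\tau)$. For the upper bound on $\dot\varphi$, differentiate the equation in $t$. For the lower bound, use $(\tau-t)\dot\varphi + \varphi$ or a similar combination, as in Tian--Zhang. Consequently the volume ratio $\Omega_t^n/\Omega_0^n$ is bounded above and below.

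\textbf{Case (1).} The bound $\tr_{g_0} g \le C$ together with the volume bound gives two-sided metric equivalence $C^{-1} g_0 \le g_t \le C g_0$. Higher-order regularity then follows as in \cite{BGV}:
\begin{enumerate}
\item Obtain a $C^{2,\alpha}$ (or Calabi-type $C^3$) estimate for $\varphi$ using an Evans--Krylov argument adapted to the quaternionic Monge--Amp\`ere operator. This operator is concave in $\partial\partial_J\varphi$.
\item Apply parabolic Schauder bootstrapping to get all derivatives bounded.
\end{enumerate}
Uniform smooth bounds on $g_t$ up to $\tau$ let the solution converge smoothly as $t \to \tau$. Short-time existence (Theorem \ref{t:mainthm1}(1)) then restarts the flow past $\tau$.

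\textbf{Case (2).} The main task is to derive the trace bound of case (1) from $|T| \le C$; this is the main obstacle. We plan to compute $\square \log \tr_{g_0} g$ with the Chern heat operator, as in the Aubin--Yau/Cherrier second-order estimate for the Chern--Ricci flow. The bad terms are torsion terms of $g_t$ and $g_0$ contracted against $\nabla \tr_{g_0} g$. These are controlled in two ways:
\begin{itemize}
\item Terms involving $T_{g_t}$ are bounded using the hypothesis.
\item Terms involving the torsion and curvature of $g_0$ are bounded by $C\,\tr_g g_0$.
\end{itemize}
We then use the quantity $\log \tr_{g_0} g - A\varphi$ with $A$ large. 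This quantity satisfies
\begin{align*}
\square\bigl(\log \tr_{g_0} g - A\varphi\bigr) \le -\tr_g g_0 + C,
\end{align*}
using $\dot\varphi$ bounded and $\hat\Omega_t$ uniformly positive. The maximum principle bounds $\tr_g g_0$ at the maximum point, which combined with the volume bound bounds $\tr_{g_0} g$ there. Since $\varphi$ is bounded, this gives $\tr_{g_0} g$ bounded everywhere, so case (1) applies.

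Alternatively, if the gradient terms resist this treatment, we would use the Theorem \ref{t:mainthm1}(2) criterion directly. We would try to bound $|R^C|$ and $|\nabla^C T|$ from $|T|$ via Shi-type estimates for the evolution \eqref{f:HKTasGRF}. This is harder without a metric bound, so the trace route is our first attempt.
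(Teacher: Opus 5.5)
Your overall architecture matches the paper's proof. You reduce to a parabolic quaternionic Monge--Amp\`ere flow, bound $\varphi$ and $\dot\varphi$ (hence the volume form) by the maximum principle, and handle case (1) with Chu's Evans--Krylov estimate plus Schauder theory. For case (2) you apply the maximum principle to $\log\tr_{\hat\omega}\omega_\varphi - A\varphi$, targeting exactly the inequality the paper uses. The gap is in how you propose to derive that inequality, which is the one non-routine step.

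You model the computation on the Chern--Ricci flow. There, however, $\partial\omega_t=\partial\hat\omega$ is fixed, which is why every torsion term is a background quantity. Here $\omega_t$ moves by the Salamon form associated to $\partial\partial_J\varphi$. That form is not closed in general, so the torsion of $g_t$ genuinely evolves and involves third derivatives of $\varphi$. Suppose you trace a velocity built from the first Chern--Ricci form, $\Phi=\tfrac12(\rho-J\rho)$, and commute it into $\Delta_{\omega}\tr_{\hat\omega}\omega$. You then pick up derivatives $\nabla T$ of the evolving torsion, which $\sup\brs{T}<\infty$ does not control.

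Nor can terms like $T\star\partial\tr_{\hat\omega}\omega/(\tr_{\hat\omega}\omega)^2$ be ``bounded using the hypothesis.'' The good term $-\brs{\Upsilon}^2/\tr_{\hat\omega}\omega$ is entirely consumed by $\brs{\partial\tr_{\hat\omega}\omega}^2_\omega/(\tr_{\hat\omega}\omega)^2$, via $\brs{\partial\tr_{\hat\omega}\omega}^2_\omega\le\tr_{\hat\omega}\omega\,\brs{\Upsilon}^2$. The critical-point trick only converts $\partial\log\tr_{\hat\omega}\omega$ into $A\,\partial\varphi$, for which you have no bound.

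The missing idea is to use the Hermitian curvature flow form $\dt\omega=-S+Q$ (Proposition \ref{p:PhiasChern}(2)), where $S$ is the \emph{second} Chern--Ricci curvature. In holomorphic coordinates,
$S_{i\bar j}=-g^{k\bar l}\partial_k\partial_{\bar l}g_{i\bar j}+g^{k\bar l}g^{p\bar q}\partial_kg_{i\bar q}\partial_{\bar l}g_{p\bar j}$.
So $-\tr_{\hat\omega}S$ equals $\Delta_{\omega}\tr_{\hat\omega}\omega$ up to $-\brs{\Upsilon}^2$ and a curvature term of $\hat\omega$. This is the Schwarz-lemma identity of Lemma \ref{l:schwarz}:
$\square\tr_{\hat\omega}\omega=-\brs{\Upsilon}^2+\tr_{\hat\omega}Q-\omega^{-1}\star\omega\star R^C_{\hat\omega}$.
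It contains no derivatives of torsion and no torsion--gradient cross terms. The evolving torsion enters only through $Q$, which is quadratic in $T$, so $\tr_{\hat\omega}Q\le C\brs{T}^2\tr_{\hat\omega}\omega$. Hence $\square\log\tr_{\hat\omega}\omega\le C\brs{T}^2+C\tr_{\omega}\hat\omega$, and your $-A\varphi$ argument then closes case (2).

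Smaller points:
\begin{itemize}
\item A nowhere-vanishing holomorphic section of $K_I$ need not exist, and when it does $c_1^{qBC}=0$. Take $\chi=\Phi(g_0)=-\partial_J\alpha_0$ instead.
\item With the paper's convention $\tr_\Omega\partial\partial_J f=-\Delta_\omega f$, the potential must enter as $\hat\Omega_t-\partial\partial_J\varphi$ for the equation to be forward parabolic.
\item $(\tau-t)\dot\varphi+\varphi$ gives a lower bound that degenerates as $t\to\tau$. Use some $\tau'\in(\tau,\tau^*)$, or $\dot\varphi\pm A\varphi$ as in Proposition \ref{p:phidotestimate}.
\end{itemize}
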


Going further, we show that Conjecture \ref{c:coneconjintro}holds in quaternionic dimension $n = 1$.  We state the result instead in terms of the \emph{normalized HKT Ricci flow}.  Noting that HKT-Einstein metrics can only occur with nonnegative Einstein constant (cf. \S \ref{ss:HKTE}), which can then be normalized to $1$ if positive.  Thus we define
\begin{align} \label{eq:normalizedHKTflow}
\dt \omega = - \Phi(g) + \omega.
\end{align}
By standard arguments it follows that this flow also preserves the HKT condition, and differs from HKT Ricci flow by a spacetime rescaling.

\begin{thm} \label{t:4dthm} Fix $(M^4, g, I, J, K)$ a compact HKT manifold.  Then either:
\begin{enumerate} [label={(\arabic*)}]
    \item $(M, I)$ is hyperK\"ahler, and the HKT flow with initial condition $g$ exists on $[0, \infty)$ and converges to the unique hyperK\"ahler metric $g_{\mathrm{HK}}$ conformal to $g$ with $[\Omega_{\mathrm{HK}}] = [\Omega]$.
    \item $(M, I)$ is a quaternionic Hopf surface, and the normalized HKT flow with initial condition $g$ exists on $[0,\infty)$.
\end{enumerate}
\end{thm}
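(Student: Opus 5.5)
The plan has two parts. First I invoke the classification of compact HKT four-manifolds. Then I treat each of the two resulting cases separately, using Theorem \ref{t:coneconjthm} as the main tool.

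\textbf{Classification.} In quaternionic dimension one, a hypercomplex structure on a compact four-manifold is, by Boyer's classification, either hyperK\"ahler (a torus or K3) or a quaternionic Hopf surface. Moreover, every hyperHermitian metric in real dimension four is automatically HKT, since $\partial \Omega$ is a $(3,0)$-form on a space of complex dimension two. In this dimension the HKT condition is conformally invariant, and the Lee form $\theta$ determines everything.

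\textbf{HyperK\"ahler case.} I reduce the flow \eqref{f:Omegaflow} to the parabolic quaternionic Monge--Amp\`ere equation, written relative to the hyperK\"ahler reference form $\Omega_{\mathrm{HK}}$. Concretely, I write $\Omega_t = \Omega_{\mathrm{HK}} + \partial\partial_J \phi_t$. Because $c_1^{qBC}=0$ here, the class $[\Omega_t]$ is constant, so $\tau^*=\infty$. In dimension four the quaternionic Monge--Amp\`ere equation is simply $\partial_t \phi = \log(\Omega_\phi^2/\Omega_{\mathrm{HK}}^2)$, and this is a scalar parabolic equation.

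Global existence and convergence on hyperK\"ahler backgrounds is the result of \cite{BGV}, which I would cite. Alternatively, I would rederive it from case (1) of Theorem \ref{t:coneconjthm} via a Yau-type $C^2$ estimate for $\tr_{g_0} g$; this estimate works because the reference metric is flat in the relevant curvature. The limit is the unique hyperK\"ahler metric in the class $[\Omega]$. It is conformal to $g$ because in real dimension four every hyperHermitian metric compatible with $(I,J,K)$ lies in a single conformal class. Uniqueness follows from uniqueness in the quaternionic Calabi problem.

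\textbf{Hopf case.} Here $M$ is a quotient of $\mathbb{H}^*$ by a discrete group, with its standard locally conformally hyperK\"ahler metric $g_{\mathrm{st}} = |x|^{-2} g_{\mathrm{flat}}$. Every HKT metric is $g = e^{f} g_{\mathrm{st}}$, so the normalized flow reduces to a single PDE for the conformal factor. I would write this PDE explicitly using \eqref{f:HKTasGRF} together with the fact that $\theta_{\mathrm{st}}$ is parallel. I expect it to take the form of a Yamabe/Ricci-type equation $\partial_t f = e^{-f}\Delta_{\mathrm{st}} f + (\text{lower order in } df) + 1 - c\, e^{-f}$.

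The key steps for this PDE are as follows.
\begin{enumerate}
\item Obtain a two-sided $C^0$ bound on $f$ from the maximum principle. The constant term $1$ from the normalization balances against the positive Chern scalar curvature of $g_{\mathrm{st}}$; note that Theorem \ref{t:scalarthm} forces finite time for the unnormalized flow, which is exactly why we normalize.
\item Obtain a gradient bound on $f$. I would use a Bernstein argument applied to $|\nabla f|^2$ together with the uniform ellipticity supplied by the $C^0$ bound.
\item Conclude regularity. With $f$ and $\nabla f$ bounded, $\tr_{g_{\mathrm{st}}} g$ and $|T|$ are bounded, since the torsion of a conformal metric is controlled by $\theta_{\mathrm{st}} + df$. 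Case (2) of Theorem \ref{t:coneconjthm}, applied after undoing the spacetime rescaling, then gives smooth extension on every finite interval, hence existence on $[0,\infty)$.
\end{enumerate}

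\textbf{Main obstacle.} I expect the gradient estimate to be the hardest step. The quadratic terms in $df$ arising from $\hat Q^2$ and $\mathcal{L}_{\theta^\sharp} g$ may have a bad sign. My first attempt would be to apply the maximum principle to the auxiliary quantity $e^{-Af}|\nabla f|^2$ with $A$ large, or to use $|\theta_g|^2$ directly, since $\theta_g = \theta_{\mathrm{st}} + df$. If neither works, I would exploit the $\mathrm{U}(1)\times\mathbb{R}$ symmetry of Hopf surfaces only for the $C^0$ step, and use parabolic De Giorgi--Nash--Moser estimates in place of the gradient bound.
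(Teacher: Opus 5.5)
\textbf{The gap is in Step 1 of your Hopf case, and it cannot be closed without changing the statement.} By Lemma \ref{l:4dconformal}, the normalized flow is exactly $\partial_t f = e^{-f}(\Delta_{\bar\omega} f - 1) + 1$, where $\bar g = g_{\mathrm{Hopf}}$ is scaled to have HKT--Einstein constant $1$.

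\begin{itemize}
\item At a spatial minimum, the maximum principle gives only $\frac{d}{dt} f_{\min} \geq 1 - e^{-f_{\min}}$. The comparison ODE $y' = 1 - e^{-y}$ reaches $-\infty$ in finite time whenever $y(0)<0$.
\item This is sharp, not an artifact. The Lee form $\theta$, and hence $\Phi$, is invariant under constant rescaling. So the flow starting at $\epsilon\, g_{\mathrm{Hopf}}$ with $0<\epsilon<1$ is $c(t)\, g_{\mathrm{Hopf}}$ with $c' = c-1$, and it degenerates at $t = -\log(1-\epsilon)$.
\item More generally, $g_{\mathrm{Hopf}}$ is Gauduchon, so the $g_{\mathrm{Hopf}}$-average $m$ of $e^f$ satisfies $m' = m-1$. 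The normalized flow therefore dies in finite time whenever $m(0)<1$.
\end{itemize}

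Normalizing does not remove the singularity forced by Theorem \ref{t:scalarthm}. It only maps unnormalized time $[0,1)$ onto $[0,\infty)$. The unnormalized flow lives at most until $\tau^* = m(0)$, because $[\Omega_0]_{qBC} = m(0)\,[\Omega_{\mathrm{Hopf}}]_{qBC}$ and $c_1^{qBC} = [\Omega_{\mathrm{Hopf}}]_{qBC}$. This is also where your Step 3 breaks. Invoking Theorem \ref{t:coneconjthm} ``after undoing the spacetime rescaling'' silently requires every $\tau<1$ to satisfy $\tau<\tau^*$, i.e.\ $m(0)\geq 1$. The paper's own one-line proof applies the same maximum principle to \eqref{f:normalizedconformalflow} and has the same defect. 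That argument closes only when $f_0 \geq 0$, where $f\geq 0$ is preserved.

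\textbf{The fix.} Item (2) needs the extra hypothesis $m(0)\geq 1$. Under it:
\begin{itemize}
\item Take the upper bound on $f$ from the maximum principle, as you planned.
\item Take the lower bound from the scalar reduction of the unnormalized flow. For $n=1$ one has $\Omega_\varphi = e^{\dot\varphi + \hat f}\,\Omega_0$, so $f = \dot\varphi + \hat f + f_0$, and a lower bound on $\dot\varphi$ is a lower bound on $f$.
\item The lower bound on $\dot\varphi$ comes from the Tian--Zhang quantity $(\tau - t)\dot\varphi + \varphi + nt$. By Lemma \ref{l:phidotev}, its heat operator equals $\tr_{\Omega_\varphi}\widetilde{\Omega}_\tau > 0$.
\end{itemize}

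\textbf{Your ``main obstacle'' does not arise.} Lemma \ref{l:phi4d} gives $\Phi(e^f\bar g) = \Phi(\bar g) - (\Delta_{\bar\omega} f)\,\bar\omega$ exactly. So no quadratic gradient terms from $\hat{Q}^2$ or $\mathcal{L}_{\theta^\sharp} g$ survive. Once $f$ is bounded, the equation is a uniformly parabolic scalar equation, and Krylov--Safonov plus Schauder give everything. Equivalently, case (1) of Theorem \ref{t:coneconjthm} applies, since $\tr_{g_0} g$ is a constant multiple of $e^{f-f_0}$. No gradient or torsion bound is needed.

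\textbf{HyperK\"ahler case and a minor slip.} Your hyperK\"ahler case is fine by citing \cite{BGV}. In the paper it is just the $\lambda=0$ case of \eqref{f:conformalflow}, where the maximum principle gives uniform two-sided bounds. For $n=1$ the Monge--Amp\`ere flow should read $\partial_t\varphi = \log(\Omega_\varphi/\Omega_{\mathrm{HK}})$, since squares of $(2,0)$-forms vanish in complex dimension two.
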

\noindent The hyperK\"ahler background case follows from \cite{BGV} but we provide the elementary proof in dimension four for convenience.  By a standard spacetime rescaling argument the global existence of the normalized flow on Hopf surfaces also implies Conjecture \ref{c:coneconjintro} in this case.  We show that in dimension four the flow reduces to a conformal flow, in fact the Chern-Yamabe flow \cite{Calamai,LM}.  That the background manifold is either hyperK\"ahler or Hopf follows from the Boyer classification of hyperHermitian four-manifolds \cite{Bo}.  On the Hopf surface we show time-dependent estimates for the normalized flow against the Hopf-Boothby background.  

The convergence of the flow in the case of the Hopf surface seems to be a delicate question.  Initially it seems natural to expect that the standard Hopf metric is the unique HKT-Einstein metric with $\gl > 0$ in dimension four, and hence one should expect convergence of the flow to this metric.  In the next theorem we exhibit infinitely many HKT-Einstein metrics on the Hopf surface which blow up, destroying these expectations:

\begin{thm} \label{t:4dnonuniquenessinto} (cf. Theorem \ref{t:4dnonuniquenessbulk}) Consider the standard quaternionic Hopf surface $(S^1 \times \SU(2), I,J,K, g_{\mathrm{Hopf}})$. There exists a sequence $\{u_k\} \in C^{\infty}(S^1 \times \SU(2))$ such that
\begin{enumerate} [label={(\arabic*)}]
    \item $\Phi(e^{u_k} g_{\mathrm{Hopf}}) = e^{u_k} g_{\mathrm{Hopf}}$,
    \item $\lim_{k \to \infty} \brs{u_k}_{C^0} = \infty$.
\end{enumerate}
\end{thm}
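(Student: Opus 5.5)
The approach is to reduce the HKT-Einstein equation on the Hopf surface to a scalar conformal equation, following the dimension-four discussion above where the flow becomes a Chern--Yamabe type flow, and then to produce a divergent family of solutions by bifurcation from the trivial solution. For $g = e^{u} g_{\mathrm{Hopf}}$ with $(I,J,K)$ fixed, the metric is hyperHermitian and, in real dimension four, automatically HKT. In that case $\Phi(g)$ should be a multiple of $\omega$ whose factor is computed from the Chern scalar curvature. So $\Phi(e^u g_{\mathrm{Hopf}}) = e^u g_{\mathrm{Hopf}}$ becomes a scalar PDE. Using the conformal change of the Chern (or Gauduchon) scalar curvature for the Hopf background with Lee form $\theta_0$, it should take the form
\begin{align*}
\Delta_{\mathrm{Hopf}} u + \langle \theta_0, du\rangle + c_0 = c_1 e^{u},
\end{align*}
where $c_0$ is the constant Chern scalar curvature of $g_{\mathrm{Hopf}}$ and $\Delta$ is the Chern Laplacian. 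The exact constants are fixed by the computation of $\Phi$ in dimension four. I would then rescale so that $u=0$ is the trivial solution.

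To get many solutions, I would restrict to functions of the $S^1$ variable $s \in \R/L\mathbb{Z}$ only. The $\SU(2)$-invariance should kill the drift term, because $\theta_0$ is dual to $\partial_s$ and the Chern Laplacian's first-order term on such functions is either absent or a constant multiple of $u'$. If a first-order term survives, I would instead use functions on $\SU(2)$ invariant under a circle, or exploit a reflection symmetry $s \mapsto -s$ to show that even solutions persist. With the drift gone, the equation reduces to the ODE $u'' + a - b e^{u} = 0$ with $a,b>0$. This is a conservative Liouville-type ODE with energy $E = \tfrac12 (u')^2 + a u - b e^{u}$, and it has a center at $u_* = \log(a/b)$. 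Its periodic orbits have periods that range continuously from the linearized period $2\pi/\sqrt{a}$ up to $\infty$ as the energy goes to the separatrix. The Hopf surface comes in a family of circle lengths, and after the normalization we may take the standard one. Choosing the $k$-th solution to have minimal period $L/k$ requires the period to be in range; if $L < 2\pi/\sqrt{a}$, this must instead be arranged by period-multiplying. The more robust route is to pass to the covering $S^1$ of length $kL$: a solution on $\R/(L/m)$ lifts to $\R/L$, so we need solutions of period $L/m$ for every large $m$. Since periods only go up, I would instead use the $\SU(2)$ directions as well if necessary.

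The main obstacle is the period/spectrum bookkeeping: showing that the linearization $\Delta + b e^{u_*}$ at the trivial solution crosses infinitely many eigenvalues, or that the ODE's period range covers infinitely many admissible values with amplitude tending to infinity. I would attempt it as follows. The period function $P(E)$ of the Liouville ODE is monotone and tends to $\infty$ at the separatrix, where $\max|u|$ also tends to $\infty$. So it suffices that $L/j$ lies in $(2\pi/\sqrt a, \infty)$ for infinitely many admissible lengths. I would obtain these by replacing $g_{\mathrm{Hopf}}$ with its pullback under $k$-fold covers of $S^1$, which are isomorphic Hopf surfaces, and pulling back, via a hypercomplex identification, solutions of period $kL$. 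Because the orbits approach the separatrix, this yields $u_k$ with $\brs{u_k}_{C^0} \to \infty$. Smoothness is automatic from the ODE.
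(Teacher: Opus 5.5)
\textbf{The main route fails at its first step.} Restricting to functions of the $S^1$ variable does not remove the Lee-form drift; it is exactly the ansatz in which the drift survives.

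Realize the Hopf surface as a quotient of $\mathbb{H}\setminus\{0\}$, with $g_{\mathrm{Hopf}}$ a multiple of $|dx|^2/|x|^2=ds^2+g_{S^3}$ and $s=\log|x|$. Its Lee form is a nonzero multiple of $ds$, and its Chern Laplacian is a positive multiple of $|x|^2\Delta_{\mathrm{flat}}$. For $u=u(s)$ one computes $|x|^2\Delta_{\mathrm{flat}}u=u''+2u'$. So the equation \eqref{eqn:hkteinstein4d}, i.e.\ $\Delta_{\bar\omega}u=1-e^u$, becomes
\[
u''+2u'+\kappa\,(e^u-1)=0,\qquad \kappa>0 .
\]
This is a damped oscillator. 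The energy $E=\tfrac12(u')^2+\kappa(e^u-u-1)$ satisfies $E'=-2(u')^2$, so integrating over a period forces $u'\equiv 0$. The only solution depending on $s$ alone is therefore $u\equiv 0$.

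The reflection $s\mapsto -s$ reverses the sign of the drift, so it does not rescue anything. The drift vanishes for the opposite ansatz, $u$ independent of $s$, because $|x|^2\Delta_{\mathrm{flat}}$ acts on such functions as $\Delta_{S^3}$. Separately, your model ODE $u''+a-be^u=0$ has a saddle, not a center, at $\log(a/b)$: the sign of the nonlinearity is reversed.

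\textbf{The $S^1$ ansatz could not work even without the drift.} Take the drift-free model $u''+\kappa(e^u-1)=0$. On a circle of fixed length $L$ a solution needs minimal period $L/m$. The period is bounded below and tends to infinity only as the amplitude does, so only finitely many $m$ are possible and only orbits of bounded amplitude qualify. This is the Schoen-type mechanism on $S^1\times S^{n-1}$, where new solutions appear only as the circle gets longer.

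Passing to $k$-fold covers changes the problem. The cover is the Hopf surface $(\mathbb{H}\setminus\{0\})/\langle\mu^k\rangle$ with a circle $k$ times longer, not the given $(S^1\times\SU(2),I,J,K,g_{\mathrm{Hopf}})$. Solutions on it that are not deck-invariant do not descend. There is also no parameter on a fixed manifold along which to run a bifurcation argument.

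\textbf{The missing idea is a large-amplitude mechanism on the $\SU(2)$ factor.} The paper takes $u$ invariant under $S^1$ and under $\SO(3)$ acting on $\SU(2)\cong S^3$. This gives the Gelfand-type equation $u''+2\cot r\,u'=4(1-e^u)$ on $[0,\pi]$. Evenness across the equator reduces it to a Neumann condition at $r=\pi/2$, and one shoots from the pole with $u(0)=a$, $u'(0)=0$.

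For large $a$ the paper compares $u_a$ with the singular solution $-2\log\sin r-\log 2$ in the variable $t=\log\tan(r/2)$. There the linearization has complex exponents $-\tfrac12\pm i\tfrac{\sqrt7}{2}$. This yields $u_a'(\pi/2)=Ce^{-a/4}\cos(\tfrac{\sqrt7}{4}a+\delta)+o(e^{-a/4})$ with $C>0$. The oscillation gives zeros $a_k\to\infty$ of the shooting map, hence solutions with $\brs{u_k}_{C^0}\ge a_k\to\infty$.

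Your fallback, ``functions on $\SU(2)$ invariant under a circle'', points toward the right factor. But circle-invariance alone does not reduce the problem to an ODE, and the proposal has no substitute for this oscillation argument.
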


\noindent The manifolds $S^n \times S^1$ are a classic source of nonuniqueness for the Yamabe problem (cf. \cite{de2012bifurcation,kobayashi1985conformally,schoen2006variational}).  While similar in spirit, here we do not have a variational structure to work with and rather rely on a brute force ODE analysis of $S^1 \times \SO(3)$ invariant metrics. To the best of our knowledge, the HKT–Einstein metrics ${e^{u_k}g_{\mathrm{Hopf}}}$ constructed in Theorem \ref{t:4dnonuniquenessinto} provide the first examples in the literature of nonhomogeneous HKT–Einstein structures.

We also study strong HKT–Einstein metrics on compact Lie groups. In real dimension four, strong HKT–Einstein structures have been completely classified: when $\lambda\neq 0$, the unique example is the standard Hopf surface. In higher dimensions, however, the general theory remains largely unexplored. As a step toward its development, we obtain a complete classification of strong HKT–Einstein metrics on compact Lie groups.
\begin{thm} (cf. Theorem \ref{t:sHKTE})
Let $G$ be a non-trivial compact semisimple Lie group and let $(\mathbb{T}^\ell \times G, I,J,K,g)$ be any strong HKT manifold constructed as in Section \ref{s:strongHKTE}. Then $(I,J,K,g)$ is HKT-Einstein if and only if
\[
\mathbb{T}^\ell \times G \cong \bigl( S^1 \times \mathrm{SU}(2) \bigr) \times \dots \times \bigl( S^1 \times \mathrm{SU}(2) \bigr) \times \mathrm{SU}(3) \times \dots \times \mathrm{SU}(3).
\] 
\end{thm}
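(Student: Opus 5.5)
The plan is to reduce the HKT-Einstein condition to a purely Lie-algebraic condition on the Joyce-type construction of Section \ref{s:strongHKTE}, and then check it factor by factor. In that construction, $\mathbb{T}^\ell \times G$ carries a left-invariant hypercomplex structure built from a Samelson decomposition. One fixes a maximal torus and a sequence of $\mathfrak{su}(2)$-subalgebras $\mathfrak{d}_1,\dots,\mathfrak{d}_k$ attached to highest roots. The Lie algebra $\mathfrak{t}^\ell\oplus\mathfrak{g}$ then decomposes as $\mathfrak{b}\oplus\bigoplus_j \mathfrak{d}_j\oplus\bigoplus_j \mathfrak{f}_j$, where the $\mathfrak{f}_j$ are sums of $\mathfrak{d}_j$-doublets and $\mathfrak{b}$ is abelian. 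The metric $g$ is bi-invariant, so the Bismut connection is the flat $(-)$-connection with torsion $H=-g([\cdot,\cdot],\cdot)$, and $dH=0$ is automatic.

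\textbf{Step 1: reduce HKT-Einstein to a condition on $\theta$.} I will compute the HKT-Einstein condition for left-invariant data. Because the Bismut connection is flat, its Ricci form vanishes. The quaternionic Chern/Obata Ricci form then reduces to an expression in the Lee form $\theta$, essentially $d(I\theta)$ or $\partial_J$ of the canonical-bundle connection form $\alpha$, which is determined by $\theta$. On a Lie algebra $d(I\theta)(X,Y)=-I\theta([X,Y])$, so HKT-Einstein with constant $\lambda$ becomes
\[
\theta^\sharp \text{ lies in the centre modulo } \mathfrak{b}, \qquad -\,I\theta([X,Y]) = \lambda\, \Omega\text{-type pairing } g(IX,Y) \text{ on the nonabelian part}.
\]
Separately, I will compute $\theta = I\,\delta\omega$ for the bi-invariant metric. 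Unimodularity kills most terms, and $\theta(X)$ is the sum over the $\mathfrak{d}_j$ of the trace of $\mathrm{ad}$ contributions. Concretely, $\theta^\sharp$ is a combination of the generators $Iu_j\in\mathfrak{d}_j$ (or the torus partners $u_j$), with coefficients proportional to $\dim \mathfrak{f}_j$.

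\textbf{Step 2: compare the two sides block by block.} The Einstein equation must hold on each summand $\mathfrak{d}_j$ and on each $\mathfrak{f}_j$. On $\mathfrak{d}_j\cong\mathfrak{su}(2)$, together with its abelian partner, the structure is the Hopf one, and the equation fixes $\lambda$ in terms of the normalization of $g$ on $\mathfrak{d}_j$. On $\mathfrak{f}_j$, the bracket $[\mathfrak{f}_j,\mathfrak{f}_j]$ has components in $\mathfrak{d}_j$, in later $\mathfrak{d}_i$, and in $\mathfrak{b}$. The Einstein condition requires $I\theta$ evaluated on these brackets to be proportional to $g(I\cdot,\cdot)$, with the same $\lambda$ for every $j$. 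This forces two things:
\begin{itemize}
\item the brackets must land only where $\theta$ has the correct weight;
\item a numerical identity must hold between $\dim\mathfrak{f}_j$ and the root-length normalization.
\end{itemize}

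\textbf{Step 3: the classification, which I expect to be the main obstacle.} I will run through the Samelson decompositions of all simple compact Lie algebras and check the numerical condition. The sign and normalization bookkeeping in these decompositions is the hard part. For $\mathfrak{su}(2)$ we have $\mathfrak{f}=0$ and the condition holds, which is the Hopf case. For $\mathfrak{su}(3)$, $\mathfrak{f}$ is a single doublet pair of real dimension $4$ with bracket into $\mathfrak{d}$ and the torus, and the identity should hold. For $\mathfrak{su}(m)$ with $m\ge4$, and for $\mathfrak{so}$, $\mathfrak{sp}$ and the exceptional algebras, I expect $\mathfrak{f}_1$ to have dimension $4(h^\vee-2)$. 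In that case $\theta$ acquires contributions from several $\mathfrak{d}_j$, and the component of the equation on $[\mathfrak{f}_1,\mathfrak{f}_1]\cap\mathfrak{d}_2$ (or the mismatch of weights) fails. Mixing with the torus factor cannot repair this, since the torus directions only pair with the $\mathfrak{d}_j$.

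Finally, the bi-invariant metric splits over simple factors, so the product of HKT-Einstein factors with a common $\lambda$ (after scaling) is HKT-Einstein. This gives the stated product form, and the converse direction follows from the block-by-block necessity in Step 2.
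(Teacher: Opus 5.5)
There is a genuine gap. The proposal never isolates the condition that decides the classification, and the formulas you would feed into Step~3 are wrong in ways that change the answer.

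\textbf{The reduction in Step~1.} Your displayed equation is $\rho=\lambda\omega$, not $\Phi(g)=\tfrac12(\rho-J\rho)=\lambda\omega$. Write $\theta=\sum_j c_j\,g(e_1^j,\cdot)$, so that $I\theta=\sum_j c_j\,g(e_2^j,\cdot)$. The correct left-invariant equation is then
\[
-\tfrac12\sum_j c_j\, g\bigl(e_2^j,[X,Y]-[JX,JY]\bigr)=\lambda\,\omega(X,Y).
\]
The $J$-term is indispensable. By (J1) we have $[e_1^j,e_2^j]=0$, so $\rho(e_1^j,e_2^j)=0$ while $\omega(e_1^j,e_2^j)\neq0$; your version therefore already fails for $S^1\times\mathrm{SU}(2)$. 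With the projection, $[Je_1^j,Je_2^j]=-[e_3^j,e_4^j]=-2e_2^j$, and this gives exactly $\lambda=c_j$.

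\textbf{The Lee form.} The coefficients are not proportional to $\dim\mathfrak f_j$. By Proposition~\ref{p:hkteinstein}, $c_j=\tfrac{(\alpha_j,\alpha_j)}{2}(1+\dim_{\mathbb H}\mathfrak f_j)$. The $1$ (the contribution of $\mathfrak d_j$ itself) and the root-length factor are precisely what the final numerical test compares. With your formula the Hopf surface would be balanced.

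\textbf{The predicted obstruction does not exist.} Take $X,Y\in\mathfrak f_1$ and $i\geq2$. Then $\mathrm{ad}(e_2^i)$ preserves $\mathfrak f_1$, since $\mathfrak d_i\subset\mathfrak c_1$, and it commutes with $J|_{\mathfrak f_1}=\mathrm{ad}(e_3^1)$ by (J2). Ad-invariance then gives $g(e_2^i,[X,Y]-[JX,JY])=g([e_2^i,X],Y)-g(J[e_2^i,X],JY)=0$. So the $\mathfrak d_2$-components of $[\mathfrak f_1,\mathfrak f_1]$ cancel, and your Step~3 mechanism would find nothing. Checking the remaining blocks the same way, the whole HKT-Einstein system collapses to $\lambda=c_j$ for $j=1,\dots,m$, i.e. to \eqref{eqn:HKT_Einstein}. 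The paper obtains this by evaluating on $(e_1^j,e_2^j)$.

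\textbf{What Step~3 still needs.} The classification is a root-system question whose inputs your Step~3 only anticipates:
\begin{enumerate}
\item $\mathfrak f_1\neq0$ unless $G=\mathrm{SU}(2)$. Your identity $\dim_{\mathbb H}\mathfrak f_1=h^\vee-2$ does give this.
\item Some $\mathfrak f_j$ vanishes unless $G=\mathrm{SU}(2k+1)$.
\item For $\mathrm{SU}(2k+1)$ with $k\geq2$, all $\alpha_j$ are long and $\dim\mathfrak f_j$ strictly decreases.
\item In the non-simply-laced cases one must track which $\alpha_j$ are long. For $B_n$, $\alpha_1$ and $\alpha_2$ are long and $\mathfrak f_2=0$. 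For $G_2$ the comparison gives $3\neq\tfrac13$.
\end{enumerate}
With the correct reduction, $\mathrm{SU}(3)$ (which has one layer) and $S^1\times\mathrm{SU}(2)$ are automatically HKT-Einstein, so your ``the identity should hold'' becomes immediate.

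Your splitting over simple factors matches the paper. Your remark that the factors must be scaled to a common $\lambda$ is a correct refinement of the statement.
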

The proof of the theorem is algebraic and relies on the observation that the HKT–Einstein condition can be recast as an equation expressed entirely in terms of the Joyce decomposition associated with the compact semisimple Lie group $G$ (see Equation~\ref{eqn:HKT_Einstein}). We first treat the case in which $G$ is simple through a case-by-case analysis and then extend the argument to arbitrary compact semisimple Lie groups. 

\medskip

\noindent \textbf{Acknowledgments:} The authors thank Mehdi Lejmi and Luigi Vezzoni for helpful comments on an earlier draft of this paper.  We further thank Daniele Angella who pointed out the construction behind the metrics of Theorem \ref{t:4dnonuniquenessinto}, which was suggested by ChatGPT.  Through further consultation with ChatGPT we developed a complete proof.  The authors take full responsibility for the contents of this paper.

\section{HKT geometry} \label{conv}

To set the stage, we first recall the fundamental aspects of hypercomplex and hyperhermitian geometry, referring the reader to \cite{HowePap, grantcharov2000} for a more detailed exposition. With our conventions established, we then proceed to define the key notions of HKT-Einstein geometry and the first quaternionic Bott-Chern class.

\subsection{Conventions}

Let $(M^{2n}, I, g)$ be a Hermitian manifold. We denote by $\nabla$ the Levi‑Civita connection of $g$.

\begin{enumerate} [label={(\arabic*)}]
    \item The fundamental $2$-form is $\omega(X,Y)=g(X,IY)$.
    \item The complex structure $I$ acts on $k$-forms by
    \[
    (I\alpha)(X_1,\dots,X_k)=(-1)^k\alpha(IX_1,\dots,IX_k).
    \]
    \item The \emph{Bismut connection} $\nabla^B$ is the unique Hermitian connection with totally skew‑symmetric torsion $H$. Its torsion is given by $H = d^c\omega = I d\omega$, and the connection is
    \[
    g(\nabla^B_X Y, Z)=g(\nabla_X Y, Z)+\tfrac{1}{2}H(X,Y,Z).
    \]
    \item The curvature tensor $R^B$ of $\nabla^B$  lies in $\Omega^2(M) \otimes \Omega^{1,1}(M)$, i.e.,  $R^B(X,Y,Z,W)=-R^B(Y,X,Z,W)=R^B(X,Y,IZ,IW)$.
    \item The first and second Bismut Ricci curvatures are
    \[
    \rho^B(X,Y)=\tfrac12\sum_{i=1}^{2n} R^B(X,Y,e_i,Ie_i),\qquad
    S^B(X,Y)=\tfrac12\sum_{i=1}^{2n} R^B(e_i,Ie_i,X,Y),
    \]
    for any orthonormal frame $\{e_i\}$.
    \item The \emph{Lee form} $\theta$ is defined by $d\omega^{n-1}=\theta\wedge\omega^{n-1}$. It can be expressed via the torsion as
    \[
    \theta(X)=-\tfrac12\sum_{i=1}^{2n} H(IX,e_i,Ie_i).
    \]
    The Hermitian metric $(g,I)$ is Gauduchon if $d^*\theta=0$.
    \item The \emph{Chern connection} $\nabla^C$ is the unique Hermitian connection whose torsion $T$ satisfies $T(IX,Y)=T(X,IY)$. Its torsion is
    \begin{equation} \label{eqn:chern_torsion}
      T(X,Y,Z)=-\tfrac12\bigl(H(X,IY,IZ)+H(IX,Y,IZ)\bigr).  
    \end{equation}
 
    \item The curvature $R^C$ of $\nabla^C$ lies in $\bigwedge^{1,1}T^*M \otimes \bigwedge^{1,1}T^*M$.
    \item The first and second Chern Ricci curvatures are
    \[
    \rho(X,Y)=\tfrac12\sum_{i=1}^{2n} R^C(X,Y,e_i,Ie_i),\qquad
    S(X,Y)=\tfrac12\sum_{i=1}^{2n} R^C(e_i,Ie_i,X,Y).
    \]
    \item Below we will use the tensor
\[
\lambda^{\omega} (X,Y)
:=
\sum_{i=1}^{2n} dH(X,Y,e_i,Ie_i)=\sum_{i=1}^{2n} dH(e_i,Ie_i,X,Y).
\]
\end{enumerate}

Also we record a lemma on the Lee form coming from \cite{Alexandrov}, matched to our conventions.
\begin{lem} \label{l:trdItheta}
Let $(g,I)$ be a Hermitian structure. Then
\[
 \sum_{i=1}^{2n} dI\theta (Ie_i,e_i)= 2 d^*\theta+ 2 |\theta|^2.
\]
\end{lem}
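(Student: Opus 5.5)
We expand the left-hand side with the Levi-Civita connection and reduce it to two pieces: a divergence term, which gives $2d^*\theta$, and a term built from $\nabla I$, which the torsion formula for $\theta$ in item (6) turns into $2|\theta|^2$. By the convention in (2), the $1$-form $I\theta$ is $(I\theta)(X)=-\theta(IX)$. Since $\nabla$ is torsion free,
\[
dI\theta(X,Y)=(\nabla_X I\theta)(Y)-(\nabla_Y I\theta)(X),
\qquad
(\nabla_X I\theta)(Y)=-(\nabla_X\theta)(IY)-\theta\bigl((\nabla_X I)Y\bigr).
\]
We substitute this into $\sum_i dI\theta(Ie_i,e_i)$ and split it into the terms involving $\nabla\theta$ and the terms involving $\nabla I$.

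\textbf{The $\nabla\theta$ terms.} From the first slot we get $-\sum_i(\nabla_{Ie_i}\theta)(I^2e_i)=\sum_i(\nabla_{Ie_i}\theta)(Ie_i)$. From the second slot we get $-(\nabla_{e_i}I\theta)(Ie_i)$, whose $\nabla\theta$ part is $(\nabla_{e_i}\theta)(I^2e_i)=-\sum_i(\nabla_{e_i}\theta)(e_i)$. Because $\{Ie_i\}$ is again an orthonormal frame, $\sum_i(\nabla_{Ie_i}\theta)(Ie_i)=\sum_i(\nabla_{e_i}\theta)(e_i)$. With the convention $d^*\theta=-\sum_i(\nabla_{e_i}\theta)(e_i)$, these two contributions should together equal $2d^*\theta$. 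The relative sign is exactly what must be double checked here; if the two pieces cancel instead, the sign in the formula for $(\nabla_X I\theta)(Y)$ has to be recomputed.

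\textbf{The $\nabla I$ terms.} What remains is
\[
-\sum_i\theta\bigl((\nabla_{Ie_i}I)e_i\bigr)+\sum_i\theta\bigl((\nabla_{e_i}I)Ie_i\bigr).
\]
Set $V:=\sum_i(\nabla_{e_i}I)e_i$.
\begin{itemize}
\item $\nabla_X I$ anticommutes with $I$, so $(\nabla_{e_i}I)Ie_i=-I(\nabla_{e_i}I)e_i$. Hence the second sum is $-\theta(IV)$.
\item Replacing the frame by $\{Ie_i\}$ and again using anticommutation, $\sum_i(\nabla_{Ie_i}I)e_i=IV$. Hence the first sum is also $-\theta(IV)$.
\end{itemize}
The remainder is therefore $-2\theta(IV)$. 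It then suffices to show $V=I\theta^\sharp$, since then $-2\theta(I^2\theta^\sharp)=2|\theta|^2$.

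\textbf{The main obstacle: computing $V$.} To find $V$ we use that $\nabla^B I=0$. With $g(\nabla^B_XY,Z)=g(\nabla_XY,Z)+\tfrac12H(X,Y,Z)$, this gives
\[
g\bigl((\nabla_XI)Y,Z\bigr)=-\tfrac12 H(X,IY,Z)-\tfrac12H(X,Y,IZ).
\]
Tracing with $X=Y=e_i$, the first term yields $\sum_iH(e_i,Ie_i,Z)$ and the second vanishes by skew-symmetry of $H$. Comparing with the formula in item (6),
\[
\theta(X)=-\tfrac12\sum_i H(IX,e_i,Ie_i),
\]
we find $g(V,Z)=\pm\theta(IZ)$, which means $V=\pm I\theta^\sharp$. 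The whole proof rests on this sign, and it must be tracked carefully against the conventions (1)–(6). As a sanity check, apply the result to a conformally Kähler metric $g=e^{f}g_K$. There $\theta$ is a multiple of $df$, and both sides can be computed directly, which fixes the constants; in particular it confirms the factor $2$ in $2|\theta|^2$ and the sign of $V$.
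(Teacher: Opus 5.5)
Your route is sound and is in substance the paper's computation. As written, however, it does not yet prove the lemma. The statement is purely a sign-and-constant identity, and you leave both decisive signs open, one of them with an actual error.

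\textbf{The $\nabla\theta$ terms.} The first slot is $(\nabla_{Ie_i}I\theta)(e_i)$, that is, $X=Ie_i$ and $Y=e_i$. Your formula for $(\nabla_X I\theta)(Y)$ is correct, and it gives $-(\nabla_{Ie_i}\theta)(Ie_i)$. You instead wrote $-(\nabla_{Ie_i}\theta)(I^2e_i)$, which amounts to substituting $Y=Ie_i$. You then equated this with $(\nabla_{Ie_i}\theta)(Ie_i)$, although it actually equals $(\nabla_{Ie_i}\theta)(e_i)$. With your expressions the two contributions cancel, and the argument would output $0$ instead of $2d^*\theta$. With the correct substitution each contribution is $-\sum_i(\nabla_{e_i}\theta)(e_i)=d^*\theta$, so their sum is $2d^*\theta$. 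The formula for $(\nabla_X I\theta)(Y)$ itself needs no change; only the substitution was wrong.

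\textbf{The sign of $V$.} This follows directly from the conventions, without the conformally K\"ahler check (which you do not carry out anyway). Setting $X=Y=e_i$ in your formula for $\nabla I$ gives $g(V,Z)=-\tfrac12\sum_i H(e_i,Ie_i,Z)$; the factor $-\tfrac12$ matters here. Item (6) at $X=IZ$, together with cyclic symmetry of $H$, gives $\theta(IZ)=\tfrac12\sum_i H(Z,e_i,Ie_i)=\tfrac12\sum_i H(e_i,Ie_i,Z)$. Hence
$$g(V,Z)=-\theta(IZ)=-g(\theta^\sharp,IZ)=g(I\theta^\sharp,Z),$$
so $V=I\theta^\sharp$. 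The $\nabla I$ terms then give $-2\theta(IV)=2|\theta|^2$.

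With these two fixes your argument is complete. The paper organizes the same computation slightly differently. It uses the invariant formula for $dI\theta$ and rewrites $[Ie_i,e_i]$ with $\nabla^B$, which preserves $I$. The torsion therefore enters through the bracket term $I\theta(H(Ie_i,e_i))=-H(e_i,Ie_i,I\theta^\sharp)$ rather than through $\nabla I$. Otherwise the content is identical.
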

\begin{proof}
The proof is a computation:
\begin{align*}
\sum_{i=1}^{2n} dI\theta (Ie_i,e_i)&= \sum_{i=1}^{2n} Ie_i(I\theta(e_i))- e_i(I\theta(Ie_i))-I\theta([Ie_i,e_i])\\
&=  \sum_{i=1}^{2n}  -2\,  e_i (\theta (e_i) )+2 \theta(\nabla{e_{i}} e_i )+I\theta(H(Ie_i,e_i))\\
&= 2 d^*\theta - H (e_i, Ie_i, I \theta^\sharp)\\
& =2 \left( d^*\theta+  |\theta|^2 \right).
\end{align*}
\end{proof}

\subsection{Hypercomplex manifolds}

A hypercomplex manifold $M$ is a smooth $4n$-dimensional manifold equipped with a triple of complex structures $I$, $J$, $K$ satisfying the unitary quaternionic relations:
\[
IJ = -JI = K,\qquad JK = -KJ = I,\qquad KI = -IK = J.
\]
In the following, we will refer to $n$ as the \emph{quaternionic dimension of} $M$.
Note that on a hypercomplex manifold, there actually exists a whole $2$-sphere of complex structures:
\[
S^2 = \{ aI + bJ + cK \mid a^2 + b^2 + c^2 = 1 \},
\]
and in the following let $L$ denote a generic element of $S^2$.

\par
\medskip

Given a hypercomplex manifold $(M,I,J,K)$, there exists a unique torsion‑free connection, called the \emph{Obata connection} and denoted by $\nabla^{\mathrm{Ob}}$, satisfying $\nabla^{\mathrm{Ob}}L=0$ for every $L\in S^2$. By the holonomy principle, the holonomy group of $\nabla^{\mathrm{Ob}}$ is contained in $\mathrm{GL}(n,\mathbb{H})$.  The possible irreducible holonomy groups of torsion-free connections contained in $\mathrm{GL}(n,\mathbb{H})$ are severely restricted. Indeed, among the groups appearing in the classification of \cite{MS}, only
\[
\mathrm{Sp}(n),\qquad \mathrm{SL}(n,\mathbb{H}),\qquad \mathrm{GL}(n,\mathbb{H})
\]
occur, where $\mathrm{SL}(n,\mathbb{H})$ denotes the commutator subgroup of $\mathrm{GL}(n,\mathbb{H})$.  The geometric significance of the first two cases is well understood. If $(M,I,J,K)$ admits a hyperk\"ahler metric, then the Obata connection agrees with the Levi--Civita connection of that metric, and consequently its holonomy is contained in $\mathrm{Sp}(n)$. On the other hand, the holonomy of $\nabla^{\mathrm{Ob}}$ is contained in $\mathrm{SL}(n,\mathbb{H})$ if and only if the canonical bundle of $(M,I)$ admits a nonzero section that is parallel with respect to $\nabla^{\mathrm{Ob}}$.

\subsection{Differential operators}

Let $d=\partial+\bar\partial$ denote the decomposition of the exterior differential with respect to the complex structure $I$. Following \cite{Ver02}, we introduce the operator
\[
\partial_J:=(-1)^k J\bar\partial J
\]
acting on $k$-forms. This operator plays the role, in the hypercomplex setting, of the twisted differential $d_I^c$ in complex geometry. It is straightforward to verify that
\[
\partial_J^2=0,
\qquad
\partial\,\partial_J+\partial_J\,\partial=0.
\]

The operators $\partial$ and $\partial_J$ give rise to the quaternionic Bott--Chern cohomology introduced in \cite{GLV},
\[
H^{\bullet,\bullet}_{qBC}(M)
=
\frac{\ker\partial\cap\ker\partial_J}
     {\operatorname{Im}(\partial\partial_J)}.
\]
We denote the corresponding cohomology class of a form by $[-]_{qBC}$. When $M$ is compact, the spaces $H^{\bullet,\bullet}_{qBC}(M)$ are finite-dimensional \cite[Theorem 5.1]{GLV}.

\subsection{Hyperhermitian metrics}

A Riemannian metric $g$ that is Hermitian with respect to $I$, $J$, and $K$ (and hence with respect to every $L\in S^2$) is called a \emph{hyperHermitian metric}. 
Such a hyperHermitian metric defines K\"ahler forms with respect to any of the compatible complex structures 
\[
\omega_L(X,Y):=g(X,LY), \qquad L\in S^2.
\]
Throughout the paper we shall mainly work with the complex structure $I$ and the corresponding K\"ahler form
$\omega_I$. Whenever it is written $\omega$ without the corresponding complex structure, we always mean that it is with respect to $I$.  A direct computation shows that $\omega$ belongs to the space
\[
A^{1,1}
=
\{\alpha\in\Omega^2(M)\mid I\alpha=\alpha,\;J\alpha=-\alpha,\;K\alpha=-\alpha\}
=
\Omega^{1,1}_I
\cap
\Omega^{2,0+0,2}_J
\cap
\Omega^{2,0+0,2}_K.
\]
Following \cite{salamon1986,banos2004}, elements of $A^{1,1}$ will be referred to as \emph{Salamon $2$-forms}.

Given $\alpha\in A^{1,1}$, the tensor
\[
g(X,Y):=\alpha(IX,Y)
\]
is symmetric, and it defines a hyperHermitian metric precisely when $\alpha$ is positive. Moreover, setting
\[
\alpha_J(X,Y):=-\alpha(KX,Y),
\qquad
\alpha_K(X,Y):=\alpha(JX,Y),
\]
one has
\[
\alpha_J,\alpha_K\in\Omega^{(2,0)+(0,2)}_I(M).
\]
\medskip \par 
Associated with a hyperHermitian metric we define an element of $\Omega^{2,0}_I$ via
\[
\Omega:=\frac{\omega_J+\sqrt{-1}\,\omega_K}{2},
\]
which precisely corresponds to 
\[
\Omega(X,Y):=\frac{-\omega(KX,Y)+\sqrt{-1} \omega(JX,Y)}{2}.
\]
The form $\Omega$ is nondegenerate and satisfies
\begin{equation}\label{eqn:Omega}
J\bar\Omega=\Omega,
\qquad
\Omega(JX,X)>0 \quad \forall\,X\neq 0,
\qquad
\frac{\Omega^n\wedge\bar\Omega^n}{n!}
=
\frac{\omega^{2n}}{(2n)!}.
\end{equation}
Conversely, a $(2,0)$-form $\Omega\in\Omega_I^{2,0}$ satisfying the first two conditions in \eqref{eqn:Omega} uniquely determines a hyperHermitian metric.  This observation motivates the following notion of positivity.

\begin{defn}\label{dfn:positive}
A form $\alpha\in\Omega_I^{2,0}(M)$ is called \emph{quaternionic positive} if
\[
J\bar\alpha=\alpha,
\qquad
\alpha(JX,X)>0
\]
for every nonzero vector $X\in TM$.
We write $\alpha>0$ whenever $\alpha$ is quaternionic positive.
\end{defn}
\begin{rmk}
Given a real valued function $\phi$, the operator $\del\del_J \phi$ satisfies
\[
J \overline{\del\del_J \phi}=J \bar\partial J \partial \phi= -\partial_J \partial \phi= \partial \partial_J \phi.
\]
In the light of definition \ref{dfn:positive}, we say that $\del\del_J \phi$ is quaternionic positive when 
\[
\del\del_J \phi(JX,X)>0.
\]
\end{rmk}
\noindent Finally, for $\alpha\in\Omega_I^{2,0}(M)$ we define its trace with respect to $\Omega$ by
\begin{equation}\label{eqn:tr_Omega}
\tr_\Omega\alpha
:=
n\,\frac{\alpha\wedge\Omega^{n-1}}{\Omega^n}.
\end{equation}
\begin{rmk} \label{rmk:standardcoordinates}
Let $(M,I,J,K,g)$ be a hyperHermitian manifold and let $p$ be a preferred point. We fix $I$-holomorphic coordinates $\{x_{2i}, x_{2i+1},y_{2i},y_{2i+1}\}_{i=0}^{n-1}$ such that $I \partial_{x_{i}}=\partial_{y_{i}}$. Let $\partial_{z_{i}}=\frac{\partial_{x_{i}}-\sqrt-1 \partial_{y_{i}}} {\sqrt2}$ and $dz^i= \frac{d{x^{i}}+\i d{y^{i}}} {\sqrt2}$. At the point of interest $p$ we may assume that $g$ is the identity and $J\partial_{z_{2i}}=\partial_{z_{\bar{2i+1}}}$. Such coordinates will be called \emph{standard}. Accordingly 
\[
\omega= \i \sum_{i=0}^{2n-1}dz^{\bar i i}, \ \ \Omega=\sum_{i=0}^{n-1}dz^{2i+1}\wedge dz^{2i}.
\]
Let $f$ be a smooth function. We define the Chern Laplacian 
\begin{equation} \label{eqn:chern_laplacian}
    \Delta_{\omega} f:= -2n \frac{\i \partial \bar \partial f \wedge \omega^{2n-1}}{\omega^{2n}}.
\end{equation}
It follows that at $p$, 
\[
\Delta_{\omega}f=\sum_{i=0}^{2n-1} f_{i\bar i}=\sum_{i=0}^{2n-1} \partial \bar \partial f(\partial_{z_{i}},\partial_{z_{\bar{i}}}).
\]
The choice of the minus sign in \eqref{eqn:chern_laplacian} is to guarantee that at the maximum (resp. minimum) of $f$, $\Delta_{\omega}f \le 0$ ($\ge 0$). 
In terms of the real coordinates, at the preferred point $p$ we have
\[
\Delta_{\omega}f=\tfrac12 \sum_{i=0}^{2n-1} dd^cf (\partial x_i, \partial y_i)=\tfrac12 \sum_{i=0}^{2n-1} dd^cf (\partial x_i, I\partial x_i). 
\]
Furthermore, using that $\partial\partial_J+\partial_J\partial=0$, 
\[
0=\partial J \bar \partial z^i-J \bar \partial J \partial z^i=-J \bar \partial J \partial z^i,
\]
we immediately obtain 
\begin{equation} \label{eqn:propertiesofJ}
    \bar \partial J  dz^i=0, \ \ \,  \partial J dz^{\bar{i}}=0. 
\end{equation}
This allow us to easily compute $\partial \partial_J f$, and a straightforward computation at the point $p$ shows that 
\[
\tr_\Omega \partial \partial_J f= -\Delta_{\omega}f.
\]
Therefore, we have the following equalities: for any function $f$
\[
\Delta_{\omega}f=\tfrac{1}{2} \sum_{i=0}^{2n-1}dd^c f (e_i,Ie_i)=-\tr_{\Omega}(\partial \partial_J f),
\]
where $\{e_i, Ie_i\}$ is any local orthonormal frame. 
\end{rmk}

\subsection{HKT metrics}

By now there are several equivalent definitions of a hyperK\"ahler with torsion (HKT) metric, which we state collectively:

\begin{defn} \label{defn:HKT}
Let $(M,I,J,K,g)$ be an hyperHermitian structure. We say that $(I,J,K,g)$ is \emph{hyperK\"ahler with torsion (HKT)} if one of the following equivalent conditions holds: 
\begin{enumerate} [label={(\arabic*)}]
    \item The form $\omega$ satisfies
\begin{equation} \label{eqn:Sal_diff}
d \omega \in \bigcap\limits_{L \in S^2} \left( \Omega^{2,1}_L(M) \oplus \Omega^{1,2}_L (M)\right).
\end{equation}
\item The Bismut connections associated with $(I,g)$, $(J,g)$, and $(K,g)$ coincide.
\item The HKT form $\Omega$ satisfies
\begin{align*}
    \partial \Omega = 0.
\end{align*}
\end{enumerate}
\end{defn}
\noindent Condition \eqref{eqn:Sal_diff} is equivalent to the vanishing of the Salamon differential of $\omega$ \cite{salamon1986,banos2004}. Although this formulation is not particularly convenient in practice, it is enough to verify that
\[
d\omega \in \bigcap_{L \in \{I,J,K\}} \Omega^{(2,1)+(1,2)}_L(M).
\]
While this criterion is often used implicitly in the literature, we include a proof in Lemma~\ref{l:3formlinearalgebra} as we did not find an explicit reference.  The second characterization is the one most commonly adopted in the literature (see, for instance, \cite{HowePap,grantcharov2000}), whereas the equivalence with the third was established in \cite{grantcharov2000}.

\medskip

Let $(I,J,K,g)$ be an HKT manifold, and let $\omega$, $\omega_J$, and $\omega_K$ denote the fundamental $2$-forms associated with the Hermitian structures $(I,g)$, $(J,g)$, and $(K,g)$. By Definition~\ref{defn:HKT}, the corresponding Bismut connections coincide. Equivalently, their torsion $3$-forms agree:
\[
d_I^c\omega
=
d_J^c\omega_J
=
d_K^c\omega_K.
\]
We shall denote this common torsion form by
\[
H:=d_I^c\omega=d_J^c\omega_J=d_K^c\omega_K.
\]

It is well known that $H$ is of type $(2,1)+(1,2)$ with respect to each of the complex structures $I$, $J$, and $K$. Furthermore, $dH$ is of type $(2,2)$ with respect to all three complex structures.

\medskip

We observe that HKT structures enjoy several remarkable properties:
\begin{enumerate} [label={(\arabic*)}]
    \item The holonomy group of the Bismut connection is contained in $\Sp(n)$;

    \item The Bismut Ricci form vanishes. This follows immediately from the previous item;

    \item The Lee forms of the Hermitian structures $(g,I)$, $(g,J)$, and $(g,K)$ coincide \cite{Ivanovstring}. We denote the common Lee form by $\theta$;

    \item For every $L\in\{I,J,K\}$, the Chern Ricci form of the Hermitian structure $(g,L)$ satisfies
    \begin{equation} \label{eqn:rhoChern}
        \rho_L=d(L\theta).
    \end{equation}
    With $\rho$ without any subscript we always mean $\rho_I$.
\item The Chern scalar curvatures satisfy
\(s_I=s_J=s_K\), so that we may unambiguously refer to the Chern scalar curvature \(s\). Indeed, this follows since $s_L =\tfrac 12 \sum_{i=1}^{4n} \rho_L(Le_i,e_i)$ and by Lemma \ref{l:trdItheta}
\begin{equation} \label{eqn:schern}
   s_L = \tfrac{1}{2}\sum_{i=1}^{4n} dL\theta(Le_i,e_i)
       =  d^*\theta+  |\theta|^2, 
\end{equation}
which is independent of the choice of complex structure
\(L\in\{I,J,K\}\). Furthermore when $M$ is compact by the formula above
\[
\int_M s \, dV \ge 0,
\]
and it is zero if and only if $\theta=0$.
    \item The curvature tensor $R^B \in \Omega^2(M) \otimes (\Omega_I^{1,1}(M) \cap \Omega_J^{1,1}(M) \cap \Omega_K^{1,1}(M))$, that is, 
    \[ R^B(X,Y,Z,W)=-R^B(Y,X,Z,W)=R^B(X,Y,LZ,LW),\]
    for any $L \in \{I,J,K\}$.
    \item From the previous item, if we consider the second Bismut Ricci curvature $S^B$ of $(g,I)$, then $S^B$ is $(1,1)$ with respect to $I,J,K$. \label{item:6}
    \item The $(2,0)$ form $\Omega$ defines a class in $H^{2,0}_{qBC}(M)$. By definition \ref{defn:HKT}, 
    \[
    \partial \Omega=0, \ \ \partial_J \Omega=J \bar \partial J \Omega=J \bar \partial \bar \Omega=0,
    \]
    where we used that $J\bar \Omega=\Omega$.
\end{enumerate}

\subsection{HKT-Einstein metrics} \label{ss:HKTE}

The search for canonical metrics in HKT geometry naturally leads to an analogue of the Kähler–Einstein condition. Following a suggestion of Verbitsky (cf. \cite{fusi2026}), an HKT metric is called HKT–Einstein when the the $J$-anti-invariant component of the first Chern–Ricci form is proportional to the fundamental form. \par
Given $g$ a hyperHermitian metric consider the $2$-form
\begin{equation} \label{f:phi}
    \Phi(g) := \frac{\rho - J\rho}{2} \in A^{1,1}.
\end{equation}

\begin{defn} \label{defn:HKT_Einst} We say that an HKT structure $(I, J, K,g)$ is \emph{HKT-Einstein} if there exists $\lambda \in \mathbb R$ such that
\begin{align*}
    \Phi(g) = \lambda \omega.
\end{align*}
The constant $\lambda$ will be called HKT-Einstein constant. 
\end{defn}
Tracing both sides of this equation with $\omega$ yields
\begin{equation} \label{eqn:chern_scalar}
    s=2n \lambda.
\end{equation} 
However, by \cite[Lemma 2.10]{fusi2026}, when $M$ is compact, $s$ must be non-negative, implying that the same property holds for the HKT-Einstein constant $\lambda$. 

\subsection{First quaternionic Bott-Chern class}

Let $(M,I,J,K,g)$ be a hyperHermitian manifold, and consider the $n$-form $\bar{\Omega}^n \in \Omega^{(0,2n)}_I$. We define the $(1,0)$-form $\alpha$ by
\begin{align} \label{f:alphadef}
\partial \bar\Omega^n = \alpha \wedge \bar{\Omega}^n.
\end{align}
It is well known that $\alpha + \bar\alpha$ is the connection $1$-form of the Obata connection on the canonical bundle of $I$.  By \cite[Section 10.01]{Ver02} $\alpha$ is $\partial$-closed, and hence $\partial \partial_J$-closed and $J\overline{\del_J \alpha}=\partial_J\alpha$, that is, $\del_J \alpha$ satisfies the first condition of definition \ref{dfn:positive}. 

Furthermore, when the hyperHermitian structure is HKT, $\alpha$ is the $(1,0)$-part of the Lee form, that is,  
\begin{equation} \label{eqn:leeform}
    \theta=\alpha + \bar \alpha. 
\end{equation}
\noindent We now describe $\del_J \alpha$ in terms of classical curvatures.  
It follows from \cite[Proposition~2.6(c)]{fusi2026}\footnote{Note that here there is a minus sign, which arises from the different convention used for the action of the complex structure on differential forms.} that 
\begin{equation} \label{eqn:del_Jalpha}
\Phi(g) := \frac{\i \Phi(g)(JX, Y) -\Phi(g)(KX, Y)}{2}  =- \partial_J \alpha.
\end{equation}
Furthermore observe using
\eqref{eqn:del_Jalpha} that
\begin{equation}\label{eqn:cherscalar}
\operatorname{tr}_{\Omega}(\partial_J\alpha)= -\tfrac 12 s.    
\end{equation}

From the discussion above and equation \eqref{eqn:del_Jalpha}, it is natural to consider the object $\del_J \alpha$ as a natural analogue of Ricci curvature in the HKT setting.  It turns out that this object gives a well-defined element of quaternionic Bott-Chern cohomology.  To see this we first observe an elementary transgression formula for $\alpha$ in the lemma below, where the notation $\alpha_{\Xi}$ denotes the quantity defined by (\ref{f:alphadef}) for a form $\Xi \in \Omega^{(2n,0)}$.
\begin{lem} \label{l:transgression} 
Given $(M,I,J,K,g)$ a hyperHermitian manifold, $\Xi \in \Omega^{2n,0}_I$ nowhere vanishing and $f \in C^{\infty}(M)$, 
\begin{align*}
    \alpha_{e^{f} \Xi} = \alpha_{\Xi} + \, \del f.
\end{align*}
\end{lem}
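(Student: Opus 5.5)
The plan is to compute directly from the defining relation \eqref{f:alphadef}, applied to the form $\Xi$ rather than to $\bar\Omega^n$. I interpret $\alpha_\Xi$ as the $(1,0)$-form determined by
\[
\partial \bar\Xi = \alpha_\Xi \wedge \bar\Xi ,
\]
where $\bar\Xi \in \Omega^{0,2n}_I$ is nowhere vanishing. This form exists and is unique: $\bar\Xi$ trivializes $\Lambda^{0,2n}_I$, and $\partial\bar\Xi$ is a $(1,2n)$-form. Any $(1,2n)$-form can be written uniquely as $\beta\wedge\bar\Xi$ with $\beta\in\Omega^{1,0}$, since wedging with $\bar\Xi$ identifies $\Lambda^{1,0}$ with $\Lambda^{1,2n}$ pointwise.

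For the rescaled form, $f$ is real, so $\overline{e^f\Xi} = e^f\bar\Xi$. The Leibniz rule then gives
\[
\partial(e^f\bar\Xi) = e^f\,\partial f\wedge\bar\Xi + e^f\,\partial\bar\Xi = e^f\bigl(\partial f + \alpha_\Xi\bigr)\wedge\bar\Xi = \bigl(\partial f + \alpha_\Xi\bigr)\wedge \bigl(e^f\bar\Xi\bigr).
\]
By the uniqueness noted above, applied to $e^f\bar\Xi$, we conclude $\alpha_{e^f\Xi} = \alpha_\Xi + \partial f$.

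The only step needing care is the uniqueness, i.e.\ pointwise injectivity of $\beta \mapsto \beta\wedge\bar\Xi$ on $\Lambda^{1,0}$. To check it, write $\bar\Xi = c\, d\bar z^1\wedge\cdots\wedge d\bar z^{2n}$ with $c\neq 0$ in local coordinates. Then $dz^j\wedge\bar\Xi$ for $j=1,\dots,2n$ form a basis of $\Lambda^{1,2n}$, which gives injectivity. Nothing else in the argument requires hyperHermitian structure.
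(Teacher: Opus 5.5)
Your proposal is correct and is essentially the paper's proof: the paper also expands $\partial(e^f\bar\Xi)$ by the Leibniz rule into $(\partial f+\alpha_\Xi)\wedge(e^f\bar\Xi)$ and reads off $\alpha_{e^f\Xi}=\alpha_\Xi+\partial f$. Your extra check that $\beta\mapsto\beta\wedge\bar\Xi$ is injective on $\Lambda^{1,0}$, which is what makes $\alpha$ well defined and lets you read it off, is left implicit in the paper and is a welcome addition.
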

\begin{proof}
We compute
\begin{align*}
\partial (e^{f} \bar{\Xi}) =&\ \partial(e^{f})\wedge\bar{\Xi}
  +e^{f}\partial \bar{\Xi} = \left( \del f + \alpha_{\Xi} \right) \wedge (e^{f}\bar{\Xi}).
\end{align*}
\end{proof}

\begin{defn}
The first quaternionic Bott--Chern class of the hypercomplex structure is defined as
\[
c^{\mathrm{qBC}}_1(M, S) := [-\partial_J \alpha]_{\mathrm{qBC}},
\]
well-defined for any choice of hyperHermitian metric $g$ by Lemma \ref{l:transgression}.
\end{defn}
\begin{rmk}
A sign discrepancy exists between our definition and the one given, for instance, in \cite[Definition 3.1]{fusi2026}, which is due to different conventions. We point out, however, that the definition of positivity remains consistent. We say that the first quaternionic Bott--Chern class of the hypercomplex structure is positive if there exists a hyperHermitian metric such that $-\partial_J \alpha$ is positive in the sense of Definition~\ref{dfn:positive}, that is, 
\[
-\partial_J \alpha (JX,X) >0.
\]
Observe that this is equivalent to 
\[
\partial_J \alpha (X,JX) >0,
\]
which is precisely the definition of positivity adopted in \cite{fusi2026}.
\end{rmk}
\section{HKT Ricci flow} \label{s:HKTflow}

In this section we develop the fundamental theory of HKT Ricci flow, leading to the proof of Theorem \ref{t:mainthm1}.

\subsection{Curvature identities}

The basic motivation for the definition of our flow comes from the following fundamental curvature identity of Ivanov-Papadopoulos which relates Bismut and Chern Ricci curvatures.  

\begin{lem}\cite[Proposition 3.3, Proposition 3.16]{Ivanovstring}.\label{l:Ricciswap} Let $(M^{2n}, I, g)$ be a Hermitian manifold.  Then
\begin{align} \label{f:Ricciswap}
S = (\rho^B)^{1,1} + Q^1 - \tfrac{1}{4} \lambda^{\omega},
\end{align}
where
\begin{align*}
    Q^1(X,Y) = g ( i_X T, i_{IY} T) ,
\end{align*}
and $T$ is the torsion of the Chern connection of $(I,g)$.
Furthermore 
\begin{equation} \label{eqn:grf}
 \tfrac14 \lambda^{\omega}(X,Y)- \rho^B(X,Y) = -\bigl( \mathrm{Rc}^B(X,IY) + (\nabla^B_X \theta) IY \bigr).   
\end{equation}
\end{lem}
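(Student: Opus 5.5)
The plan is to derive both identities by comparing connections, working with the Bismut connection $\nabla^B$ as the base. The Chern connection differs from it by a tensor. From $g(\nabla^B_XY,Z)=g(\nabla_XY,Z)+\tfrac12 H(X,Y,Z)$ and \eqref{eqn:chern_torsion}, one checks that
\[
g(\nabla^C_XY,Z)=g(\nabla^B_XY,Z)+A(X,Y,Z),
\]
where $A$ is built linearly from $H$ and $I$; concretely, $A(X,Y,Z)=-\tfrac12 H(X,Y,Z)+\tfrac12\bigl(\text{terms obtained by inserting } I \text{ in two slots}\bigr)$. Both connections are Hermitian, so $A(X,\cdot,\cdot)$ is skew and $I$-invariant.

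\textbf{Curvature comparison.} The standard formula gives
\[
R^C(X,Y)=R^B(X,Y)+(\nabla^B_XA)_Y-(\nabla^B_YA)_X+[A_X,A_Y]+A_{H(X,Y)}.
\]
Trace this in the first pair against $\omega$, i.e.\ apply $\tfrac12\sum_i(\cdot)(e_i,Ie_i,X,Y)$. This yields
\[
S=S^B+(\text{first-order terms }\nabla^BH\text{ contracted with }I)+(\text{quadratic terms in }H).
\]
I will then rewrite the quadratic terms in terms of the Chern torsion $T$ via \eqref{eqn:chern_torsion}, expecting them to assemble into $Q^1(X,Y)=g(i_XT,i_{IY}T)$.

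\textbf{Swapping pairs.} The next step replaces $S^B$ by $\rho^B$. For a metric connection with totally skew torsion there is the pair-symmetry defect identity
\[
R^B(X,Y,Z,W)-R^B(Z,W,X,Y)=\tfrac12\,dH(X,Y,Z,W)-\tfrac12\bigl(\text{a } \nabla^BH \text{ combination}\bigr).
\]
I will obtain it from the first Bianchi identity
\[
\mathfrak S_{X,Y,Z}\,R^B(X,Y,Z,W)=\mathfrak S\bigl[(\nabla^B_XH)(Y,Z,W)+\sigma^H\bigr]
\]
together with
\[
dH=\mathfrak S(\nabla^B H)+2\sigma^H,
\]
using the usual four-fold cyclic argument. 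Contracting with $(e_i,Ie_i)$ converts $S^B$ into $\rho^B$ plus $\tfrac14\lambda^\omega$ (up to the sign to be fixed) plus $\nabla^BH$ traces. The $\nabla^BH$ traces from this step should cancel the first-order terms coming from $\nabla^BA$ in the curvature comparison. Taking the $(1,1)$-part, using that $S$ and $Q^1$ are $I$-invariant, then gives \eqref{f:Ricciswap}.

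\textbf{The second identity \eqref{eqn:grf}.} Write
\[
\rho^B(X,Y)=\tfrac12\sum_i R^B(X,Y,e_i,Ie_i)
\]
and apply the same Bianchi identity in the first three slots, with $Z=e_i$ and $W=Ie_i$. The two cyclic terms $R^B(Y,e_i,X,Ie_i)+R^B(e_i,X,Y,Ie_i)$ become, after using that $R^B(\cdot,\cdot)$ commutes with $I$ and summing, $-\mathrm{Rc}^B(X,IY)$ up to the pair-swap correction. The torsion terms $\sum_i(\nabla^B_X H)(Y,e_i,Ie_i)$ are, by $\nabla^BI=0$ and the formula for $\theta$, equal to a multiple of $(\nabla^B_X\theta)(IY)$. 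The remaining traces of $dH$ and $\sigma^H$ produce $\tfrac14\lambda^\omega$.

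\textbf{Main obstacle.} I expect the hardest part to be the bookkeeping of the quadratic torsion terms: showing that $[A,A]$, $A_{H}$ and the $\sigma^H$ contributions from the Bianchi identity combine exactly into $Q^1$ with no leftover. I would first verify this at a point in a unitary frame, decomposing $H$ into its $(2,1)+(1,2)$ components where $T$ is simply (a multiple of) the $(2,1)$-part. With that decomposition, the contraction reduces to an identity in $\Lambda^{2,1}$ that can be checked componentwise. Sign conventions, in particular $I$ acting with $(-1)^k$ and $\omega=g(\cdot,I\cdot)$, will be fixed by testing on a Hopf surface where all quantities are explicit.
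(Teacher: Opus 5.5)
The paper gives no proof of this lemma; it is quoted from Ivanov--Papadopoulos. The nearest in-paper computation is the proof of \eqref{eqn:trdH}, which uses the same Bismut pair-swap identity. Your plan is therefore an independent derivation. It is sound, and the cancellations you anticipate do occur, provided you choose the right form of the swap identity.

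\textbf{First identity.} With $A=\nabla^C-\nabla^B$ one finds $A(X,Y,Z)=-\tfrac12\bigl(H(X,Y,Z)+H(X,IY,IZ)\bigr)$. Set $\mathcal D(Z,W):=\sum_i(\nabla^B_{e_i}H)(Ie_i,Z,W)$ and $b^{1,1}(Z,W):=\tfrac12\bigl(b(Z,W)+b(IZ,IW)\bigr)$. Then the first-order part of $S-S^B$ is $-\mathcal D^{1,1}$. To cancel it, write the swap in divergence form,
\[
R^B(A,B,C,D)-R^B(C,D,A,B)=-\tfrac12 dH(A,B,C,D)+\sigma^H(A,B,C,D)+(\nabla^B_AH)(B,C,D)-(\nabla^B_BH)(A,C,D),
\]
where $\sigma^H(X,Y,Z,W)=\mathfrak S_{X,Y,Z}\,g(H(X,Y),H(Z,W))$ and $(\operatorname{tr}\sigma^H)(Z,W):=\sum_i\sigma^H(Z,W,e_i,Ie_i)$.
\begin{itemize}
\item Tracing gives $S^B=\rho^B-\tfrac14\lambda^\omega+\tfrac12\operatorname{tr}\sigma^H+\mathcal D$, which also fixes the sign of $\lambda^\omega$.
\item If you use the form from the proof of \eqref{eqn:trdH} instead, you get $+\tfrac14\lambda^\omega$ with gradient traces $\nabla^B(\theta\circ I)$. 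These do not cancel $\mathcal D$ until you re-expand $\lambda^\omega$.
\item The leftover $\mathcal D-\mathcal D^{1,1}$ disappears under the $(1,1)$-projection. This step also needs $\lambda^\omega$ to be $(1,1)$, which holds because $dH=dd^c\omega$ is $(2,2)$.
\end{itemize}
The quadratic terms need no unitary-frame decomposition:
\begin{itemize}
\item $\operatorname{tr}\sigma^H=2H_I^2-2\,\iota_{I\theta^\sharp}H$.
\item $\tfrac12\sum_i g(A_{H(e_i,Ie_i)}Z,W)=(\iota_{I\theta^\sharp}H)^{1,1}(Z,W)$, which cancels the $\iota_{I\theta^\sharp}H$ piece.
\item One use of \eqref{eq:type_condition} gives $\tfrac12\sum_i g([A_{e_i},A_{Ie_i}]Z,W)=\tfrac14\bigl(H^2(Z,IW)-H^2(IZ,W)\bigr)-\tfrac32(H_I^2)^{1,1}(Z,W)$.
\end{itemize}
The total is $\tfrac14\bigl(H^2(Z,IW)-H^2(IZ,W)-H_I^2(Z,W)-H_I^2(IZ,IW)\bigr)$. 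This is exactly $Q^1$ by the expansion in the proof of Lemma \ref{l:Q_1}, which is valid for any Hermitian metric.

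\textbf{Second identity.} For \eqref{eqn:grf}, the two cyclic terms give exactly $\mathrm{Rc}^B(Y,IX)-\mathrm{Rc}^B(X,IY)$. Subtract half the $(e_i,Ie_i)$-trace of $dH=\mathrm{Alt}(\nabla^BH)+2\sigma^H$ from the traced Bianchi identity. This removes both the divergence traces and the $\sigma^H$ traces:
\[
2\rho^B(X,Y)+\mathrm{Rc}^B(Y,IX)-\mathrm{Rc}^B(X,IY)=\tfrac12\lambda^\omega(X,Y)+(\nabla^B_X\theta)IY-(\nabla^B_Y\theta)IX .
\]
So the $\sigma^H$ traces cancel; they do not contribute to $\lambda^\omega$ as you suggested.

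What you call the pair-swap correction is precisely the twisted symmetry
\[
\mathrm{Rc}^B(IX,IY)=\mathrm{Rc}^B(Y,X)-(\nabla^B_{IX}\theta)IY+(\nabla^B_Y\theta)X .
\]
This is the Ivanov--Papadopoulos identity used in the proof of Proposition \ref{p:PhiasChern}. It follows by setting $(A,B,C,D)=(e_i,IX,Y,Ie_i)$ in the swap identity and summing, and it is exact. Substituting it gives \eqref{eqn:grf} with no quadratic remainder.

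\textbf{Sign check.} Your Hopf-surface test is too weak. The standard Hopf metric is Bismut flat with $dH=0$, so it only checks $S=Q^1$. It cannot detect the sign of $\lambda^\omega$ or the coefficient of $\rho^B$. Those must come from the computation itself, or from a non-pluriclosed example such as a conformally rescaled Hopf metric.
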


From this lemma a naive idea emerges.  For an HKT structure we know that $\rho^B$ vanishes. As we will show in the next lemma, $\lambda^{\omega}$ defines a Salamon $2$-form, hence by Lemma \ref{l:Ricciswap} appears to define a natural elliptic operator on the space of HKT structures, as $S$ is elliptic.  However to properly define a parabolic flow of HKT structures we need this form to also be closed under the Salamon differential.  To achieve this we must modify by a delicate further choice of lower order quadratic torsion term.  The rest of the section develops curvature identities building towards this point which we prove in Proposition \ref{p:Phiprops}.

\begin{lem} \label{l:trdHalgebraic} Given $( I, J, K,g)$ HKT, we have
\begin{align*}
    \lambda^{\omega} \in \Omega^{1,1}_I \cap \Omega^{2,0 + 0,2}_J \cap \Omega^{2,0 + 0,2}_K.
\end{align*}
\end{lem}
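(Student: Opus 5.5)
The plan is to verify the three type conditions directly from the definition $\lambda^{\omega}(X,Y)=\sum_{i=1}^{4n} dH(X,Y,e_i,Ie_i)$. The only input is the fact, recalled above, that for an HKT structure $dH$ is of type $(2,2)$ with respect to each of $I$, $J$, $K$. For a real $4$-form $\beta$ of type $(2,2)$ with respect to $L$, the action of $L$ on all four slots multiplies the $(p,q)$-components by $\i^{\,p-q}$, so it fixes $\beta$:
\begin{equation*}
\beta(LX,LY,LZ,LW)=\beta(X,Y,Z,W), \qquad L\in\{I,J,K\}.
\end{equation*}
We only need this invariance; none of the finer structure of $(2,2)$-forms is used.

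A second elementary observation is that for any $2$-form $\gamma$ the quantity $\sum_i \gamma(e_i,Ie_i)$ does not depend on the orthonormal frame $\{e_i\}$, since it is a contraction of $\gamma$ against $\omega$. If $L\in\{I,J,K\}$ is $g$-orthogonal, then $\{f_i:=Le_i\}$ is again an orthonormal frame, so we may reindex the trace along it.

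\textbf{$I$-type.} We want $\lambda^{\omega}(IX,IY)=\lambda^{\omega}(X,Y)$. Apply the invariance with $L=I$, writing $X=-I(IX)$ etc.:
\begin{equation*}
\sum_i dH(IX,IY,e_i,Ie_i)=\sum_i dH(I^2X,I^2Y,Ie_i,I^2e_i)=\sum_i dH(X,Y,f_i,If_i),
\end{equation*}
where $f_i=Ie_i$. Here the two signs coming from $I^2X=-X$ and $I^2Y=-Y$ cancel. The last sum is $\lambda^{\omega}(X,Y)$ by frame independence. Thus $\lambda^{\omega}$ is $I$-invariant, i.e. it lies in $\Omega^{1,1}_I$.

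\textbf{$J$- and $K$-types.} We want $\lambda^{\omega}(JX,JY)=-\lambda^{\omega}(X,Y)$, which characterizes $\Omega^{2,0+0,2}_J$. Using invariance under $J$:
\begin{equation*}
\sum_i dH(JX,JY,e_i,Ie_i)=\sum_i dH(X,Y,Je_i,JIe_i).
\end{equation*}
Now $JI=-K$, and with $f_i:=Je_i$ we have $Ke_i=IJe_i=If_i$, so $JIe_i=-If_i$. The sum therefore equals $-\sum_i dH(X,Y,f_i,If_i)=-\lambda^{\omega}(X,Y)$, again by frame independence. The identical argument with $L=K$ works, using $KI=J$ and $Je_i=KIe_i$. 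Explicitly, with $f_i=Ke_i$ we get $KIe_i=Je_i=-IKe_i=-If_i$, which gives $\lambda^{\omega}(KX,KY)=-\lambda^{\omega}(X,Y)$.

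\textbf{Main obstacle.} The computation itself is routine. The only step needing care is the justification of the slotwise invariance of $dH$ under each $L$, which comes from the $(2,2)$-type statement quoted above, together with keeping the signs straight in the quaternionic relations.
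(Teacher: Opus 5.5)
Your proof is correct and follows the paper's argument: both use the slotwise invariance of $dH$ under each $L\in\{I,J,K\}$, which comes from $dH$ being of type $(2,2)$. Both then move $J$ (resp.\ $K$) onto the trace slots and use the quaternionic relation $JI=-IJ$ to pick up the sign. You additionally spell out the $I$-case and the frame-independence of the trace, which is what identifies $\sum_i dH(X,Y,Je_i,IJe_i)$ with $\lambda^{\omega}(X,Y)$; the paper leaves both points implicit.
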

\begin{proof}  
    As $dH \in \Omega^{2,2}_I$, it follows easily that  $\mathrm{tr}_{\omega} dH \in \Omega^{1,1}_I$.  We further compute 
\begin{align*}
\lambda^{\omega}(JX,JY) =&\ 
\sum_{i=1}^{4n} dH(JX,JY,e_i,Ie_i) \\
=&\ 
\sum_{i=1}^{4n} dH(X,Y,Je_i,JIe_i) \\
=&\ 
-\sum_{i=1}^{4n} dH(X,Y,Je_i,IJe_i) \\
=&\ 
-\lambda^{\omega}(X,Y),
\end{align*}
where in the second equality we used the fact that $dH \in \Omega^{2,2}_J$, and in the third equality the quaternionic relation $JI=-IJ$. The argument for $K$ is analogous.
\end{proof}

\begin{prop} 
Let $( I, J, K,g)$ be a HKT structure.  Then
\begin{equation} \label{eqn:trdH}
\tfrac{1}{4}\lambda^{\omega} = S^B - \rho + H_I^2,
\end{equation}
where
\begin{equation} \label{f:HIdef}
H_I^2(X,Y)
:=
\sum_{i=1}^{4n} g\bigl(H(X,Ie_i),H(Y,e_i)\bigr),
\end{equation}
where $\{e_i\}$ is a local $g$-orthonormal frame.
\end{prop}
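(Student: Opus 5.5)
The plan is to prove \eqref{eqn:trdH} directly from the algebraic Bianchi identity for the Bismut connection. I would use the HKT hypothesis in three places: $\rho^B = 0$, $\rho = dI\theta$ from \eqref{eqn:rhoChern}, and $H \in \Omega^{2,1+1,2}_L$ for every $L \in \{I,J,K\}$.

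\textbf{Step 1 (Bianchi identities).} For a metric connection with totally skew torsion $H$, I would take the standard identity
\[
\mathfrak{S}_{X,Y,Z} R^B(X,Y,Z,W) = \tfrac12 dH(X,Y,Z,W) - \sigma^H(X,Y,Z,W),
\]
where $\sigma^H(X,Y,Z,W) = \mathfrak{S}_{X,Y,Z}\, g(H(X,Y),H(Z,W))$. I would also take its consequence for the failure of pair symmetry,
\[
R^B(X,Y,Z,W) - R^B(Z,W,X,Y) = \tfrac12\bigl(dH(X,Y,Z,W) - \ldots\bigr) + (\nabla^B H\text{-terms}).
\]
The $\nabla^B H$ terms are antisymmetrized and combine into $dH$ because $\nabla^B H$ has the same skew symmetry as $dH$ up to quadratic torsion terms. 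I would fix all signs to match the convention $g(\nabla^B_XY,Z) = g(\nabla_XY,Z) + \tfrac12 H(X,Y,Z)$.

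\textbf{Step 2 (tracing).} I would evaluate the Bianchi identity at $(e_i, Ie_i, X, Y)$ and sum over $i$. The first term gives $2S^B(X,Y)$. The $dH$ term gives $\tfrac12\lambda^{\omega}(X,Y)$. The two remaining cyclic terms are
\[
\sum_i \bigl( R^B(Ie_i,X,e_i,Y) + R^B(X,e_i,Ie_i,Y) \bigr).
\]
Using that $R^B(\cdot,\cdot,IZ,IW) = R^B(\cdot,\cdot,Z,W)$, together with the pair-symmetry formula from Step 1, these become a first-Ricci-type contraction. That contraction is controlled by $\rho^B$, which vanishes for HKT, plus $dH$- and $\nabla^B\theta$-terms. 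An alternative route to the same bookkeeping is to combine \eqref{eqn:grf} with $\rho^B = 0$, which gives $\tfrac14\lambda^\omega(X,Y) = -\mathrm{Rc}^B(X,IY) - (\nabla^B_X\theta)(IY)$. Then $\mathrm{Rc}^B(X,IY)$ can be rewritten in terms of $S^B$ through the traced Bianchi identity.

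\textbf{Step 3 (identifying $\rho$).} The derivative terms in $\theta$ should assemble into $-dI\theta = -\rho$. To see this I would write $dI\theta(X,Y) = (\nabla^B_X I\theta)(Y) - (\nabla^B_Y I\theta)(X) + H(X,Y,I\theta^\sharp)$ and use $\nabla^B I = 0$. Lemma \ref{l:trdItheta} serves as a consistency check after taking the $\omega$-trace of both sides.

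\textbf{Step 4 (quadratic terms) and the main obstacle.} It then remains to show that the traced $\sigma^H$, together with the leftover $H\cdot\theta$ and $H\cdot H$ terms from Step 3, equals $H_I^2$ defined in \eqref{f:HIdef}. This is the step I expect to be the main obstacle. Terms such as $\sum_i g(H(e_i,Ie_i),H(X,Y))$ reduce to $\theta$-terms via the trace formula for $\theta$. The cross terms $\sum_i g(H(Ie_i,X),H(e_i,Y))$ must be shown to combine into $H_I^2$ with the right coefficient. For that I would use that $H$ is of type $(2,1)+(1,2)$ for $I$, $J$, and $K$ simultaneously, which allows swapping $I$ between arguments inside these contractions.

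As a final check, I would verify that both sides lie in $A^{1,1}$, using Lemma \ref{l:trdHalgebraic} and item \ref{item:6}. I would also cross-check against Lemma \ref{l:Ricciswap}, which with $\rho^B = 0$ reads $\tfrac14\lambda^\omega = Q^1 - S$.
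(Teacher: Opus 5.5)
There is a genuine gap in Step 1. The first Bianchi identity you start from is false, and the discrepancy is not a matter of signs. For $\nabla^B=\nabla+\tfrac12 H$, whose torsion is $H$, the correct identity is
\[
\mathfrak{S}_{X,Y,Z}R^B(X,Y,Z,W)=dH(X,Y,Z,W)-\sigma^H(X,Y,Z,W)+(\nabla^B_W H)(X,Y,Z).
\]
In general the left-hand side is not even skew in $W$, so it cannot equal a $4$-form. Both differences break your computation. Trace at $(e_i,Ie_i,X,Y)$. The two leftover cyclic terms equal $-2\mathrm{Rc}^B(X,IY)$, with $\mathrm{Rc}^B(Y,Z)=\sum_i R^B(e_i,Y,Z,e_i)$; this needs only $I$-invariance in the last pair and the reindexing $e_i\mapsto Ie_i$. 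By \eqref{eqn:grf} with $\rho^B=0$, this equals $\tfrac12\lambda^{\omega}(X,Y)+2(\nabla^B_X\theta)(IY)$. With your coefficient $\tfrac12 dH$, the $\lambda^{\omega}$-terms on the two sides cancel identically, so no formula for $\lambda^{\omega}$ can come out. Without the term $(\nabla^B_W H)(X,Y,Z)$, Step 3 also fails: $(\nabla^B_X\theta)(IY)$ alone is not skew in $X,Y$, so it cannot assemble into $dI\theta$. For the same reason, your remark that the $\nabla^B H$-terms in the pair-symmetry formula ``combine into $dH$'' is wrong. In $\tfrac12 dH(X,Y,Z,W)=R^B(X,Y,Z,W)-R^B(Z,W,X,Y)+\nabla_ZH(W,X,Y)-\nabla_WH(Z,X,Y)$, it is precisely these derivative terms that produce $-\rho+H_I^2$ after tracing.

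With the correct identity, your ``alternative route'' does work. Trace it, using $\nabla^B I=0$ and $\sum_i H(e_i,Ie_i,\cdot)=-2I\theta$:
\[
2S^B(X,Y)-2\mathrm{Rc}^B(X,IY)=\lambda^{\omega}(X,Y)-\textstyle\sum_i\sigma^H(e_i,Ie_i,X,Y)-2(\nabla^B_Y I\theta)(X).
\]
Now insert \eqref{eqn:grf} and $(\nabla^B_X\theta)(IY)=-(\nabla^B_X I\theta)(Y)$. This yields $\tfrac12\lambda^{\omega}=2S^B-2d^{\nabla^B}I\theta+\sum_i\sigma^H(e_i,Ie_i,\cdot,\cdot)$, where $d^{\nabla^B}I\theta=\rho-\iota_{I\theta^\sharp}H$ by \eqref{eqn:dItheta}. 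Your ``main obstacle'' is then short: $\sum_i\sigma^H(e_i,Ie_i,X,Y)=-2\iota_{I\theta^\sharp}H(X,Y)+2H_I^2(X,Y)$. Each cross term gives $H_I^2$ after reindexing $e_i\mapsto Ie_i$, so no $J$- or $K$-type condition is needed. The $\iota_{I\theta^\sharp}H$ terms cancel, leaving $\tfrac12\lambda^{\omega}=2S^B-2\rho+2H_I^2$. The paper avoids both the first Bianchi identity and \eqref{eqn:grf}. It traces Bismut's pair-symmetry identity over $(e_i,Ie_i)$ in the first pair, which gives $2S^B-2\rho^B$ directly, so only the derivative terms remain to be unpacked. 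Your route instead uses the Ivanov--Papadopoulos identity \eqref{eqn:grf} as a black box.
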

\begin{proof}
We use known identities for the Hermitian structure $(g, I)$.  Let $H=d^c_I \omega$. Let $\nabla^+=\nabla^B$ and let $\nabla^-$ denote the metric connection with opposite torsion, that is 
\[
g(\nabla^{\pm}_X Y, Z)=g(\nabla_X Y, Z) \pm \tfrac{1}{2} d^c_I \omega (X,Y,Z),
\]
where $\nabla$ is the Levi-Civita connection of $g$. 
The curvature tensors $R^\pm$ have the following expression (see, for instance \cite[Proposition 3.18]{GRFbook}):
\begin{align*}
R^{\pm}(X,Y,Z,W)=&\ R(X,Y,Z,W)
\pm \tfrac{1}{2} \nabla_X H(Y,Z,W) \mp \tfrac{1}{2} \nabla_Y H(X,Z,W) \\
&-\tfrac{1}{4} \langle H(X,W),H(Y,Z)\rangle+ \tfrac{1}{4} \langle H(Y,W),H(X,Z)\rangle.
\end{align*}
This yields the consequence
\[
R^-(X,Y,Z,W)=R^+(X,Y,Z,W)-  \nabla_X H(Y,Z,W) +  \nabla_Y H(X,Z,W).
\]
By \cite[Theorem 1.6]{Bismut}, 
\begin{align*}
\tfrac{1}{2} dH(X,Y,Z,W)=&\ R^+(X,Y,Z,W)-R^-(Z,W,X,Y)\\
=&\ R^+(X,Y,Z,W)-R^+(Z,W,X,Y)+  \nabla_Z H(W,X,Y) - \nabla_W H(Z,X,Y).
\end{align*}
We exploit the above equality to obtain the desired expression for $\lambda^{\omega}$:
\begin{align*}
\tfrac{1}{2} \lambda^{\omega} (Z,W) =&\ \tfrac{1}{2}\sum_{i=1}^{4n} dH(e_i, Ie_i, Z,W)\\
=&\ \sum_{i=1}^{4n} R^B(e_i,Ie_i,Z,W)-R^B(Z,W,e_i,Ie_i)+  \nabla_Z H(W,e_i,Ie_i) -  \nabla_W H(Z,e_i,Ie_i)\\
=&\ 2 S^B(Z,W)-2\rho^B(Z,W) + \sum_{i=1}^{4n}\nabla_Z H(W,e_i,Ie_i) -  \nabla_W H(Z,e_i,Ie_i)\\
=&\ 2 S^B(Z,W)+\sum_{i=1}^{4n}\nabla_Z H(W,e_i,Ie_i) -  \nabla_W H(Z,e_i,Ie_i),
\end{align*}
where $\rho^B$ vanishes since the structure is HKT. \par
On the other hand, 
\[
\sum_{i=1}^{4n}\nabla_Z H(W,e_i,Ie_i)=-2 \nabla^B_Z I\theta (W)-\iota_{I\theta^\sharp}H(Z,W)+g(H(Z,Ie_i), H(W,e_i)),
\]
which implies that 
\[
\tfrac{1}{2} \lambda^{\omega} (Z,W)=2 S^B(Z,W)-2 d^{\nabla^{B}} I\theta (Z,W)-2 \iota_{I\theta^\sharp}H(Z,W)+ 2 g(H(Z,Ie_i), H(W,e_i)),
\]
where $d^{\nabla^{B}}$ is the differential respect to $\nabla^B$. Using that $\nabla^B=\nabla+\tfrac{1}{2}H$, 
\begin{equation} \label{eqn:dItheta}
d^{\nabla^{B}} I\theta(Z,W)=dI\theta(Z,W)-\iota_{I\theta^\sharp}H(Z,W)=\rho (Z,W)-\iota_{I\theta^\sharp}H(Z,W), 
\end{equation}
implying that 
\[
\tfrac{1}{2} \lambda^{\omega} (Z,W)=2 S^B(Z,W)-2 \rho (Z,W)+ 2 g(H(Z,Ie_i), H(W,e_i)),
\]
as claimed.
\end{proof}

\begin{lem} \label{l:Qalgebraic} Given $(I, J, K,g)$ an HKT structure, define
\begin{align*}
    Q^2 = H_I^2 - J H_I^2,
\end{align*}
where $H_I^2$ is defined as in \eqref{f:HIdef}.  Then $Q^2 \in A^{1,1}$.
\begin{proof}Equation \eqref{eqn:trdH} directly implies $I H_I = H_I$: all other terms lie in $\Omega^{1,1}_I$, so $H_I$ must as well. Moreover, since $I$ and $J$ anticommute, the tensor $Q^2$ also belongs to $\Omega^{1,1}_I$, while by construction $Q^2 \in \Omega^{2,0+0,2}_J$. Using this and the quaternionic relations, it follows easily that $Q^2 \in \Omega^{2,0 + 0,2}_K$, as claimed.    
\end{proof}
\end{lem}

\subsection{Ricci curvatures and \texorpdfstring{$\Phi$}{Phi}}

With the above preliminaries in place we are ready to unpack the geometric structure of the operator $\Phi$ introduced in \eqref{f:phi}.

\begin{prop} \label{p:PhiasChern} Given $(I, J, K,g)$ an HKT structure, the operator $\Phi(g)$ can be expressed equivalently as:
\begin{enumerate} [label={(\arabic*)}]
\item $ \Phi(g) = - \tfrac{1}{4} \lambda^{\omega} + \tfrac12 Q^2$.
\item $ \Phi(g) = S - Q, $ where
\begin{equation}\label{eqn:Q3}
    Q(X,Y) := \left( Q^1-\frac{1}{2}Q^2 \right)(X,Y) = g ( i_X T, i_{IY} T)
-\tfrac{1}{2} \big(H_I^2(X,Y)-JH_I^2(X,Y)\big).
\end{equation}
\item $\Phi(g) = -S^B +\rho_I^C-\frac{1}{2} (H_I^2 + J H_I^2)$.
\item $\Phi( g)(IX,Y)=\mathrm{Rc}-\tfrac14 H^2 +\tfrac12 \mathcal{L}_{\theta^\sharp}g +\tfrac 12 \hat Q^2$, where 
\begin{equation}  \label{eqn:Q_hat}
\hat Q^2(X,Y):=H^2_I (IX,Y)-H^2_I(JX,KY).
\end{equation}
\end{enumerate}
\end{prop}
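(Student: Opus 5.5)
Starting from the definition $\Phi(g)=\tfrac12(\rho-J\rho)$, I derive (1) first and obtain the other three formulas from it. For (1), apply $J$ to the identity \eqref{eqn:trdH}, $\tfrac14\lambda^\omega=S^B-\rho+H_I^2$. By Lemma \ref{l:trdHalgebraic}, $J\lambda^\omega=-\lambda^\omega$. By item \ref{item:6} of the list of HKT properties, $S^B$ is $J$-invariant, so $JS^B=S^B$. Subtracting the $J$-transformed identity from the original gives
\[
\tfrac12\lambda^\omega=-(\rho-J\rho)+(H_I^2-JH_I^2)=-2\Phi(g)+Q^2,
\]
which is (1).

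Adding the two identities instead gives $0=2S^B-(\rho+J\rho)+(H_I^2+JH_I^2)$, so $\tfrac12(\rho+J\rho)=S^B+\tfrac12(H_I^2+JH_I^2)$. Then
\[
\Phi=\rho-\tfrac12(\rho+J\rho)=\rho-S^B-\tfrac12(H_I^2+JH_I^2),
\]
which is (3).

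For (2), use Lemma \ref{l:Ricciswap} with $\rho^B=0$: this gives $S=Q^1-\tfrac14\lambda^\omega$. Combining with (1), $\Phi=S-Q^1+\tfrac12Q^2=S-Q$. Before using this, I need to check that $\rho^B=0$ and not only its $(1,1)$-part; this holds because the Bismut holonomy lies in $\Sp(n)$.

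For (4), use \eqref{eqn:grf}, which with $\rho^B=0$ reads $\tfrac14\lambda^\omega(X,Y)=-\mathrm{Rc}^B(X,IY)-(\nabla^B_X\theta)(IY)$. Substitute $X\mapsto IX$ and use (1):
\[
\Phi(IX,Y)=\mathrm{Rc}^B(IX,IY)+(\nabla^B_{IX}\theta)(IY)+\tfrac12Q^2(IX,Y).
\]
Next I symmetrize. Since $\Phi$ is $I$-invariant, $\Phi(IX,Y)$ is symmetric, so only the symmetric parts of the right-hand side matter. The standard decomposition is $\mathrm{Rc}^B=\mathrm{Rc}-\tfrac14H^2-\tfrac12 d^*H$, with symmetric part $\mathrm{Rc}-\tfrac14H^2$. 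The term $\nabla^B\theta$ has symmetric part $\tfrac12\mathcal L_{\theta^\sharp}g$, because the torsion correction is skew.

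The main obstacle is the $I$-twist. The expression $\mathrm{Rc}^B(IX,IY)+(\nabla^B_{IX}\theta)(IY)$ is not obviously $\mathrm{Rc}^B(X,Y)+\ldots$. I would handle it with the identity $d^*H$-part $=\tfrac12 d^c$-type expression, i.e. the known relation $\mathrm{Rc}^B(X,IY)+\nabla^B_X\theta(IY)$ being $I$-anti-invariant up to $\rho^B$, which is zero here, as in \cite{Ivanovstring}. Together with $\nabla^B I=0$, this turns the $I$-conjugated symmetric part into $\mathrm{Rc}-\tfrac14H^2+\tfrac12\mathcal L_{\theta^\sharp}g$.

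Finally I treat $\tfrac12Q^2(IX,Y)=\tfrac12\big(H_I^2(IX,Y)-JH_I^2(IX,Y)\big)$. Using $J\beta(X,Y)=\beta(JX,JY)$ on 2-tensors and $JI=-K$, the second term becomes $H_I^2(-KX,JY)$. Then $I$-invariance of $H_I^2$ (Lemma \ref{l:Qalgebraic}) and the quaternion relations give $-H_I^2(JX,KY)$, which matches \eqref{eqn:Q_hat}. I expect the main work to be tracking signs in this step and in the $I$-twist of the Ricci term.
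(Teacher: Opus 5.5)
Your treatment of (1)--(3) and of the $\hat Q^2$ term in (4) is correct and is essentially the paper's argument. Adding and subtracting the $J$-transform of \eqref{eqn:trdH}, using $J\lambda^{\omega}=-\lambda^{\omega}$ and $JS^B=S^B$, is the paper's type decomposition. Item (2) comes out exactly as in the paper from Lemma \ref{l:Ricciswap} with $\rho^B=0$.

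The gap is in the Ricci part of (4). Write $G(X,Y):=\mathrm{Rc}^B(X,Y)+(\nabla^B_X\theta)(Y)$. Your symmetrization correctly reduces the problem to showing $G(IX,IY)=G_{\mathrm{sym}}(X,Y)$. However, the identity you invoke to remove the $I$-twist is misstated. By \eqref{eqn:grf} with $\rho^B=0$, the tensor $\tilde G(X,Y):=G(X,IY)$ equals $-\tfrac14\lambda^{\omega}(X,Y)$. This is $I$-\emph{invariant}, since it is of type $(1,1)$ by Lemma \ref{l:trdHalgebraic}; it is not $I$-anti-invariant. Anti-invariance would give $G(IX,IY)=-G(X,Y)$ and flip the sign of the whole $\mathrm{Rc}-\tfrac14H^2+\tfrac12\mathcal L_{\theta^\sharp}g$ term. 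The references to a ``$d^*H$-part'' identity and to $\nabla^BI=0$ do not supply the missing step either.

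The correct argument is short.
\begin{itemize}
\item $I$-invariance of $\tilde G$, after replacing $Y$ by $IY$, gives $G(IX,IY)=G(X,Y)$.
\item Skew-symmetry of $\tilde G$ gives $G(IX,IY)=G(Y,X)$. This is exactly the Ivanov--Papadopoulos identity (3.26) that the paper quotes.
\end{itemize}
Hence $G$ is symmetric, and
\[
-\tfrac14\lambda^{\omega}(IX,Y)=G(IX,IY)=G_{\mathrm{sym}}(X,Y)=\mathrm{Rc}(X,Y)-\tfrac14H^2(X,Y)+\tfrac12\mathcal L_{\theta^\sharp}g(X,Y).
\]
The last equality uses your correct observations that the symmetric part of $\mathrm{Rc}^B$ is $\mathrm{Rc}-\tfrac14H^2$ and that of $\nabla^B\theta$ is $\tfrac12\mathcal L_{\theta^\sharp}g$.

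The paper reaches the same conclusion differently. It averages the two expressions for $\tfrac14\lambda^{\omega}(IX,Y)$ that come from the $(1,1)$-type, and then applies (3.26). In your framing, where you already know $G(IX,IY)$ is symmetric in $X,Y$, the $I$-invariance identity $G(IX,IY)=G(X,Y)$ alone suffices, so (3.26) is not needed.
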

\begin{proof}
Since $\lambda^{\omega}$ is of type $(2,0)+(0,2)$ with respect to $J$, and $S^B$ is $(1,1)$ with respect to each complex structure $I,J,K$,  Equation~\eqref{eqn:trdH} yields
\[
\frac{1}{4}\lambda^{\omega}
=-\,\Phi(g)+\frac{1}{2}\,Q^2.
\]
Hence,
\[
\Phi(g)=-\frac14\,\lambda^{\omega}+\frac12\,Q^2,
\]
which proves claim~(1).

Applying Lemma~\ref{l:Ricciswap} to the above identity, we obtain
\[
\Phi(g)=S-Q^1+\frac12\,Q^2,
\]
proving claim~(2).

Finally, combining claim~(1) with Equation~\eqref{eqn:trdH}, we find
\[
\Phi(g)
=-\frac14\,\lambda^{\omega}+\frac12\,Q^2
=-S^B+\rho_I^C-H_I^2+\frac12\,Q^2
=-S^B+\rho_I^C-\frac12\bigl(H_I^2+JH_I^2\bigr),
\]
which proves claim~(3).

We now prove the last claim. By item (1),
\[
\Phi(g)(IX,Y) = -\tfrac14 \lambda^{\omega}(IX,Y) + \tfrac12 Q^2(IX,Y).
\]
By Equation \eqref{eqn:grf} and the fact that the Bismut Ricci form vanishes in the HKT setting,
\[
\tfrac14 \lambda^{\omega}(X,Y) = -\bigl( \mathrm{Rc}^B(X,IY) + (\nabla^B_X \theta) IY \bigr).
\]
Furthermore, since $\lambda^{\omega}$ is of type $(1,1)$ with respect to $I$, we have
\[
\tfrac14 \lambda^{\omega}(IX,Y) = -\tfrac12 \Bigl( \mathrm{Rc}^B(IX,IY) + (\nabla^B_{IX} \theta) IY + \mathrm{Rc}^B(X,Y) + (\nabla^B_X \theta) Y \Bigr).
\]
From \cite[Equation 3.26]{Ivanovstring},
\[
\mathrm{Rc}^B(IX,IY) = \mathrm{Rc}^B(Y,X) - (\nabla^B_{IX} \theta) IY + (\nabla^B_Y \theta) X,
\]
and therefore
\[
\tfrac14 \lambda^{\omega}(IX,Y) = - \Bigl( \mathrm{Rc}^{B}_{\mathrm{sym}} + \tfrac12 \mathcal{L}_{\theta^\sharp} g \Bigr)(X,Y).
\]
On the other hand,
\[
Q^2(IX,Y) = H^2_I(IX,Y) - J H^2_I(IX,Y) = H^2_I(IX,Y) + H^2_I(KX,JY) = H^2_I(IX,Y) - H^2_I(JX,KY),
\]
where the last equality holds because $H^2_I$ is of type $(1,1)$ with respect to $I$.

Consequently,
\[
\Phi(g) (I \cdot, \cdot)=  \Bigl( \mathrm{Rc} - \tfrac14 H^2 + \tfrac12 \mathcal{L}_{\theta^\sharp} g \Bigr) + \tfrac12 \hat Q^2.
\]
\end{proof}

\begin{lem} \label{l:Q_1}
Given an HKT metric,
\begin{enumerate} [label={(\arabic*)}]
    \item One has 
    \[
Q (X,Y)=\tfrac{1}{4} \big[H^2(X,IY)-H^2(IX,Y)\big]- H^2_I (X,Y)+ \tfrac{1}{2} JH^2_I (X,Y).
\]
\item $\tr_{\omega} Q = 0$.
\end{enumerate}

\end{lem}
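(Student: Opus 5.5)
Since $Q = Q^1 - \tfrac12 Q^2$ with $Q^2 = H_I^2 - JH_I^2$, the first claim is equivalent to an identity for $Q^1$ alone:
\[
Q^1(X,Y) = \tfrac14\big[H^2(X,IY) - H^2(IX,Y)\big] - \tfrac12 H_I^2(X,Y).
\]
Indeed, subtracting $\tfrac12 H_I^2 - \tfrac12 JH_I^2$ from the right-hand side gives exactly the stated formula. This identity is purely algebraic and holds for any Hermitian structure $(g,I)$, so the plan is to prove it by substituting the torsion formula \eqref{eqn:chern_torsion}.

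Write $T(X,Y) = -\tfrac12\big(H(X,IY,I\cdot) + H(IX,Y,I\cdot)\big)$, viewed as a vector by raising the last index. Then
\[
Q^1(X,Y) = \sum_{i,j} T(X,e_i,e_j)\,T(IY,e_i,e_j)
\]
expands into four terms, each of the form $\tfrac14\sum H(\cdot,\cdot,Ie_j)H(\cdot,\cdot,Ie_j)$.

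The next step is to reindex using that $I$ is orthogonal: replace $Ie_j$ by $e_j$, and also $e_i$ by $Ie_i$ where convenient, since $\{Ie_i\}$ is again an orthonormal frame. After this, each term becomes either $H^2(\cdot,\cdot) := \sum g(H(\cdot,e_i),H(\cdot,e_i))$ evaluated at some pair drawn from $X, IX, Y, IY$, or a cross term of the form $\sum g(H(X,Ie_i),H(Y,e_i))$, that is, $\pm H_I^2$ evaluated at such a pair. Concretely:
\begin{enumerate}
\item the pairing of $H(X,I e_i)$ with $H(IY,Ie_i)$ gives $H^2(X,IY)$;
\item the pairing of $H(IX,e_i)$ with $H(I^2Y,e_i) = -H(Y,e_i)$ gives $-H^2(IX,Y)$;
\item the two mixed terms give $H_I^2$-type contributions. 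To combine them into $-\tfrac12 H_I^2(X,Y)$, one uses that $H_I^2$ is $I$-invariant (established in Lemma \ref{l:Qalgebraic}), so that $H_I^2(IX,IY) = H_I^2(X,Y)$.
\end{enumerate}
The main obstacle is sign bookkeeping, namely tracking the convention $T(X,Y,Z)$ against $i_XT$ and the factor $\tfrac14$. To guard against sign errors, I would check the final identity on a test case: the Hopf surface, where $H$ is the bi-invariant Cartan $3$-form.

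For the second claim, trace with $\omega$, i.e.\ compute $\sum_k Q(e_k, Ie_k)$. The first bracket contributes
\[
\tfrac14\big[H^2(e_k,I^2e_k) - H^2(Ie_k,Ie_k)\big] = -\tfrac12|H|^2.
\]
For the $H_I^2$ part, the trace is $\sum g(H(e_k,Ie_i),H(Ie_k,e_i)) = |H|^2$ after reindexing $e_k \mapsto Ie_k$, up to sign. For the $JH_I^2$ part, $JH_I^2$ is of type $(2,0)+(0,2)$ for $I$, because $I$ and $J$ anticommute, so its $\omega$-trace vanishes. Combining these gives $-\tfrac12|H|^2 \pm |H|^2$. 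The first claim forces the signs so that the total vanishes; I would verify this directly using the same reindexing, and this yields $\tr_\omega Q = 0$.

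Alternatively, the second claim can be obtained without expanding $Q$: use Proposition \ref{p:PhiasChern}(2), $\tr_\omega Q = \tr_\omega S - \tr_\omega\Phi$. Both traces equal the Chern scalar curvature $s$ — for $\Phi$ because $J\rho$ has zero $\omega$-trace, and for $S$ because $\tr_\omega S = \tr_\omega\rho = s$. So $\tr_\omega Q = s - s = 0$. I would present this argument as the main proof of the second claim and use the direct computation only as a consistency check.
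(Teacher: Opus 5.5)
Your part (1) is the paper's argument: reduce to the $Q^1$ identity, expand via \eqref{eqn:chern_torsion}, reindex with $\{Ie_j\}$, and merge the mixed terms using $H^2_I(IX,IY)=H^2_I(X,Y)$. It is fine.

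The gap is in part (2). Your main argument asserts that $J\rho$ has zero $\omega$-trace, and this is false. Taken literally it gives $\tr_\omega\Phi=\tfrac12\tr_\omega\rho=\tfrac12 s$, hence $\tr_\omega Q=\tfrac12 s$. That is not identically zero on a compact non-balanced HKT manifold, so the argument as written does not close. The correct fact is $\tr_\omega(J\beta)=-\tr_\omega\beta$ for every $2$-form $\beta$. Use the orthonormal frame $f_i=Je_i$, for which $If_i=IJe_i=Ke_i$:
\[
\sum_i (J\beta)(e_i,Ie_i)=\sum_i\beta(Je_i,JIe_i)=-\sum_i\beta(Je_i,Ke_i)=-\sum_i\beta(f_i,If_i).
\]
Equivalently, $J$ is a self-adjoint isometry of $\Lambda^2$ with $J\omega=-\omega$. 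Note also that $J$ anticommutes with $I$ and therefore preserves $I$-type $(1,1)$ rather than destroying it; this is what Lemma \ref{l:Qalgebraic} uses. Hence $\tr_\omega\Phi=\tfrac12(s+s)=s$, the identity used in the proof of Theorem \ref{t:scalarthm_bulk}. With this correction your route works: $\tr_\omega Q=\tr_\omega S-\tr_\omega\Phi=s-s=0$. Your claim $\tr_\omega S=\tr_\omega\rho$ is correct, and there is no circularity, since Proposition \ref{p:PhiasChern} does not use this lemma.

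Your direct ``consistency check'' contains the same error. $JH^2_I$ is of type $(1,1)$ for $I$, with $\sum_i JH^2_I(e_i,Ie_i)=-\sum_i H^2_I(e_i,Ie_i)\neq 0$. It also has a second error: $\sum_i H^2_I(e_i,Ie_i)$ is not $\pm|H|^2$. Reindexing only converts it to $-\sum_{i,j,k}H(e_i,e_j,e_k)H(Ie_i,Ie_j,e_k)$. Evaluating this requires the type condition \eqref{eq:type_condition} for $H$ with respect to $I$, which yields $-\tfrac13|H|^2$. Your combination $-\tfrac12|H|^2\pm|H|^2$ can never vanish, so this check would have signalled a contradiction rather than confirmed the result. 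Done correctly, it is exactly the paper's proof: $-\sum_i Q(e_i,Ie_i)=\tfrac12|H|^2+\tfrac32\sum_i H^2_I(e_i,Ie_i)=\tfrac12|H|^2-\tfrac12|H|^2=0$. The paper's route is elementary linear algebra using only the $(2,1)+(1,2)$ type of $H$. Your corrected route is shorter but depends on the full identity $\Phi=S-Q$, i.e.\ on the Ivanov--Papadopoulos and Bismut curvature identities and on $\rho^B=0$.
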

\begin{proof} 
We first establish that 
\[
Q^1(X,Y)=g(i_XT,i_{IY}T)
=\tfrac14\bigl(H^2(X,IY)-H^2(IX,Y)\bigr)-\tfrac12 H_I^2(X,Y).
\]
To see this, note that by Equation~\eqref{eqn:chern_torsion},
\begin{align*}
4\, g(i_XT,i_{IY}T)
&=\bigl(H(X,Ie_i,e_j)+H(IX,e_i,e_j)\bigr)
  \bigl(H(IY,Ie_i,e_j)-H(Y,e_i,e_j)\bigr) \\
&= H^2(X,IY)-H_I^2(X,Y)-H_I^2(IX,IY)-H^2(IX,Y).
\end{align*}
Item (1) follows from the definition of $Q$ given in equation \eqref{eqn:Q3} and the \(I\)-invariance of \(H_I^2\). \par
To prove the second claim we compute 
\begin{align*}
    -\sum_{i=1}^{4n} Q(e_i,Ie_i)&=\sum_{i=1}^{4n}\tfrac12 H^2(e_i,e_i)+\tfrac32 H^2_I(e_i,Ie_i) =\tfrac12 |H|^2+\tfrac32\sum_{i=1}^{4n} H^2_I(e_i,Ie_i).
\end{align*}
To conclude the proof, we need to prove that 
\[
\sum_{i=1}^{4n}H^2_I(e_i,Ie_i)=-\tfrac13 |H|^2.
\]
First, exploiting that $H$ is $(2,1)+(1,2)$ with respect to $I$ (see Equation \eqref{eq:type_condition}), 
\[
\begin{split}
\sum_{i,j,k=1}^{4n} H(e_i,e_j,e_k)H(Ie_i,Ie_j,e_k)&=\sum_{i,j,k=1}^{4n} H(e_i,e_j,e_k)H(e_i,e_j,e_k)-H(e_i,e_j,e_k)H(e_i,Ie_j,Ie_k)\\
&\ \ \   - H(e_i,e_j,e_k)H(Ie_i,e_j,Ie_k)\\
&=|H|^2 -2 \sum_{i,j,k=1}^{4n} H(e_i,e_j,e_k)H(Ie_i,Ie_j,e_k),
\end{split}
\]
that is 
\[
3 \sum_{i,j,k=1}^{4n}  H(e_i,e_j,e_k)H(Ie_i,Ie_j,e_k)= |H|^2.
\]
Furthermore
\begin{align*}
\sum_{i=1}^{4n}H^2_I(e_i,Ie_i)&=\sum_{i,j,k=1}^{4n} H(e_i,Ie_j,e_k) H(Ie_i,e_j,e_k)\\
&=-\sum_{i,j,k=1}^{4n} H(e_i,e_j,e_k) H(Ie_i,Ie_j,e_k)\\
&=-\tfrac13|H|^2,
\end{align*}
giving the desired identity. 
\end{proof}

\subsection{Integrability of \texorpdfstring{$\Phi$}{Phi}}

Next, using these curvature identities we establish several remarkable properties of the operator $\Phi$, in particular showing the key point that it is a tangent vector to the space of HKT structures.  We first need a preliminary lemma which is presumably known to experts:

\begin{lem} \label{l:3formlinearalgebra}
Let $(M,I,J,K)$ be a hypercomplex manifold, and let $\alpha\in \Omega^3(M)$. 
Assume that $\alpha$ is of type $(1,2)+(2,1)$ with respect to each of the complex structures $I$, $J$, and $K$. Then $\alpha$ is of type $(1,2)+(2,1)$ with respect to every induced complex structure
\[
L=aI+bJ+cK,
\qquad a^2+b^2+c^2=1.
\]
\end{lem}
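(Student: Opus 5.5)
The idea is to reduce the statement to pointwise linear algebra on $\Lambda^3T_x^*M$ and use the representation theory of $\mathfrak{sp}(1)\cong\mathfrak{su}(2)$, generated by $I,J,K$.

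\textbf{Step 1: a spectral criterion for the type condition.} For an almost complex structure $L$, let $P_L$ be the derivation extension of $L$ to $3$-forms:
\[
(P_L\alpha)(X,Y,Z)=\alpha(LX,Y,Z)+\alpha(X,LY,Z)+\alpha(X,Y,LZ).
\]
On the complexification, $P_L$ acts on forms of type $(p,q)_L$ by the scalar $\pm\i(p-q)$, with the sign fixed by convention. On $3$-forms the eigenvalues are therefore $\pm 3\i$ on $(3,0)+(0,3)$ and $\pm\i$ on $(2,1)+(1,2)$. Since $P_L$ is diagonalizable, a real $3$-form $\alpha$ is of type $(2,1)+(1,2)$ with respect to $L$ if and only if $P_L^2\alpha=-\alpha$. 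So the hypothesis says
\[
P_I^2\alpha=P_J^2\alpha=P_K^2\alpha=-\alpha,
\]
and the goal is $P_L^2\alpha=-\alpha$ for every $L=aI+bJ+cK$.

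\textbf{Step 2: an $\mathfrak{su}(2)$-action and the Casimir.} The map $A\mapsto P_A$ from endomorphisms of $T_xM$ to endomorphisms of $\Lambda^3T^*_xM$ is a Lie algebra homomorphism, since extension as a derivation respects commutators. The quaternion relations give $[I,J]=2K$, $[J,K]=2I$, $[K,I]=2J$. Hence $\tfrac{1}{2\i}P_I,\tfrac{1}{2\i}P_J,\tfrac{1}{2\i}P_K$ (up to sign) satisfy the standard $\mathfrak{su}(2)$ relations, and $\Lambda^3T_x^*M\otimes\mathbb C$ splits into irreducible spin-$j$ pieces.

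By Step 1, for the unit element $L=I$ the eigenvalues of $P_I$ on $3$-forms lie in $\{\pm\i,\pm3\i\}$. So only spins $j=\tfrac12$ and $j=\tfrac32$ occur, and
\[
\Lambda^3\otimes\mathbb C=W_{1/2}\oplus W_{3/2}
\]
as isotypic components. The Casimir $C=P_I^2+P_J^2+P_K^2$ acts on spin $j$ by $-4j(j+1)$: it is $-3$ on $W_{1/2}$ and $-15$ on $W_{3/2}$. The hypothesis gives $C\alpha=-3\alpha$, so $\alpha\in W_{1/2}$.

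\textbf{Step 3: conclusion for every $L$.} Each $L=aI+bJ+cK$ with $a^2+b^2+c^2=1$ satisfies $P_L=aP_I+bP_J+cP_K$, and this is conjugate to $P_I$ under the $\mathrm{Sp}(1)$ action. On a spin-$\tfrac12$ representation $P_L$ therefore has eigenvalues $\pm\i$ only, so $P_L^2=-1$ on $W_{1/2}$. Since $\alpha\in W_{1/2}$, we get $P_L^2\alpha=-\alpha$, and Step 1 shows $\alpha$ is of type $(2,1)+(1,2)$ with respect to $L$.

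\textbf{Main obstacle.} I expect the care to be needed in Step 1 and in fixing normalizations. One must check that the eigenvalue of $P_L$ on $(p,q)$-forms is exactly $\pm\i(p-q)$ under the paper's sign convention for the action of complex structures on forms. One must also check that the commutator relation $[P_I,P_J]=2P_K$ holds with the correct signs, so that the Casimir values $-3$ and $-15$ are right. The factor-of-$2$ normalization matters because Step 2 relies on the two Casimir values being distinct.

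If one prefers to avoid representation theory, there is a direct route. Expand $P_L^2=\sum a_ia_jP_iP_j$; the diagonal terms give $-(a^2+b^2+c^2)=-1$ on $\alpha$. One then shows the anticommutators $\{P_I,P_J\}\alpha$ vanish, for instance by using $\{P_I,P_J\}=\tfrac12\bigl([P_I+P_J]^2-\ldots\bigr)$ together with the Casimir identity. The Casimir argument above is the cleanest way to organize this.
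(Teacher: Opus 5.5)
Your argument is correct, and it takes a genuinely different route from the paper. The paper works directly with the identity \eqref{eq:type_condition}. It expands this identity for $L=aI+bJ+cK$, uses $a^2+b^2+c^2=1$ and the three hypotheses to dispose of the diagonal terms, and then shows that the mixed terms $B_{IJ},B_{JK},B_{KI}$ vanish by three explicit substitutions of the type identities for $I$, $J$ and $K$.

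In your notation, $P_L^2\alpha=-3\alpha+2\bigl(\alpha(LX,LY,Z)+\alpha(LX,Y,LZ)+\alpha(X,LY,LZ)\bigr)$. So your criterion $P_L^2\alpha=-\alpha$ is literally \eqref{eq:type_condition}, and the paper's mixed terms are $B_{IJ}=\tfrac12\{P_I,P_J\}\alpha$; the same-slot contributions cancel because $IJ+JI=0$. Your Casimir step therefore explains structurally why these anticommutators vanish. On $W_{1/2}$ the operators $\sqrt{-1}P_I,\sqrt{-1}P_J,\sqrt{-1}P_K$ act, up to a common sign, as the Pauli matrices, which square to the identity and pairwise anticommute.

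The normalizations you flagged work out. One has $[P_A,P_B]=-P_{[A,B]}$, so $[P_I,P_J]=-2P_K$. With this sign, your operators $\tfrac{1}{2\sqrt{-1}}P_I,\tfrac{1}{2\sqrt{-1}}P_J,\tfrac{1}{2\sqrt{-1}}P_K$ satisfy exactly the angular momentum relations, and the Casimir values $-3$ and $-15$ are right. As a sanity check, $P_I^2=P_J^2=P_K^2=-1$ on $1$-forms, giving $-3$.

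Your approach buys insight and a slightly stronger statement. The single averaged condition $(P_I^2+P_J^2+P_K^2)\alpha=-3\alpha$ already forces $\alpha$ into $W_{1/2}$, and hence gives the type condition for every $L$. It also adapts readily to forms of other degrees. The paper's computation buys complete elementarity, with no representation theory and no sign bookkeeping.

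One small caution: the ``direct route'' you sketch at the end is circular as stated. The identity $(P_I+P_J)^2\alpha=-2\alpha$ is exactly the conclusion for $L=(I+J)/\sqrt2$. It is the Casimir argument, or the paper's explicit substitutions, that breaks this circle, so keep your main argument as the proof.
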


\begin{proof}
Recall that a real $3$-form $\alpha$ is of type $(1,2)+(2,1)$ with respect to a complex structure $L$ if and only if
\begin{equation}\label{eq:type_condition}
\alpha(X,Y,Z)
=
\alpha(LX,LY,Z)
+\alpha(LX,Y,LZ)
+\alpha(X,LY,LZ)
\end{equation}
for all vector fields $X,Y,Z$.

Fix an induced complex structure
\[
L=aI+bJ+cK,
\qquad a^2+b^2+c^2=1.
\]
Expanding the right-hand side of \eqref{eq:type_condition}, we obtain
\begin{align*}
&\alpha(LX,LY,Z)
+\alpha(LX,Y,LZ)
+\alpha(X,LY,LZ)=
a^2A_I+b^2A_J+c^2A_K
+ab\,B_{IJ}
+bc\,B_{JK}
+ca\,B_{KI},
\end{align*}
where
\begin{align*}
A_I
&=
\alpha(IX,IY,Z)
+\alpha(IX,Y,IZ)
+\alpha(X,IY,IZ), \\
A_J
&=
\alpha(JX,JY,Z)
+\alpha(JX,Y,JZ)
+\alpha(X,JY,JZ), \\
A_K
&=
\alpha(KX,KY,Z)
+\alpha(KX,Y,KZ)
+\alpha(X,KY,KZ),
\end{align*}
and
\begin{align*}
B_{IJ} =&\
\alpha(IX,JY,Z)
+\alpha(JX,IY,Z)
+\alpha(IX,Y,JZ)\\
&\ 
+\alpha(JX,Y,IZ)
+\alpha(X,IY,JZ)
+\alpha(X,JY,IZ), \\ 
B_{JK}
=&\ 
\alpha(JX,KY,Z)
+\alpha(KX,JY,Z) 
+\alpha(JX,Y,KZ)\\
&\ 
+\alpha(KX,Y,JZ) 
+\alpha(X,JY,KZ)
+\alpha(X,KY,JZ), \\
B_{KI}
=&\ 
\alpha(KX,IY,Z)
+\alpha(IX,KY,Z) 
+\alpha(KX,Y,IZ)\\
&\ 
+\alpha(IX,Y,KZ) 
+\alpha(X,KY,IZ)
+\alpha(X,IY,KZ).
\end{align*}
Since $\alpha$ is of type $(1,2)+(2,1)$ with respect to $I$, $J$, and $K$, equation \eqref{eq:type_condition} gives
\[
A_I=A_J=A_K=\alpha(X,Y,Z).
\]
Hence 
\begin{align}
&\alpha(LX,LY,Z)
+\alpha(LX,Y,LZ)
+\alpha(X,LY,LZ)=\alpha(X,Y,Z)
+ab\,B_{IJ}
+bc\,B_{JK}
+ca\,B_{KI},
\label{eq:reduced}
\end{align}
where we have used the identity $a^2+b^2+c^2=1$. It remains to prove that
\[
B_{IJ}=B_{JK}=B_{KI}=0.
\]
By cyclic symmetry, it is enough to show that $B_{IJ}=0$.  Using \eqref{eq:type_condition} for the complex structure $I$, evaluated at $(X,JY,Z)$, we obtain
\[
\alpha(IX,JY,Z)
=
-\alpha(X,KY,Z)
-\alpha(X,JY,IZ)
+\alpha(IX,KY,IZ).
\]
Substituting this identity into the expression for $B_{IJ}$ yields
\begin{gather}
\begin{split}
B_{IJ}
=&\ 
-\alpha(X,KY,Z)
+\alpha(IX,KY,IZ)
+\alpha(JX,IY,Z)\\
&\ 
+\alpha(IX,Y,JZ)
+\alpha(JX,Y,IZ)
+\alpha(X,IY,JZ).
\label{eq:bij_step1}
\end{split}
\end{gather}

Next, applying \eqref{eq:type_condition} with respect to $J$, evaluated at $(X,IY,Z)$, we find
\[
\alpha(X,IY,JZ)
=
-\alpha(JX,KY,JZ)
-\alpha(JX,IY,Z)
+\alpha(X,KY,Z).
\]
Substituting this into \eqref{eq:bij_step1}, we obtain
\begin{align}
B_{IJ}
&=
\alpha(IX,KY,IZ)
+\alpha(IX,Y,JZ) 
+\alpha(JX,Y,IZ)
-\alpha(JX,KY,JZ).
\label{eq:bij_step2}
\end{align}
Finally, applying \eqref{eq:type_condition} with respect to $K$, evaluated at $(IX,Y,JZ)$, gives
\[
\alpha(IX,Y,JZ)
=
\alpha(JX,KY,JZ)
-\alpha(JX,Y,IZ)
-\alpha(IX,KY,IZ).
\]
Substituting this identity into \eqref{eq:bij_step2}, we conclude that
\[
B_{IJ}=0.
\]
Therefore \eqref{eq:reduced} reduces to
\[
\alpha(LX,LY,Z)
+\alpha(LX,Y,LZ)
+\alpha(X,LY,LZ)
=
\alpha(X,Y,Z),
\]
which is precisely the condition that $\alpha$ be of type $(1,2)+(2,1)$ with respect to $L$.
\end{proof}

\begin{prop} \label{p:Phiprops}
Let $(g, I, J, K)$ be an HKT structure. Then
\begin{enumerate} [label={(\arabic*)}]
    \item $\Phi(g) \in A^{1,1}$,
    \item $d \Phi(g) \in \bigcap\limits_{L \in S^2} \left( \Omega^{2,1}_L (M)\oplus \Omega^{1,2}_L (M) \right)$.
\end{enumerate}
\end{prop}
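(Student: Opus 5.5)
For item (1) I will use Proposition \ref{p:PhiasChern}(1), which gives $\Phi(g) = -\tfrac14 \lambda^{\omega} + \tfrac12 Q^2$. Lemma \ref{l:trdHalgebraic} shows that $\lambda^{\omega} \in A^{1,1}$, and Lemma \ref{l:Qalgebraic} shows that $Q^2 \in A^{1,1}$. Since $A^{1,1}$ is a linear space, item (1) follows at once. Alternatively, one can argue directly from the definition \eqref{f:phi}. By \eqref{eqn:rhoChern}, $\rho = dI\theta$ is of type $(1,1)$ for $I$. Hence $\rho - J\rho$ is $I$-invariant, because $I$ and $J$ anticommute, and it is $J$-anti-invariant by construction. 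Being both $I$-invariant and $J$-anti-invariant, it is also $K$-anti-invariant.

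For item (2), I will rewrite $\Phi(g)$ in terms of the HKT form and differentiate. By \eqref{eqn:del_Jalpha}, the $(2,0)$-form associated with $\Phi(g)$ under the correspondence $\omega \mapsto \Omega$ is $-\partial_J \alpha$. Since $\alpha$ is $\partial$-closed (\cite{Ver02}) and $\partial\partial_J = -\partial_J\partial$, we get $\partial(\partial_J\alpha) = -\partial_J\partial\alpha = 0$. So the $(2,0)$-form attached to the Salamon form $\Phi(g)$ is $\partial$-closed, and it also satisfies $\partial_J$-closedness since $\partial_J^2=0$.

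The key step is a linear-algebraic statement: for a Salamon form $\beta \in A^{1,1}$ with associated $\Omega_\beta = \tfrac12(\beta_J + \i\beta_K)$, the condition $\partial \Omega_\beta = 0$ is equivalent to $d\beta \in \bigcap_{L\in\{I,J,K\}}\Omega^{(2,1)+(1,2)}_L$. This is the content of the equivalence (1)$\Leftrightarrow$(3) in Definition \ref{defn:HKT}, proved in \cite{grantcharov2000}. That proof uses only the pointwise algebra relating $d\beta$, $d\beta_J$, $d\beta_K$ and never uses positivity of $\beta$, so it applies verbatim to any $\beta \in A^{1,1}$.

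Concretely, I would check the following.
\begin{itemize}
\item $\partial\Omega_\beta = 0$ means that the $(3,0)+(2,1)$ parts of $d\beta_J + \i d\beta_K$ vanish with respect to $I$.
\item Combined with $I$-invariance of $\beta$, this yields the vanishing of the $(3,0)+(0,3)$ components of $d\beta$ with respect to $J$ and $K$, alongside the automatic $I$-statement.
\end{itemize}
If citing \cite{grantcharov2000} for non-positive $\beta$ feels insufficient, I could instead reduce to the positive case. Write $\Phi(g) = (\Phi(g) + C\omega) - C\omega$ with $C$ large, locally or on compact sets, so that $\Phi(g) + C\omega$ is positive. Both $\omega$ and $\Phi(g) + C\omega$ then have $\partial$-closed associated forms, since $\partial\Omega = 0$ by the HKT condition. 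Hence both are HKT fundamental forms, and their differentials have the required type. Linearity gives the result for $\Phi(g)$. This trick is clean and avoids redoing the algebra, so I would use it.

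Finally, Lemma \ref{l:3formlinearalgebra} upgrades the type condition from $I, J, K$ to all $L \in S^2$. I expect the main obstacle to be the sign and convention bookkeeping in \eqref{eqn:del_Jalpha}, namely confirming that the correspondence $\beta \mapsto \Omega_\beta$ indeed sends $\Phi(g)$ to $-\partial_J\alpha$.
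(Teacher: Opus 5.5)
Your proposal is correct. Item (1) follows the paper exactly. Your alternative direct argument is also fine: the Chern--Ricci form is always of type $(1,1)$ for $I$, whatever formula one uses for it.

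For item (2) you take a genuinely different route.

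\textbf{The paper's route.} It stays on the curvature side. It uses Proposition \ref{p:PhiasChern}(3) to write $-\Phi(g)=S^B-\rho+\tfrac12(H_I^2+JH_I^2)$ and discards $\rho$ because it is closed. It then observes that $S^B$ (because the Bismut curvature takes values in $\Sp(n)$) and $H_I^2+JH_I^2$ (by a short quaternionic computation) are of type $(1,1)$ for $J$ and $K$. Hence their differentials have the required type, and Lemma \ref{l:3formlinearalgebra} extends this to all $L\in S^2$.

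\textbf{Your route.} You work on the $(2,0)$-form side. By \eqref{eqn:del_Jalpha}, the form attached to $\Phi(g)$ is $-\partial_J\alpha$. This is $\partial$-closed because $\partial\alpha=0$ and $\partial\partial_J=-\partial_J\partial$. The shift $\Phi(g)=(\Phi(g)+C\omega)-C\omega$ then lets you apply the implication (3)$\Rightarrow$(1) of Definition \ref{defn:HKT} to genuine hyperHermitian metrics on relatively compact sets. This is legitimate for three reasons:
\begin{itemize}
\item the correspondence $\beta\mapsto\Omega_\beta$ is linear;
\item $\Phi(g)+C\omega$ is exactly the fundamental form of the metric $(\Phi(g)+C\omega)(I\cdot,\cdot)$, by $I$-invariance;
\item the type condition is pointwise.
\end{itemize}

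\textbf{Comparison.} Your argument is shorter and more conceptual. It shows why $\Phi$ is tangent to the space of HKT structures: the HKT condition is the linear condition $\partial\Omega=0$, and $-\partial_J\alpha$ visibly satisfies it. This is the same viewpoint as the scalar reduction of \eqref{f:Omegaflow}. Since you land directly in characterization (1), which already quantifies over all $L\in S^2$, Lemma \ref{l:3formlinearalgebra} is redundant for you.

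The price is that your argument rests on imported results:
\begin{itemize}
\item Verbitsky's closedness of $\alpha$;
\item the identification \eqref{eqn:del_Jalpha};
\item the equivalence of the HKT characterizations in Definition \ref{defn:HKT}.
\end{itemize}
The paper's proof instead verifies the Salamon-type condition directly from the curvature identities it has just developed. These are the same identities that exhibit $\Phi=S-Q$ as a Hermitian curvature flow, and the proof never invokes the equivalence of the HKT definitions.
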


\begin{proof}
The first claim is immediate from Lemmas \ref{l:trdHalgebraic} and \ref{l:Qalgebraic}.  We verify the second claim.  First note that by using Lemma \ref{l:3formlinearalgebra}, it suffices to show that $d \Phi \in \Omega^{2,1 + 1,2}_L$ for $L = I, J, K$.  The claim is obvious for $I$ from item (1).

Turning to the complex structure $J$, we observe using item (3) in Proposition \eqref{p:PhiasChern} that
\begin{align*}
-\Phi(g) = \tfrac{1}{4}\lambda^{\omega}
-\tfrac12\bigl(H_I^2-JH_I^2\bigr)
=
S^B-\rho
+\tfrac12\bigl(H_I^2+JH_I^2\bigr).
\end{align*}
Since $\rho$ is closed, we obtain
\[
d\left( \tfrac{1}{4}
\lambda^{\omega}
-\tfrac12\bigl(H_I^2-JH_I^2\bigr)
\right)
=
d\left(
S^B+\tfrac12(H_I^2+JH_I^2)
\right).
\]
Using that the Bismut connection preserves every complex structure, it follows that $S^B \in \Omega^{1,1}_J$ (see item \ref{item:6}).  Furthermore, it is clear by construction that 
$\tfrac12(H_I^2+JH_I^2) \in \Omega^{1,1}_J$.  Hence $S^B+\tfrac12(H_I^2+JH_I^2) \in \Omega^{1,1}_J$, and thus its exterior derivite lies in $\Omega^{2,1 + 1,2}_J$, as claimed.

The argument is similar for the complex structure $K$.  The only missing point is to show that the torsion term $H_I^2+JH_I^2 \in \Omega^{1,1}_K$.  We compute using the quaternion relations and that $H_I^2 \in \Omega^{1,1}_I$,
\begin{align*}
(H_I^2& +JH_I^2)(X,Y)
-(H_I^2+JH_I^2)(KX,KY)\\
=&\ 
H_I^2(X,Y)
+H_I^2(JX,JY) -H_I^2(KX,KY)
-H_I^2(JKX,JKY)\\
=&\ H_I^2(X, Y) + H_I^2(JX, JY) - H_I^2 (IJ X, IJ Y) - H_I^2(IX, IY)\\
=&\ 0,
\end{align*}
as claimed.
\end{proof}

\subsection{Proof of Theorem \ref{t:mainthm1}}
\begin{proof}[Proof of Theorem \ref{t:mainthm1}]

As hyperHermitian curvature flow is a special case of Hermitian curvature flow as defined in \cite{HCF}, items (1) and (2) follow from \cite[Theorem 1.1]{HCF}.  To show item (3), we observe that the general short-time existence theory for quasilinear parabolic systems can be set up for equation (\ref{f:Omegaflow}) in an appropriate Banach space of HKT forms, noting further that the operator defined in Definition \eqref{f:phi} is a tangent vector to the space of HKT metrics.  As this operator is strictly elliptic for HKT metrics by Proposition \ref{p:PhiasChern}, it follows there exists a unique solution to to (\ref{f:Omegaflow}).  Alternatively, we show in \S \ref{ss:scalarreduction} below that the solution to (\ref{f:Omegaflow}) can be reduced to a scalar parabolic Monge-Amp\`ere type flow which admits unique short-time solutions.  Noting that solutions to (\ref{f:Omegaflow}) induce solutions to hyperHermitian curvature flow by Proposition \ref{p:PhiasChern}, and such solutions are unique, items (3a) and (3b) follow.  Item (3c) then follows directly from Proposition \ref{p:PhiasChern}.
\end{proof}

\subsection{Scalar curvature monotonicity} 

In this subsection we prove the monotonicity formula for Chern scalar curvature.  We first record a preliminary lemma computing an evolution equation for the Lee form.  The formula suffices for our purposes but is not written as a heat-type equation, such a formulation following from further curvature identities for HKT structures.
\begin{lem} \label{l:leeform}
Given $g_t$ a solution to the HKT Ricci flow, 
\begin{align}
\partial_t \theta&= -\tfrac 12 ds.
\end{align}
\end{lem}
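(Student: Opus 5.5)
Along the HKT Ricci flow the HKT form evolves by $\partial_t \Omega = \partial_J \alpha$, by (\ref{f:Omegaflow}) in Theorem \ref{t:mainthm1}. The plan is to compute $\partial_t \alpha$, using that $\alpha$ is determined by $\Omega^n$ through (\ref{f:alphadef}), and then recover $\partial_t\theta$ from (\ref{eqn:leeform}). The flow stays HKT, so $\theta = \alpha+\bar\alpha$ holds for all $t$. It therefore suffices to show $\partial_t \alpha = -\tfrac12 \partial s$. The real part then gives
\begin{align*}
\partial_t\theta = -\tfrac12(\partial s + \bar\partial s) = -\tfrac12 ds,
\end{align*}
using that $s$ is real.

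\textbf{Step 1: evolution of $\bar\Omega^n$.} Differentiating,
\begin{align*}
\partial_t \bar\Omega^n = n\, \partial_t\bar\Omega\wedge\bar\Omega^{n-1} = n\,\overline{\partial_J\alpha}\wedge\bar\Omega^{n-1}.
\end{align*}
Since $\bar\Omega^n$ spans $\Omega^{0,2n}_I$, this equals $f\,\bar\Omega^n$ for a function $f$. Conjugating the definition (\ref{eqn:tr_Omega}) gives $f = \overline{\tr_\Omega(\partial_J\alpha)}$. By (\ref{eqn:cherscalar}), $\tr_\Omega(\partial_J\alpha) = -\tfrac12 s$, which is real, so $f = -\tfrac12 s$ and
\begin{align*}
\partial_t\bar\Omega^n = -\tfrac12 s\,\bar\Omega^n.
\end{align*}
Equivalently, $\bar\Omega^n_t = e^{F_t}\bar\Omega^n_0$ with $\partial_t F = -\tfrac12 s$.

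\textbf{Step 2: transgression.} Lemma \ref{l:transgression} is stated for $e^f\Xi$ with $\Xi\in\Omega^{2n,0}$. Its proof only uses $\partial(e^f\bar\Xi) = (\partial f + \alpha_\Xi)\wedge e^f\bar\Xi$, and this holds verbatim for our $\bar\Omega^n$ and real $F$. This gives $\alpha_t = \alpha_0 + \partial F_t$, hence $\partial_t\alpha = \partial(\partial_t F) = -\tfrac12\partial s$. Alternatively, I would differentiate (\ref{f:alphadef}) directly: $\partial(f\bar\Omega^n) = (\partial f + f\alpha)\wedge\bar\Omega^n$, which equals $\partial_t(\alpha\wedge\bar\Omega^n) = \partial_t\alpha\wedge\bar\Omega^n + f\,\alpha\wedge\bar\Omega^n$. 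Since wedging with $\bar\Omega^n$ is injective on $(1,0)$-forms, $\partial_t\alpha = \partial f$.

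\textbf{Main obstacle.} The main difficulty is sign and normalization bookkeeping in Step 1. Three points need checking. First, the trace $\tr_\Omega$ from (\ref{eqn:tr_Omega}) must match the coefficient $f$, including under conjugation. Second, the sign convention in (\ref{eqn:cherscalar}) relative to (\ref{eqn:del_Jalpha}) must be used consistently. Third, $\partial_J\alpha$ must indeed be $(2,0)$, so that the trace formula applies. I would verify these in the standard coordinates of Remark \ref{rmk:standardcoordinates}, where $\Omega = \sum_i dz^{2i+1}\wedge dz^{2i}$ and the trace is a sum of diagonal coefficients.
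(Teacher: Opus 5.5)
Your proposal is correct and is essentially the paper's proof. The paper differentiates $\partial\bar\Omega^n = \alpha\wedge\bar\Omega^n$ in time, writes $\partial_t\bar\Omega^n = \overline{\tr_\Omega(\partial_J\alpha)}\,\bar\Omega^n$, and compares the two expansions to obtain $\partial_t\alpha = \partial\bigl(\overline{\tr_\Omega(\partial_J\alpha)}\bigr) = -\tfrac12\partial s$ via (\ref{eqn:cherscalar}); this is exactly your ``alternative'' in Step 2, and your primary route through Lemma \ref{l:transgression} (integrating to $\bar\Omega^n_t = e^{F_t}\bar\Omega^n_0$) repackages the same computation.
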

\begin{proof}
Differentiating equation (\ref{f:alphadef}) and using (\ref{f:Omegaflow}) we obtain
\begin{align*}
\partial_t(\partial \bar\Omega^n)
    &= \partial_t(\alpha \wedge \bar\Omega^n) \\
    &= (\partial_t\alpha)\wedge \bar\Omega^n
       + \alpha \wedge \partial_t(\bar\Omega^n)\\
    &= (\partial_t\alpha)\wedge \bar\Omega^n
       + n\,\alpha \wedge \overline{\partial_J\alpha}
         \wedge \bar\Omega^{n-1}\\
         &=
    \left(
        \partial_t\alpha
        +
        \overline{\operatorname{tr}_{\Omega}(\partial_J\alpha)}\,\alpha
    \right)
    \wedge \bar\Omega^n,
\end{align*}
where in the final line we used \eqref{eqn:tr_Omega}.  On the other hand, again using (\ref{f:Omegaflow}) and \eqref{eqn:tr_Omega},
\begin{align*}
\partial_t(\partial \bar\Omega^n)
    &= \partial\bigl(\partial_t \bar\Omega^n\bigr) \\
    &= \partial\bigl(
        n\,\overline{\partial_J\alpha}
        \wedge \bar\Omega^{n-1}
      \bigr) \\
    &= \partial\bigl(
       \overline{ \operatorname{tr}_{\Omega}(\partial_J\alpha)}
        \,\bar\Omega^n
      \bigr) \\
    &= \partial\!\left(
         \overline{ \operatorname{tr}_{\Omega}(\partial_J\alpha)}
      \right)
      \wedge \bar\Omega^n
      +
       \overline{ \operatorname{tr}_{\Omega}(\partial_J\alpha)}\,
      \partial\bar\Omega^n \\
    &=  \left(
        \partial\!\left(
             \overline{ \operatorname{tr}_{\Omega}(\partial_J\alpha)}
        \right)
        +
         \overline{ \operatorname{tr}_{\Omega}(\partial_J\alpha)}\,\alpha
      \right)
      \wedge \bar\Omega^n.
\end{align*}
Comparing these two expressions above yields
\begin{equation*}
  \partial_t \alpha=  \partial\!\left(
             \overline{ \operatorname{tr}_{\Omega}(\partial_J\alpha)}
        \right) . 
\end{equation*}
The result follows from equation (\ref{eqn:cherscalar}), using that $\theta = \alpha + \bar{\alpha}$.
\end{proof}

We are now ready to prove the scalar curvature montonicity.  In the theorem below we use the notation
\begin{align*}
    \square = \dt - \Delta_{\omega_t}
\end{align*}
for the heat operator associated to the time-dependent Chern connection.

\begin{thm} \label{t:scalarthm_bulk}
 Given $g_t$ a solution to the HKT Ricci flow, 
\begin{align} \label{eq:evolution_s}
\square s=\tfrac12 |\Phi(g)|^2.
\end{align}
Assuming $M$ is compact, 
\begin{enumerate} [label={(\arabic*)}]
    \item $\inf_{M \times \{t\}} s \geq \inf_{M \times \{0\}} s$,
    \item $\inf_{M \times \{t\}} s \geq - \tfrac{2n}{t}$,
    \item If $\inf_{M \times \{0\}} s = \sigma > 0$, then the maximal smooth existence time of the flow is $T \leq \tfrac{2n}{\sigma}$.
\end{enumerate}
\end{thm}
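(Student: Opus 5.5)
The plan is to derive the evolution of $s$ from Lemma~\ref{l:leeform} together with the scalar formula \eqref{eqn:schern}, $s = d^*\theta + |\theta|^2$. A cleaner route is through the HKT form. Since $s = -2\,\tr_\Omega(\partial_J\alpha)$ by \eqref{eqn:cherscalar}, we differentiate in time:
\[
\dt s = -2\,\dt\bigl(\tr_\Omega \partial_J\alpha\bigr) = -2\,\tr_\Omega\bigl(\partial_J \dt\alpha\bigr) - 2\,\bigl(\dt \tr_\Omega\bigr)(\partial_J\alpha).
\]
The proof of Lemma~\ref{l:leeform} gives $\dt\alpha = \partial\bigl(\overline{\tr_\Omega \partial_J\alpha}\bigr) = -\tfrac12 \partial s$, with $s$ real. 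Hence $\partial_J\dt\alpha = -\tfrac12\partial_J\partial s = \tfrac12 \partial\partial_J s$. By Remark~\ref{rmk:standardcoordinates}, $\tr_\Omega\partial\partial_J s = -\Delta_\omega s$, so the first term equals $\Delta_\omega s$.

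For the second term we vary the trace. Since $\dt\Omega = \partial_J\alpha$, we have $\dt(\tr_\Omega\beta) = -\langle \partial_J\alpha, \beta\rangle_\Omega$ for fixed $\beta$, where the pairing is the natural one obtained from the inverse of $\Omega$ (formally, $\dt \Omega^{-1} = -\Omega^{-1}(\dt\Omega)\Omega^{-1}$). This gives $-2(\dt\tr_\Omega)(\partial_J\alpha) = 2|\partial_J\alpha|^2_\Omega$. We then convert to the Riemannian norm using \eqref{eqn:del_Jalpha}, which identifies $-\partial_J\alpha$ with the $(2,0)$-form built from $\Phi(g)$ via $J,K$, exactly as $\Omega$ is built from $\omega$. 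Working in standard coordinates at a point, $|\partial_J\alpha|^2_\Omega = \tfrac14|\Phi|^2$. This yields $\square s = \tfrac12|\Phi|^2$.

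The main obstacle is the constant in this last step. I would pin it down by testing on $\Phi = \lambda\omega$: there $\partial_J\alpha = -\lambda\Omega$, so the trace variation gives $2\lambda^2 n$, while $|\omega|^2 = 4n$ under the paper's conventions, which confirms the factor $\tfrac12$.

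The three consequences follow by the maximum principle on compact $M$. For (1), $\square s \ge 0$ immediately gives that $\inf s$ is nondecreasing. For (2) and (3) we need a lower bound on $|\Phi|^2$. By Cauchy--Schwarz, $|\Phi|^2 \ge (\tr_\omega \Phi)^2/(\text{rank})$. Tracing $\Phi = S - Q$ with Lemma~\ref{l:Q_1}(2) gives $\tr \Phi = s$ (up to normalization), and with the normalization above this yields $\tfrac12|\Phi|^2 \ge s^2/(2n)$. Thus $\square s \ge s^2/(2n)$.

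Comparing with the ODE $\dot\sigma = \sigma^2/(2n)$ then finishes both statements. For (2), $\sigma(t) = -2n/t$ starts from $-\infty$, and comparison gives $s \ge -2n/t$. For (3), $\sigma(t) = \sigma_0/(1 - \sigma_0 t/2n)$ blows up at $t = 2n/\sigma_0$. Since $s$ is bounded on compact time slices of a smooth flow, this forces $T \le 2n/\sigma$.
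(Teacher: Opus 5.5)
Your proof is correct and is essentially the paper's argument. The paper differentiates $s=\tr_\omega\Phi(g)$ using $\rho=dI\theta$ and $\dt\theta=-\tfrac12 ds$ (Lemma~\ref{l:leeform}); you differentiate $s=-2\tr_\Omega\partial_J\alpha$ using $\dt\alpha=-\tfrac12\del s$, and the trace-part bound $\tfrac12|\Phi|^2\geq s^2/(2n)$ and the ODE comparison are the same.

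The paper's version is slightly cleaner at one point: varying $\omega^{-1}$ with $\dt\omega=-\Phi$ gives $\tfrac12|\Phi(g)|^2$ directly. Your route instead needs $|\partial_J\alpha|^2_\Omega=\tfrac14|\Phi|^2$ for all $\Phi$, and testing on $\Phi=\lambda\omega$ only fixes the constant on the pure-trace part when $n\geq 2$. The identity does hold: $\tr_\Omega\beta=\tfrac12\tr_\omega\Phi_\beta$ for every hyperHermitian metric, where $\Phi_\beta\in A^{1,1}$ corresponds to $\beta$ as $\omega$ corresponds to $\Omega$, and this turns your quadratic term into exactly the paper's.
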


\begin{proof} We first show equation \eqref{eq:evolution_s}. By definition of $\Phi(g)$ we may express
\[
s=\frac{1}{2}\sum_{i=1}^{4n} \rho (Ie_i,e_i)=\tr_{\omega}\rho=\tr_{\omega}(\Phi (g)).
\]
Therefore by Lemma \ref{l:leeform}:
\begin{align*}
\partial_t s&=- \tr_{\omega} \left( \partial_t \omega \,  \omega^{-1} \,  \Phi(g) \right) + \tr_{\omega} (\partial_t \Phi (g))\\
&= \tfrac12 |\Phi(g)|^2+\tr_{\omega}(dI \partial_t \theta)\\
&=  \tfrac12 |\Phi(g)|^2-\tfrac12 \tr_{\omega}(dI ds)\\
&=  \tfrac12 |\Phi(g)|^2+\Delta_{\omega} (s),
\end{align*}
where we used that $\rho=dI\theta$.

Now assuming $M$ is compact, the first item follows by a direct application of the maximum principle to \eqref{eq:evolution_s}.  For the second and third items we first observe the algebraic fact using \eqref{eqn:chern_scalar} that
\begin{align*}
    \tfrac 12 \brs{\Phi(g)}^2 =&\tfrac 12 \brs{\mathring{\Phi(g)} + \tfrac{s}{2n} \omega }^2 = \tfrac12 \brs{\mathring{\Phi(g)}}^2 + \tfrac{1}{2n} s^2 \geq \tfrac{1}{2n} s^2.
\end{align*}
Using this and maximum principle comparison against the ODE $\frac{d f}{dt} = \tfrac{1}{2n} f^2$ items (2) and (3) follow directly
\end{proof}

\section{Global existence results}

In this section we address global existence results for HKT Ricci flow.  We first formulate an existence conjecture based on the positive cone for the hyperHermitian $(2,0)$-forms in  quaternionic Bott-Chern cohomology, in analogy with the Tian-Zhang theorem for K\"ahler-Ricci flow \cite{TianZhang}.  We show that in this positive cone the flow reduces to a parabolic quaternionic Monge-Amp\`ere type equation, and use this to prove a conditional resolution of the cone conjecture (Theorem \ref{t:coneconjthm}).  We then prove the global existence on four-manifolds (Theorem \ref{t:4dthm}).

\subsection{Maximal existence time conjecture and scalar reduction} \label{ss:scalarreduction}

In the following, given an hyperHermitian structure $(I,J,K,g)$, we will set $\Phi(g):=-\partial_J \alpha$.
\begin{defn} \label{d:positivecone} Let $(M^{4n}, I, J, K)$ be a hypercomplex manifold.  Define the \emph{HKT cone} by
\begin{align*}
    \mathcal P = \{ [\alpha]_{qBC} \in H^{2,0}_{qBC}\ |\ \exists\ \Omega \in [\alpha]_{qBC},\ \Omega > 0 \}.
\end{align*}
\end{defn}

\begin{conj} \label{c:coneconj} Let $(M^{4n}, I, J, K, g_0, \Omega_0)$ be a compact HKT manifold.  Let
\begin{align*}
    \tau^* := \sup\ \{ t \geq 0\ |\ [\Omega_0]_{qBC} - t c_1^{qBC} \in \mathcal P \}
\end{align*}
The maximal smooth solution to (\ref{f:HKTflow}) exists on $[0, \tau^*)$.
\end{conj}

We show that this conjecture reduces to a parabolic quaternionic Monge-Amp\`ere flow against a moving background.  This is a standard argument adapted from \cite{TianZhang}.  Fix an initial HKT metric $\Omega_0$, which determines a relevant $\tau^*$, and choose $\tau < \tau^*$.  By definition of $\tau^*$ we know that
\begin{align*}
    [\Omega_0]_{qBC} - \tau c_1^{qBC} \in \mathcal P.
\end{align*}
Thus there exists an HKT metric $\til{\Omega}_{\tau} \in [\Omega_0] - \tau c_1^{qBC}$. Since $c_1^{qBC}=[-\partial_J \alpha_0]$ and $\Phi(\Omega_0)=-\partial_J \alpha_0$, $c_1^{qBC}=[\Phi(\Omega_0)]$. Therefore there exists $f \in C^{\infty}(M)$ such that
\begin{align*}
    \til{\Omega}_{\tau} = \Omega_0 - \tau \Phi(\Omega_0) + \del \del_J f = \Omega_0 - \tau \Phi(e^{\hat{f}} \Omega_0),
\end{align*}
where $\hat{f}=-\frac{f}{\tau n}$, and the last equality follows by Lemma \ref{l:transgression} .  Now we define a one-parameter family of background HKT metrics
\begin{align*}
    \hat{\Omega}_t := \frac{t}{\tau} \til{\Omega}_{\tau} + \frac{\tau - t}{\tau} \Omega_0,
\end{align*}
which is indeed positive definite for $0 \leq t \leq \tau$ by convexity of the space of positive $(2,0)$ forms.  An elementary computation shows that 
\begin{align*}
    [\hat{\Omega}_t]_{qBC} = [\Omega_0]_{qBC} - t c_1^{qBC},
\end{align*}
so that $\hat{\Omega}_t$ serves as a family of background metrics in the cohomology class of $\Omega_t$, where $\Omega_t$ is a solution of the HKT Ricci flow.
We now let $\varphi_t$ be the unique solution to
\begin{align} \label{f:PqMA}
    \dt \varphi = \log \frac{ \left( \hat{\Omega}_t - \del \del_J \varphi \right)^n}{ e^{n\hat{f}} \Omega_0^n}.
\end{align}
Using Lemma \ref{l:transgression} we observe 
\begin{align*}
    \dt (\hat{\Omega}_t - \del \del_J \phi_t) = -\Phi(e^{\hat{f}} \Omega_0) - \del \del_J \log \frac{ \left( \hat{\Omega}_t - \del \del_J \varphi \right)^n}{ e^{n\hat{f}} \Omega_0^n} = - \Phi( \hat{\Omega}_t - \del \del_J\varphi_t),
\end{align*}
hence $\hat{\Omega}_t - \del \del_J \varphi_t$ is the unique solution to (\ref{f:Omegaflow}) with initial condition $\Omega_0$.

\subsection{Proof of Theorem \ref{t:coneconjthm}}

To prove Theorem \ref{t:coneconjthm} we develop a priori estimates for the scalar reduction similar to the classic estimates of the parabolic complex Monge-Amp\`ere equation.  A further key role is played by a Schwarz lemma style estimate, relying on the Hermitian curvature flow interpretation of HKT Ricci flow.

For all of the estimates to follow we fix the setup of the previous subsection.  In particular we assume the existence of a $\tau < \tau^*$, a moving background $\hat{\Omega}_t$, and a corresponding unique solution of (\ref{f:PqMA}) with initial data $\phi_0=0$, which a priori exists smoothly for times $t$ in some subinterval of $[0, \tau]$.  In the following: $\Omega_{\phi}:= \hat \Omega-\partial \partial_J \phi$. 

\begin{prop} \label{p:C0estimate2} There exists a constant $C = C(\Omega_0, \tau)$ such that
\begin{align*}
    \sup_{M \times \{t\}} \brs{\varphi} \leq C t.
\end{align*}
\begin{proof} This is a standard maximum principle argument using equation (\ref{f:PqMA}) directly.
\end{proof}
\end{prop}

\begin{lem} \label{l:phidotev} One has
\begin{align*}
    \square \varphi =&\ \dot{\varphi} - n + \tr_{\omega_{\varphi}} \hat{\omega}\\
    \square \dot{\varphi} =&\  \tr_{\omega_{\varphi}} \Psi,
\end{align*}
where $\Psi$ is a fixed background tensor depending on $\Omega, \tau$.
\begin{proof}
The first item is a tautology after observing that
\begin{align*}
\Delta_{\omega_{\varphi}} \varphi = -\tr_{\Omega_{\varphi}} \del \del_J \varphi = \tr_{\Omega_{\varphi}} \left( \Omega_{\varphi} - \hat{\Omega} \right) = n -  \tr_{\Omega_{\varphi}} \hat{\Omega}.
\end{align*}
    Taking the time derivative of (\ref{f:PqMA}) and a standard computation yield
    \begin{align*}
        \dt \dt \varphi =&\ \tr_{\Omega_{\varphi}} \left( \tfrac{1}{\tau} \left( \til{\Omega}_{\tau} - \Omega_0 \right) - \del \del_J \dot{\varphi} \right) = \Delta_{\omega_{\varphi}} \dot{\varphi} + \tr_{\omega_{\varphi}} \Psi,
    \end{align*}
    where $\Psi$ is defined by the equality.
\end{proof}
\end{lem}

\begin{prop} \label{p:phidotestimate} For any smooth existence time $t$ there exists a constant $C = C(\Omega_0, t)$ such that
\begin{align*}
    \sup_{M \times \{t\}} \brs{\dot{\varphi}} \leq C.
\end{align*}
\begin{proof} Let $F := \dot{\varphi} - A \varphi$, where $A > 0$ is a constant to be determined.  Using Lemma \ref{l:phidotev} we obtain
\begin{align*}
    \square F =&\ \tr_{\omega_{\varphi}} \Psi - A \left( \dot{\varphi} - n + \tr_{\omega_{\varphi}} \hat{\omega} \right)\\
    =&\ \tr_{\omega_{\varphi}} \left(\Psi - A \hat{\omega} \right) - A \left( F + A \varphi \right) + A n\\
    \leq&\ - A F + C,
\end{align*}
where the last line follows by choosing $A$ large with respect to $\Psi$ and the estimate of Proposition \ref{p:C0estimate2}.  We obtain a time-dependent a priori upper bound for $F$ from the maximum principle, from which an upper bound for $\dot{\varphi}$ follows from Proposition \ref{p:C0estimate2}.  The argument for the lower bound on $\dot{\varphi}$ is analogous.
\end{proof}
\end{prop}

\begin{lem} \label{l:schwarz} Let $(M^{4n}, g_t, I, J, K)$ be a solution to HKT Ricci flow, and fix $\hat{g}$ a background Hermitian metric on $M$.  Then
\begin{align*}
    \square \tr_{\hat{\omega}} \omega_{\varphi} =&\ - \brs{\Upsilon(\omega_{\varphi},\hat{\omega})}^2_{\omega_{\varphi}^{-1},\hat{\omega}^{-1},\omega_{\varphi}} + \tr_{\hat{\omega}} Q - \omega_{\varphi}^{-1} \star \omega_{\varphi} \star R^C_{\hat{\omega}},\\
    \square \log \tr_{\hat{\omega}} \omega_{\varphi} \leq&\ C \brs{T}^2 + C \tr_{\omega_{\varphi}} \hat{\omega},
\end{align*}
where $\Upsilon(\omega_{\varphi}, \hat{\omega}) =\N^C_{\omega_{\varphi}} - \N^C_{\hat{\omega}}$.
\begin{proof} Using the expression of HKT Ricci flow as Hermitian curvature flow from Theorem \ref{t:mainthm1}, the evolution equation follows from a standard Schwarz lemma computation.  This was done for pluriclosed flow in \cite[Lemma 6.8]{ASnondeg}.  The computation there holds for arbitrary Hermitian curvature flows, yielding the claimed evolution.  The inequality for $\log \tr_{\hat{\omega}} \omega_{\varphi}$ follows as in \cite[Lemma 6.9]{ASnondeg}, after observing an elementary estimate $(\tr_{\hat{\omega}} \omega_{\varphi})^{-1} \tr_{\hat{\omega}} Q \leq C \brs{T}^2_{\omega_{\varphi}}$ for some constant $C$.
\end{proof}
\end{lem}

\begin{thm} \label{t:coneconjthmbulk} (cf. Theorem \ref{t:coneconjthm}) Let $(M^{4n}, I, J, K, g_0)$ be a compact HKT manifold.  Suppose the solution to HKT Ricci flow with this initial data exists on $[0, \tau)$ where $\tau < \tau^*$ and either
\begin{enumerate}[label={(\arabic*)}]
    \item $\sup_{M \times [0, \tau)} \tr_{g_{0}} g < \infty$,
    \item $\sup_{M \times [0, \tau)} \brs{T} < \infty$.
\end{enumerate}
Then the flow extends smoothly past time $\tau$.
\end{thm}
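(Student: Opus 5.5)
The plan is to work with the scalar reduction \eqref{f:PqMA}. Since $\tau < \tau^*$, we can pick $\tau' \in (\tau, \tau^*)$ and build the moving background $\hat\Omega_t$ on $[0,\tau']$, so $\hat\Omega_t$ stays uniformly positive on $[0,\tau]$. Propositions \ref{p:C0estimate2} and \ref{p:phidotestimate} then give uniform bounds on $|\varphi|$ and $|\dot\varphi|$ on $M \times [0,\tau)$. Rewriting \eqref{f:PqMA} as $\Omega_\varphi^n = e^{\dot\varphi + n\hat f}\Omega_0^n$, we get two-sided control of the volume form of $g_t$ relative to $g_0$.

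\textbf{Case (1).} The bound on $\tr_{g_0} g$ is an upper bound on the eigenvalues of $g_t$ relative to $g_0$. Combined with the volume bound, it also bounds the eigenvalues from below, so $g_t$ is uniformly equivalent to $g_0$ on $[0,\tau)$. Equivalently, $\Omega_\varphi$ is uniformly equivalent to $\Omega_0$, which is a $C^{1,1}$-type (Laplacian) bound on $\varphi$ in the quaternionic sense. Next we need a higher-order estimate. I would try an Evans--Krylov-type $C^{2,\alpha}$ estimate for the parabolic quaternionic Monge--Amp\`ere equation, as developed by Alesker and Verbitsky and used in \cite{BGV}. The equation is concave in $\partial\partial_J\varphi$ and uniformly parabolic once $\Omega_\varphi$ is equivalent to $\Omega_0$, so this estimate gives $C^{2,\alpha}$ control in space-time. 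Bootstrapping by differentiating the equation and applying parabolic Schauder theory then gives all higher derivatives. Hence $\Omega_t$ converges smoothly as $t \to \tau$ to a positive HKT form, and restarting the flow via Theorem \ref{t:mainthm1}(1) extends it past $\tau$. Alternatively, once we have $C^\infty$ bounds on $g$, the curvature and torsion quantities in Theorem \ref{t:mainthm1}(2) are bounded, so $\tau$ is not singular.

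\textbf{Case (2).} The plan is to reduce to Case (1) using Lemma \ref{l:schwarz} with $\hat g = \hat g_t$. Strictly, the background is time-dependent, but its time derivative is a fixed smooth tensor, which only adds a term bounded by $C \tr_{\hat\omega}\omega_\varphi$. In $\log$ form this is harmless, or we can simply take $\hat g = g_0$ directly. Lemma \ref{l:schwarz} together with the assumed bound on $|T|$ gives
\begin{align*}
\square \log \tr_{\hat\omega}\omega_\varphi \le C + C\tr_{\omega_\varphi}\hat\omega.
\end{align*}
To absorb the last term, consider $G = \log\tr_{\hat\omega}\omega_\varphi - A\varphi$. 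By Lemma \ref{l:phidotev},
\begin{align*}
\square(-A\varphi) = -A\dot\varphi + An - A\tr_{\omega_\varphi}\hat\omega,
\end{align*}
so choosing $A > C$ gives
\begin{align*}
\square G \le C' - \tr_{\omega_\varphi}\hat\omega \le C',
\end{align*}
using the bound on $\dot\varphi$. The maximum principle then yields $G \le C$ on $[0,\tau)$, and the bound on $\varphi$ gives $\tr_{g_0} g \le C$. This is hypothesis (1), so Case (1) applies.

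\textbf{Main obstacle.} The hardest step is the higher-order regularity in Case (1), namely the Evans--Krylov $C^{2,\alpha}$ estimate for the quaternionic operator. I expect to import it from the parabolic quaternionic Monge--Amp\`ere literature (\cite{BGV}) rather than prove it here. If a cleaner route is needed, I would try a Calabi-type $C^3$ estimate: apply the maximum principle to $|\Upsilon(\omega_\varphi,\hat\omega)|^2$, using the evolution in Lemma \ref{l:schwarz} and the bounds on $g$ already obtained. That would bound $|T|$ and then $R^C$, which is enough to conclude via Theorem \ref{t:mainthm1}(2).
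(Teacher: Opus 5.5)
Your proposal is correct and follows essentially the same route as the paper: the scalar reduction, the bounds on $\varphi$ and $\dot\varphi$ giving volume control, the maximum principle for $\log \tr_{\hat{\omega}} \omega_{\varphi} - A\varphi$ via Lemma \ref{l:schwarz} in case (2), and then an imported Evans--Krylov $C^{2,\alpha}$ estimate followed by Schauder bootstrapping (the paper cites Chu's general estimate, cf.\ \cite[Theorem 3.3]{BGV}). Your auxiliary choice $\tau' \in (\tau, \tau^*)$ is harmless but unnecessary, since the paper's background $\hat\Omega_t$ is already positive on the closed interval $[0,\tau]$.
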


\begin{proof} Since $\tau < \tau^*$ we may set up the scalar reduction as above, and aim to prove uniform a priori estimates on $[0, \tau)$.  Propositions \ref{p:C0estimate2} and \ref{p:phidotestimate} yield estimates for $\varphi$ and $\dot{\varphi}$, hence also on the volume form.  Thus, to establish uniform parabolicity of the equation it suffices to obtain a uniform upper bound.  In case (1) of the theorem this bound is assumed.  In case (2) we apply the maximum to $\log \tr_{\hat{\omega}} \omega_{\varphi} - A \varphi$, exploiting the estimate of Lemma \ref{l:schwarz} and the assumed bound on torsion.

Having established the uniform equivalence of the metrics and a uniform bound for the potential, it remains to show the higher order estimates.  The key $C^{2,\alpha}$ estimate follows using the general Evans-Krylov type estimate developed by Chu \cite{chu2016c, chu2020parabolic} (cf. \cite[Theorem 3.3]{BGV}).  All higher order estimates follow then via Schauder theory, finishing the proof by a standard argument.
\end{proof}

\subsection{Four dimensional HKT Ricci flow} \label{sub:4-dim}

In this subsection we prove Theorem \ref{t:4dthm}.  In dimension four, every compact HKT manifold $(M,I,J,K,g)$ is conformal to one of either a flat torus, hyperK\"ahler $K3$ surface, or a a quaternionic Hopf surface equipped with its standard locally conformally flat metric \cite{Bo}.  Every hyperKähler metric is clearly HKT-Einstein with Einstein constant equal to zero.  The standard locally conformally flat metric on a Hopf surface is also HKT-Einstein with HKT-Einstein constant equals to $1$, which follows by a direct computation.

We start with the following standard lemma to fix notation and conventions.
\begin{lem} \label{l:conf}
Let $(M^4,I,J,K,\bar g)$ be a HKT manifold, and let $(I,J,K,e^u \bar g)$ be a HKT conformal structure. Then
\begin{equation} \label{eqn:rho_conf}
    \rho(e^u \bar \omega)=\rho(\bar \omega)+dd^cu.
\end{equation}
and
\begin{equation} 
    s(e^u \bar \omega)=e^{-u}\left(s(\bar \omega)-2 \Delta_{\bar \omega}u \right).
\end{equation}
\end{lem}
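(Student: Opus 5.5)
The plan is to compute the conformal change of the Lee form and then use the identity $\rho_L = d(L\theta)$ from \eqref{eqn:rhoChern} together with the scalar formula \eqref{eqn:schern}. In real dimension four, with $n=1$ and so $2n=2$ complex dimensions for $I$, the defining relation $d\omega^{n-1}=\theta\wedge\omega^{n-1}$ reads $d\omega = \theta\wedge\omega$. For $\omega = e^u\bar\omega$ we get
\[
d\omega = e^u(du\wedge\bar\omega + \bar\theta\wedge\bar\omega) = (\bar\theta + du)\wedge\omega .
\]
Since wedging with $\omega$ is injective on $1$-forms in dimension four, this gives $\theta = \bar\theta + du$. Note that $e^u\bar g$ is again hyperHermitian, and it is assumed HKT. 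Hence $\rho(e^u\bar\omega) = dI\theta = dI\bar\theta + dI du$.

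To identify the last term, recall that $d^c = I d$ with the paper's sign convention $(I\alpha)(X) = -\alpha(IX)$ on $1$-forms. We check directly that $dIdu = dd^cu$, after confirming that $d^c u := I du$ is the convention used in $H = d^c\omega = Id\omega$. This gives \eqref{eqn:rho_conf}. It is consistent with Remark~\ref{rmk:standardcoordinates}, which expresses $\Delta_\omega f$ as $\tfrac12\sum dd^cf(e_i,Ie_i)$.

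For the scalar curvature, use $s = \tfrac12\sum_{i=1}^4 \rho(Ie_i,e_i)$ with $\{e_i\}$ orthonormal for the new metric $g = e^u\bar g$. A $g$-orthonormal frame is $e_i = e^{-u/2}\bar e_i$ with $\{\bar e_i\}$ $\bar g$-orthonormal, so for any $2$-form $\beta$ we have $\tr_\omega\beta = e^{-u}\tr_{\bar\omega}\beta$. Applying this to \eqref{eqn:rho_conf} yields
\[
s(e^u\bar\omega) = e^{-u}\Bigl(s(\bar\omega) + \tfrac12\sum_i dd^cu(I\bar e_i,\bar e_i)\Bigr).
\]
By Remark~\ref{rmk:standardcoordinates}, $\tfrac12\sum_i dd^cu(\bar e_i,I\bar e_i) = \Delta_{\bar\omega}u$. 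Reversing the order of the arguments gives the term $-\Delta_{\bar\omega}u$.

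The main obstacle is the constant: the formula claims $-2\Delta_{\bar\omega}u$, not $-\Delta_{\bar\omega}u$. This must be reconciled with the paper's normalization of the sums. One source is the range of the sum: \eqref{eqn:schern} sums over $4n$ frame vectors, while Remark~\ref{rmk:standardcoordinates} sums $dd^c f(e_i,Ie_i)$ over $i=0,\dots,2n-1$ real coordinate directions $\partial_{x_i}$ only. The full-frame sum over $\{e_i, Ie_i\}$ therefore double counts relative to the Laplacian formula, producing the factor $2$.

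A cross-check is available through \eqref{eqn:schern}. With $\theta = \bar\theta + du$ and the conformal change of $d^*$ in dimension four, $d^*_g\beta = e^{-u}(d^*_{\bar g}\beta - \langle du,\beta\rangle_{\bar g})$ for $1$-forms, we expand
\[
d^*\theta + |\theta|^2 = e^{-u}\bigl(d^*_{\bar g}\bar\theta + |\bar\theta|^2 + d^*_{\bar g}du + \langle du,\bar\theta\rangle + \dots\bigr).
\]
The mixed first-order terms should combine with $d^*du$ into the Chern Laplacian, $-2\Delta_{\bar\omega}u$ with the Riemannian-to-Chern correction $\langle du,\bar\theta\rangle$. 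This pins down the constant independently of frame-counting conventions.
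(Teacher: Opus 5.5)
Your proposal is correct and takes the same route as the paper. The paper's one-line proof combines $\theta_{e^u\bar g}=\theta_{\bar g}+du$ with $\rho=dI\theta$ for both HKT metrics; you derive the Lee-form shift from $d\omega=\theta\wedge\omega$ and then trace using the $g$-orthonormal frame $e^{-u/2}\bar e_i$. Your frame-counting explanation of the factor $2$ is right: the trace in \eqref{eqn:schern} runs over all four frame vectors, while Remark~\ref{rmk:standardcoordinates} sums only over the $e_i$ of a frame $\{e_i,Ie_i\}$, so $\tfrac12\sum_{a=1}^4 dd^cu(I\bar E_a,\bar E_a)=-2\Delta_{\bar\omega}u$ and the hedged $d^*$ cross-check is not needed.
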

\begin{proof}
This is a consequence of the fact that $\theta_{e^u\bar g}=\theta_{\bar g}+du$ and $\rho (e^u \bar \omega)=dI\theta_{e^u \bar \omega}$.
\end{proof}

\begin{lem} \label{l:phi4d}
Let $(M^4,I,J,K, \bar g)$ be a HKT manifold and consider $(I,J,K,e^u \bar g)$ a conformal HKT structure. Then 
\[
\Phi(e^u \bar g)=\Phi(\bar g)-(\Delta_{\bar \omega}u) \, \bar \omega.
\]
\end{lem}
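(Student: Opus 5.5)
The plan is to compute $\Phi(e^u\bar g)$ directly from its definition $\Phi(g)=\tfrac12(\rho-J\rho)$ and then use the conformal formula \eqref{eqn:rho_conf} from Lemma \ref{l:conf}. Since the operation $\rho\mapsto \tfrac12(\rho - J\rho)$ is linear, we get
\[
\Phi(e^u\bar g)=\Phi(\bar g)+\tfrac12\bigl(dd^cu - J\,dd^cu\bigr).
\]
Here $d^c=d^c_I$, so $dd^cu=2\i\partial\bar\partial u$ is the $I$-$(1,1)$ form. The statement therefore reduces to a purely algebraic, pointwise identity in real dimension four. It says that the $J$-anti-invariant part of $dd^cu$ is a multiple of $\bar\omega$, namely
\[
\tfrac12\bigl(dd^cu-J\,dd^cu\bigr)=-(\Delta_{\bar\omega}u)\,\bar\omega .
\]

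To prove this identity, I use the splitting of real $I$-$(1,1)$ forms on a $4$-dimensional hyperHermitian vector space. Such forms form a $4$-dimensional space, which decomposes as $\R\bar\omega\oplus\Lambda^-$, where $\Lambda^-$ is the space of anti-self-dual forms. With the orientation induced by $I$, the forms $\omega_I,\omega_J,\omega_K$ span $\Lambda^+$. Since $I,J,K$ generate $\Sp(1)$ acting on $\Lambda^+$ only, they act trivially on $\Lambda^-$. Consequently every $\beta\in\Lambda^-$ satisfies $J\beta=\beta$, and so $\beta$ is killed by $\tfrac12(1-J)$. On the other hand, $\bar\omega\in A^{1,1}$ satisfies $J\bar\omega=-\bar\omega$. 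Hence $\tfrac12(1-J)$ acting on $I$-$(1,1)$ forms is exactly the projection onto $\R\bar\omega$:
\[
\beta\longmapsto \frac{\langle\beta,\bar\omega\rangle}{|\bar\omega|^2}\,\bar\omega .
\]
Here one should double check the sign conventions of the paper's $J$-action on $2$-forms, $(J\alpha)(X,Y)=\alpha(JX,JY)$. With these conventions, $\Lambda^-$ forms are $J$-invariant, while $\omega_I$ satisfies $\omega_I(JX,JY)=-\omega_I(X,Y)$, which is consistent.

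It remains to identify the coefficient, which I would do by taking the trace. Remark \ref{rmk:standardcoordinates} gives
\[
\Delta_{\bar\omega}u=\tfrac12\sum_i dd^cu(e_i,Ie_i),
\qquad
\tfrac12\sum_i \bar\omega(e_i,Ie_i)=\tfrac12\sum_i g(e_i,I^2e_i)=-2 \quad (n=1).
\]
The component of $dd^cu$ along $\bar\omega$ is determined by this same trace pairing, since $\Lambda^-$ forms are trace-free. Write $\tfrac12(1-J)dd^cu=c\,\bar\omega$. Taking traces gives $-2c=\operatorname{tr}(dd^cu)=\Delta_{\bar\omega}u$. This yields $c=-\tfrac12\Delta_{\bar\omega}u$, so the projection equals $-\tfrac12\Delta_{\bar\omega}u\,\bar\omega$.

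This apparently disagrees with the claimed coefficient by a factor of $2$. Two facts have to be reconciled. First, the trace $\tfrac12\sum_i\beta(Ie_i,e_i)$ of the $J$-invariant part vanishes, because $\Lambda^-$ is traceless. Second, I need to check consistency with the scalar formula in Lemma \ref{l:conf}. Tracing the claimed identity gives
\[
e^u s(e^u\bar\omega)=s(\bar\omega)-2\Delta_{\bar\omega}u ,
\]
which matches Lemma \ref{l:conf} with the trace normalization $\operatorname{tr}_{\bar\omega}\bar\omega=2n=2$.

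So the main obstacle is this normalization bookkeeping: consistently matching the traces $\operatorname{tr}_\omega$, $\Delta_\omega$, and the sign convention of $J$ acting on forms. I would settle it by fixing the trace as $\operatorname{tr}_\omega\beta=\tfrac12\sum_i\beta(Ie_i,e_i)$, as in the proof of Theorem \ref{t:scalarthm_bulk}. Under that convention $\operatorname{tr}\bar\omega=2$, so $c\cdot 2=-2\Delta_{\bar\omega}u$, using $\operatorname{tr}\,dd^cu=-2\Delta_{\bar\omega}u$ as read off from Lemma \ref{l:conf}. This gives $c=-\Delta_{\bar\omega}u$, as claimed.
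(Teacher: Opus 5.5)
Your route is the paper's: linearity of $\rho\mapsto\tfrac12(\rho-J\rho)$, the conformal formula $\rho(e^u\bar{\omega})=\rho(\bar{\omega})+dd^cu$ from Lemma \ref{l:conf}, and the fact that in real dimension four the $J$-anti-invariant part of an $I$-$(1,1)$ form is a multiple of $\bar{\omega}$. The paper simply asserts that a Salamon $2$-form in dimension four is a multiple of $\bar{\omega}$, and your $\Lambda^+/\Lambda^-$ argument is a fine justification of that. Your final coefficient $c=-\Delta_{\bar{\omega}}u$ is also correct. However, the coefficient step as you first wrote it fails, and your diagnosis of why it fails is wrong.

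The identity $-2c=\operatorname{tr}(dd^cu)=\Delta_{\bar{\omega}}u$ is false, and the factor-of-$2$ ``discrepancy'' comes only from that line. In Remark \ref{rmk:standardcoordinates}, the formula $\Delta_\omega f=\tfrac12\sum_{i=0}^{2n-1}dd^cf(e_i,Ie_i)$ sums only over the $2n$ vectors $e_i$ of a frame $\{e_i,Ie_i\}$, that is, over half the frame. Your value $-2$ for $\bar{\omega}$, by contrast, sums over all $4n=4$ frame vectors. If you sum both over the full frame, you get $\tfrac12\sum_E dd^cu(E,IE)=2\Delta_{\bar{\omega}}u$ and $\tfrac12\sum_E\bar{\omega}(E,IE)=-2$. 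Since the $J$-invariant part lies in $\Lambda^-$ and is trace-free, this gives $-2c=2\Delta_{\bar{\omega}}u$, so $c=-\Delta_{\bar{\omega}}u$ directly.

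Your proposed fix was to switch to $\tr_\omega\beta=\tfrac12\sum\beta(Ie_i,e_i)$. That only flips an overall sign on both sides and cannot remove a factor of $2$. What actually closes your argument is the identity $\tr_{\bar{\omega}}dd^cu=-2\Delta_{\bar{\omega}}u$, which you read off from the scalar formula in Lemma \ref{l:conf}. That is logically admissible, since the lemma precedes this one. Still, you should replace the two conflicting computations with the single direct full-frame computation above, rather than calibrating constants against the lemma.
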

\begin{proof}
It follows by Lemma \ref{l:conf} that 
\[
\rho(e^u \bar \omega)=\rho(\bar \omega)+dd^c_I u.
\]
By definition,
\[
\Phi(e^u \bar g)=\Phi(\bar g)+\frac{dd^c_I u - J dd^c_I u}{2}.
\]
$\frac{dd^c_I u - J dd^c_I u}{2}$ is a multiple of $\bar \omega$: this follows since it is a Salamon $2$-form on a $4$-dimensional manifold. Hence 
\[
\frac{dd^c_I u- J dd^c_I u}{2}= -(\Delta_{ \bar \omega}u) \, \bar \omega,
\]
concluding the proof.
\end{proof}

We first observe that the flow naturally reduces to a conformal flow, which up to scale is the Chern–Yamabe flow considered in \cite{Calamai} (see also \cite{ACS,LM}). 

\begin{lem} \label{l:4dconformal}
Let $(M^4, g_t, I, J, K)$ be a solution to (\ref{f:HKTflow}).  Then there exists an HKT-Einstein metric $\bar{g}$ on $M$ with constant $\lambda \geq 0$ and $u_t$ a solution of
\begin{align} \label{f:conformalflow}
    \dt u =&\ e^{-u} \left(  \Delta_{\bar{\omega}} u - \lambda \right)
\end{align}
such that $g_t = e^{u_t} \bar{g}$.  In the case $\lambda > 0$ the solution to the normalized flow (\ref{eq:normalizedHKTflow}) is described by $g_t = e^{u_t} \bar{g}$ where
\begin{align} \label{f:normalizedconformalflow}
    \dt u =&\ e^{-u} \left( \Delta_{\bar{\omega}} u - \lambda \right) + \lambda.
\end{align}

\begin{proof} As discussed above, $M^4$ is either a hyperK\"ahler manifold or a quaternionic Hopf surface, and moreover there exists $u_0 \in C^{\infty}(M)$ such that $g = e^{u_0} \bar{g}$ where $\bar{g} = g_{\mathrm{HK}}$ or $g_{\mathrm{Hopf}}$, which are HKT-Einstein with constant $\lambda = 0$ or $\lambda > 0$ respectively. Using Lemma \ref{l:phi4d} we know that
\begin{align*}
    \Phi(e^{u} \bar{g}) = - \Delta_{\bar{\omega}} u \, \bar{\omega} + \Phi(\bar{g}) = -  \left( e^{-u} \Delta_{\bar{\omega}} u \right) e^u  \bar{\omega} + \lambda \bar{\omega} = e^{-u} \left( -  \Delta_{\bar{\omega}} u + \lambda \right) e^u \bar{\omega}.
\end{align*}
It follows from a straightforward computation that if $u_t$ is the unique solution to (\ref{f:conformalflow}) with initial condition $u_0$ then $g_t = e^{u_t} \bar{g}$, as claimed.  For the case of the normalized flow, we observe that we can express
\begin{align*}
    \Phi(e^{u} \bar{g}) - \lambda e^{u} \bar{\omega} = \left[ e^{-u} \left( - \Delta_{\bar{\omega}} u + \lambda \right) - \lambda \right] e^u \bar{\omega},
\end{align*}
and the claim follows as above.
\end{proof}
\end{lem}

\begin{proof}[Proof of Theorem \ref{t:4dthm}] The case of hyperK\"ahler background of course follows corresponds to solving equation (\ref{f:conformalflow}) with $\lambda = 0$.  An elementary maximum principle argument shows that $u$ has a priori upper and lower bounds which are uniform in time.  The higher regularity can be obtained as in the proof of Theorem \ref{t:coneconjthm}, or directly from the Krylov-Safonov regularity theory.  The convergence follows from a straightforward adaptation of the method of Li-Yau \cite{CaoKRF,li1986parabolic}.

In the case of a Hopf surface background, we apply the maximum principle to \eqref{f:normalizedconformalflow} to show time-dependent upper and lower bounds for $u$.  The higher regularity is dealt with as in the hyperK\"ahler case.
\end{proof}

\section{HKT-Einstein metrics} \label{s:HKTE}

In this section we study the structure of compact HKT-Einstein metrics, centering on a deeper understanding of the relationship of HKT-Einstein metrics to strong HKT metrics, i.e. those with $d H = 0$.  We give distinct characterizations of HKT-Einstein metrics in the cases of $n = 4$ and $n > 4$, show that balanced HKT metrics are precisely the HKT-Einstein metrics with vanishing scalar curvature, and exhibit a Bochner vanishing result.  Then we prove Theorem \ref{t:sHKTE} which characterizes which of the known strong HKT metrics (coming from bi-invariant metrics on Lie groups) are HKT-Einstein.  Finally we prove Theorem \ref{t:4dnonuniquenessinto}, which constructs a divergent sequence of (non-strong) HKT-Einstein metrics on quaternionic Hopf surfaces.

\subsection{Fundamental structure}

\begin{lem} \label{l:constant_chern_scalar}
Let $(M^4,I,J,K)$ be a compact hypercomplex $4$ manifold, and let $\bar g=g_{\mathrm{HK}}$ or $g_{\mathrm{Hopf}}$. Then a HKT metric $g=e^u \bar g$ is HKT-Einstein if and only if it has constant (positive) Chern scalar curvature. 
\end{lem}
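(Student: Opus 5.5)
The plan is to use the four-dimensional structure of Salamon forms: in real dimension four, $A^{1,1}$ is spanned pointwise by $\omega$, a fact already used in the proof of Lemma \ref{l:phi4d}. Consequently, for any hyperHermitian metric on $M^4$, $\Phi(g)$ is a function multiple of $\omega$. Tracing against $\omega$ and using $\tr_\omega \Phi(g) = \tr_\omega \rho = s$ (with $n=1$, as in the proof of Theorem \ref{t:scalarthm_bulk}), I would identify that multiple.

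More concretely, I will combine Lemma \ref{l:phi4d} with Lemma \ref{l:conf}. Since $\bar g$ is HKT-Einstein with constant $\bar\lambda\ge 0$, Lemma \ref{l:phi4d} gives
\[
\Phi(e^u\bar g) = \bigl(\bar\lambda - \Delta_{\bar\omega}u\bigr)\bar\omega = e^{-u}\bigl(\bar\lambda - \Delta_{\bar\omega}u\bigr)\,\omega, \qquad \omega = e^u\bar\omega .
\]
Next I would check the normalizations against Lemma \ref{l:conf}, where $s(e^u\bar\omega)=e^{-u}(s(\bar\omega)-2\Delta_{\bar\omega}u)$ and $s(\bar\omega)=2n\bar\lambda=2\bar\lambda$. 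These combine to
\[
\Phi(g) = \tfrac12 s(g)\,\omega .
\]
This is consistent with \eqref{eqn:chern_scalar}, since $s = 2n\lambda$ with $n=1$. The main point to be careful about is the convention mismatch between the trace of $\omega$ (here $\tr_\omega\omega = 2n = 2$) and the factor $2$ in the Laplacian terms; the identity above is what the traces force, so I will simply verify the two lemmas agree.

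With this identity the equivalence is immediate. If $g$ is HKT-Einstein, then $\Phi(g)=\lambda\omega$ with $\lambda$ constant, so $s=2\lambda$ is constant. Conversely, if $s$ is constant, then $\Phi(g)=\tfrac{s}{2}\omega$ with $\tfrac{s}{2}$ constant, so $g$ is HKT-Einstein.

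It remains to justify the parenthetical "positive". Compactness gives $\int_M s\,dV\ge 0$, with equality iff $\theta=0$, by the remark after \eqref{eqn:schern}. So a constant $s$ is nonnegative, and it is positive unless $g$ is balanced, i.e.\ $\theta = 0$. In the Hopf case $\theta\neq 0$ for every conformal metric, since the first Betti number obstructs this: a conformal change only alters $\theta$ by an exact form, and $[\theta_{\mathrm{Hopf}}]\neq 0$. Hence the constant is positive there. In the hyperK\"ahler case the constant is $0$, obtained by integrating $s=e^{-u}(-2\Delta_{\bar\omega}u)$ against the appropriate volume form. The main obstacle I expect is purely bookkeeping of these sign and trace conventions; there is no analytic difficulty.
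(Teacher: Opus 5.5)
Your proposal is correct and follows the paper's proof: combining Lemma \ref{l:phi4d} with Lemma \ref{l:conf}, using $s(\bar\omega)=2\bar\lambda$, gives $\Phi(g)=\tfrac{s}{2}\,\omega$, and both directions follow immediately. Your treatment of the parenthetical ``positive'' is sound and more careful than the paper's. You show the constant must be $0$ in the hyperK\"ahler case, as in the remark following the lemma, and positive in the Hopf case because $[\theta_{\mathrm{Hopf}}]\neq 0$.
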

\begin{proof}
The direct implication is obvious. Let us prove the converse. Assume that $g$ has constant (positive) Chern scalar curvature $s$. By Lemma \ref{l:conf}, 
\begin{equation*} 
    s= e^{-u}(2 \lambda -2 \Delta_{\bar \omega}u), 
\end{equation*}
where $\lambda=0$ if $g= e^u g_{\mathrm{HK}}$ and $\lambda=1$ if $g= e^u g_{\mathrm{Hopf}}$.  On the other hand, by Lemma \ref{l:phi4d}, 
\[
\Phi(g)= \lambda \bar \omega-(\Delta_{\bar \omega}u) \bar \omega=\lambda \bar \omega + \tfrac s2e^u  \bar \omega - \lambda \bar \omega= \tfrac s2e^u \bar \omega= \tfrac s2 \omega.
\]
\end{proof}
\begin{rmk}
The preceding proof also shows that if $ g = e^{u} g_{\mathrm{HK}}$, then the unique (up to rescaling) HKT-Einstein metric in the conformal class is the hyperK\"ahler metric itself. Indeed, setting $\lambda=0$ in \eqref{eqn:hkteinstein4d} yields
\[
\Delta_{\bar \omega} u = -e^{u} s.
\]
By the maximum principle, $u$ must be constant and $s=0$. For the Hopf metric, the uniqueness argument fails. Since the Hopf surface is not K\"ahler, any HKT-Einstein metric must have non-zero HKT-Einstein constant. Up to absorbing the rescaling in the conformal factor, we may assume that it is equal to $1$. By the above lemma, a HKT metric $g=e^u g_{\mathrm{Hopf}}$  is HKT-Einstein if and only if 
\begin{equation}
\label{eqn:hkteinstein4d}
    2= e^{-u}(2 \lambda -2 \Delta_{\bar \omega}u).
\end{equation}
We will show in Section \ref{s:Hopf} that we have actually infinite many solutions of \eqref{eqn:hkteinstein4d}. 
\end{rmk}

The following lemma is well-known to experts. We will use it to obtain the characterization of the HKT-Einstein condition in quaternionic dimension at least two.
\begin{lem} \label{lemma:conformal}
Let $(M^{4n},I,J,K)$ be an hypercomplex manifold, and let $(I,J,K,g)$ be an HKT structure. For any  non-constant function $u$, $(I,J,K,e^ug)$ is HKT if and only if $n=1$. 
\end{lem}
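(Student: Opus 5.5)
I will use characterization (3) of Definition \ref{defn:HKT}: a hyperHermitian structure is HKT if and only if its HKT form is $\partial$-closed. First I record how the HKT form changes under a conformal change. Since $\omega_J$ and $\omega_K$ scale linearly with $g$, the HKT form of $e^u g$ is $\Omega_u = e^u\Omega$, where $\Omega$ is the HKT form of $g$. Using $\partial\Omega = 0$, this gives
\begin{align*}
\partial \Omega_u = e^u\, \partial u \wedge \Omega .
\end{align*}
So $(I,J,K,e^u g)$ is HKT if and only if $\partial u \wedge \Omega = 0$ as a $(3,0)$-form.

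The key step is then pure linear algebra on a quaternionic vector space of quaternionic dimension $n$. The map
\begin{align*}
\Lambda^{1,0} \to \Lambda^{3,0}, \qquad \beta \mapsto \beta\wedge\Omega
\end{align*}
should be injective when $n\ge 2$. To see this, work at a point in the standard coordinates of Remark \ref{rmk:standardcoordinates}, where
\begin{align*}
\Omega=\sum_{i=0}^{n-1} dz^{2i+1}\wedge dz^{2i}.
\end{align*}
Write $\beta=\sum_j \beta_j dz^j$ and suppose some $\beta_j \neq 0$. Because $n\ge 2$, there is an index $i$ with $\{2i,2i+1\}$ disjoint from $\{j\}$. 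In $\beta\wedge\Omega$ the coefficient of $dz^j\wedge dz^{2i+1}\wedge dz^{2i}$ is $\pm\beta_j$, and no other term can produce this monomial. Hence $\beta\wedge\Omega \neq 0$. Equivalently, $\Omega$ is nondegenerate, so wedging with it is injective on $1$-forms as soon as $2n\ge 3$, i.e.\ when $n\geq 2$.

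Consequently, for $n\ge2$ the HKT condition for $e^u g$ forces $\partial u=0$. Since $u$ is real, $du=\partial u+\bar\partial u = \partial u+\overline{\partial u}=0$. So $u$ is locally constant, and constant if $M$ is connected. This contradicts non-constancy, so $e^u g$ is not HKT.

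For $n=1$, $\partial u\wedge\Omega$ is a $(3,0)$-form on a complex surface and therefore vanishes automatically. Thus $e^u g$ is HKT for every $u$, which gives the converse direction. This matches the discussion in \S\ref{sub:4-dim}.

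I expect the main point requiring care to be the scaling claim $\Omega_u=e^u\Omega$. It holds because $\Omega$ is defined linearly from $\omega$ via $J$ and $K$, and the complex structures are unchanged. One also has to read the statement as allowing non-constant $u$ only when $M$ is connected.
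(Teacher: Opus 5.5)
Your proof is correct, but it takes a different route from the paper.

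\textbf{The paper's route.} The paper uses characterization (2) of Definition~\ref{defn:HKT}, equality of the Bismut torsions. It reduces the HKT condition for $e^u g$ to the equality of the three real $3$-forms $d^c_L(e^u)\wedge\omega_L$ for $L=I,J,K$. Then, at a point where $de^u\neq 0$, it picks a quaternionic orthonormal frame adapted to $de^u$. For $n\geq 2$ it exhibits the explicit discrepancy
\[
d^c_I(e^u)\wedge\omega\,(Ie_1,e_2,Ie_2)\neq 0,\qquad d^c_J(e^u)\wedge\omega_J\,(Ie_1,e_2,Ie_2)=0 .
\]

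\textbf{Your route.} You use characterization (3). Because $\Omega_u=e^u\Omega$, the whole question collapses to the single complex condition $\partial u\wedge\Omega=0$. That in turn reduces to the Lefschetz-type fact that wedging with the nondegenerate $(2,0)$-form $\Omega$ is injective on $\Lambda^{1,0}$ exactly when $n\geq 2$. Your coordinate check of this fact is sound.

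\textbf{What your approach buys.}
\begin{itemize}
\item You handle one scalar-type condition instead of comparing three torsion forms, so there is no sign bookkeeping with the conventions for $Le^1$ and $\omega_L$.
\item The $n=1$ direction is immediate by type, since $\Lambda^{3,0}=0$ on a complex surface. In the paper's proof the $n=1$ equality of the three $3$-forms is only asserted. It does hold: the frame expressions become $|de^u|\,Ie^1\wedge Ke^1\wedge Je^1$ and $|de^u|\,Je^1\wedge Ie^1\wedge Ke^1$, which agree by a cyclic permutation, and the $K$ case has to be checked as well.
\end{itemize}

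\textbf{What the paper's approach buys.} It stays with the torsion characterization most common in the literature. It also makes visible exactly which components of the $I$- and $J$-Bismut torsions disagree after a conformal change.

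Both arguments rest on one of the equivalences quoted in Definition~\ref{defn:HKT}. Both also tacitly assume $M$ is connected, as you correctly point out.
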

\begin{proof}
The hyperHermitian structure $(I,J,K,e^ug)$ is HKT if and only if 
\[
d^c_I (e^u\omega)=d^c_J (e^u\omega_J)=d^c_K (e^u\omega_K).
\]
Since 
\[
d^c_L (e^u \omega_L)=d^c_L(e^u) \wedge \omega_L + e^u d^c_L\omega_L,
\]
the HKT condition reduces to proving that 
\[
d^c_I(e^u) \wedge \omega = d^c_J(e^u) \wedge \omega_J = d^c_K(e^u) \wedge \omega_K.
\]

Fix a point $p \in M$ in which $de^u \neq 0$ and set $e^1 = \frac{de^u}{|de^u|}$. Complete $e^1$ to an orthonormal frame of $T_pM$ of the form
$
\{e^1, Ie^1, Je^1, Ke^1, e^i, Ie^i, Je^i, Ke^i\}_{i=2}^n.
$
With respect to this frame, we compute
\begin{align*}
d^c_I(e^u) \wedge \omega &= |de^u| Ie^1 \wedge \bigl(Ie^1 \wedge e^1 + Ke^1 \wedge Je^1 + \sum_{i=2}^n Ie^i \wedge e^i + Ke^i \wedge Je^i\bigr),\\
d^c_J(e^u) \wedge \omega_J &= |de^u| Je^1 \wedge \bigl(Je^1 \wedge e^1 + Ie^1 \wedge Ke^1 + \sum_{i=2}^n Je^i \wedge e^i + Ie^i \wedge Ke^i\bigr).
\end{align*}
It follows immediately that these two expressions are equal if and only if $n=1$. Indeed, if $n \geq 2$, we may evaluate 
\[
d^c_I(e^u) \wedge \omega (Ie_1, e_2, Ie_2) \neq 0,
\]
while 
\[
d^c_J(e^u) \wedge \omega_J (Ie_1, e_2, Ie_2) = 0.
\]
\end{proof}
\begin{prop}
Let $(M^{4n}, I, J, K)$ be a hypercomplex manifold with $n \ge 2$. An HKT structure $(I,J,K,g)$ is HKT-Einstein if and only if $\mathring{\Phi}(g) = 0$, where $\mathring{\Phi}(g)$ denotes the trace-free part of $\Phi(g)$ with respect to $\omega$.
\end{prop}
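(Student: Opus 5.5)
The forward implication is immediate: if $\Phi(g)=\lambda\omega$ then $\Phi(g)$ is pure trace, so $\mathring{\Phi}(g)=0$. For the converse, I would argue as for Einstein metrics via a Schur-type lemma, using the closedness of $\Phi$ and the conformal rigidity of HKT metrics in Lemma \ref{lemma:conformal}. Suppose $\mathring{\Phi}(g)=0$. Then $\Phi(g)=f\omega$ with $f=\tfrac{s}{2n}$ by \eqref{eqn:chern_scalar}, and the goal is to show that $f$ is constant.

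The key input is that $\Phi(g)$ is $\partial$- and $\partial_J$-closed. Indeed, in the $(2,0)$ picture \eqref{eqn:del_Jalpha} we have $\Phi(g)=-\partial_J\alpha$. Since $\partial_J^2=0$ this is $\partial_J$-closed, and since $\partial\alpha=0$ and $\partial\partial_J=-\partial_J\partial$ it is also $\partial$-closed. The Salamon form $f\omega$ corresponds to the $(2,0)$-form $f\Omega$. Hence
\[
0=\partial(f\Omega)=\partial f\wedge\Omega+f\,\partial\Omega=\partial f\wedge\Omega,
\]
where we used the HKT condition $\partial\Omega=0$.

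Now use $n\ge 2$: wedging with a nondegenerate $(2,0)$-form is injective on $(1,0)$-forms once the complex dimension $2n$ is at least $3$, i.e.\ $n\ge 2$. So $\partial f=0$. Since $f$ is real, $df=\partial f+\overline{\partial f}=0$, and $f$ is locally constant, hence constant on a connected $M$. This gives $\Phi(g)=\lambda\omega$.

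The main obstacle is matching conventions: I must make sure that the $(2,0)$-form attached to $\Phi(g)$ is exactly $f\Omega$ (up to a harmless constant factor) when $\Phi(g)=f\omega$. This should follow because the correspondence $\omega\mapsto\Omega$ is linear over real functions.

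An alternative route, closer in spirit to Lemma \ref{lemma:conformal}, works with $d$ directly. By Proposition \ref{p:Phiprops}, $d(f\omega)=df\wedge\omega+f\,d\omega$ must be of type $(2,1)+(1,2)$ for all $L$. Since $d\omega$ already has this property, $df\wedge\omega$ must too, and the frame computation in Lemma \ref{lemma:conformal} shows this forces $df=0$ when $n\ge2$.

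The failure for $n=1$ is consistent with both routes: in that case $\partial f\wedge\Omega$ is a $(3,0)$-form on a complex surface, which vanishes identically and gives no information.
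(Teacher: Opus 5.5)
Your proof is correct, but your main argument takes a different route from the paper's. The paper stays with real Salamon forms. It starts from $\Phi(g)=\lambda\omega$ and Proposition~\ref{p:Phiprops}(2). Near any point where $\lambda\neq0$ (normalized to $\lambda>0$), the metric $\lambda g$ is then itself HKT by Definition~\ref{defn:HKT}(1). So Lemma~\ref{lemma:conformal} forces $\lambda$ to be locally constant there, and continuity plus connectedness handle the zero set of $\lambda$. Your alternative route is essentially this argument, with the frame computation applied directly to $df\wedge\omega$ rather than packaged as conformal rigidity.

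Your primary route instead works in the $(2,0)$ picture. Since $\partial\alpha=0$, the form $-\partial_J\alpha$ is $\partial$-closed, so \eqref{eqn:del_Jalpha} and $\partial\Omega=0$ give $\partial f\wedge\Omega=0$. Injectivity of $\beta\mapsto\beta\wedge\Omega$ on $\Lambda^{1,0}_I$ (Lefschetz-type, valid exactly when $2n\geq4$) then gives $\partial f=0$, hence $df=0$ because $f$ is real.

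This is the direct quaternionic analogue of the Schur argument for K\"ahler--Einstein metrics. It is global and needs no sign normalization or zero-set bookkeeping. It also makes the failure at $n=1$ transparent and bypasses both Proposition~\ref{p:Phiprops} and Lemma~\ref{lemma:conformal}. The price is that it rests on the imported identification \eqref{eqn:del_Jalpha} of $\Phi(g)$ with $-\partial_J\alpha$. The paper's proof uses only the type property of $d\Phi(g)$, which it derives from its own curvature identities in Section~\ref{s:HKTflow}.

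The convention issue you flag is harmless. The passage from Salamon forms to $(2,0)$-forms, $\beta\mapsto\tfrac12\bigl(-\beta(K\cdot,\cdot)+\sqrt{-1}\,\beta(J\cdot,\cdot)\bigr)$, is linear over real functions, so $f\omega$ corresponds to $f\Omega$.
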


\begin{proof}
The forward direction is clear. We prove the converse. Assume that $\mathring{\Phi}(g) = 0$. Then $\Phi(g) = \lambda \omega$ for some function $\lambda \in C^\infty(M)$. We claim that $\lambda$ is constant.  Since $d\Phi(g) \in  \bigcap\limits_{L \in S^2} \big( \Omega^{2,1}_L \oplus \Omega^{1,2}_L \bigr)$, we have
\begin{align*}
d(\lambda \omega) \in \bigcap_{L \in S^2} \bigl( \Omega^{2,1}_L \oplus \Omega^{1,2}_L \bigr).
\end{align*}
If $\lambda = 0$, the claim follows immediately. Thus we assume $\lambda \neq 0$ and consider a point $p$ such that $\lambda(p) \neq 0$. On a neighborhood $U$ of $p$ where $\lambda$ does not vanish, we may assume without loss that $\lambda > 0$.  By Definition~\ref{defn:HKT}, $(I,J,K,\lambda g)$ is a conformal HKT structure on $U$, which forces $\lambda$ to be constant on $U$ by Lemma~\ref{lemma:conformal}. Since $\lambda$ is smooth and $M$ is connected, $\lambda$ must be constant everywhere.
\end{proof}
It turns out that the case $\lambda = 0$ can be characterized completely in the compact case.  We recall that a Hermitian metric is called \emph{balanced} if $\theta = 0$.
We show that zero constant HKT-Einstein metrics are precisely balanced HKT metrics.  This is shown in \cite[Proposition 6.4]{fusi2026} for $n > 2$, and we give a slightly simplified proof here for convenience.
\begin{prop}
Let $(I,J,K,g)$ be an HKT structure on a compact hypercomplex manifold $(M^{4n},I,J,K)$. Then $
\Phi(g)=0 $
if and only if the HKT structure is balanced. 
\end{prop}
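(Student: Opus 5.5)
Both directions will go through the scalar curvature identity \eqref{eqn:schern}, $s = d^*\theta + |\theta|^2$, together with the relation $\tr_\omega \Phi(g) = s$ used in the proof of Theorem \ref{t:scalarthm_bulk}. This relation follows from $\Phi(g) = (\rho - J\rho)/2$ and the fact that $\tr_\omega(J\rho) = 0$, since $J\rho$ has $J$-type opposite to $\omega$.

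\emph{Balanced implies $\Phi(g)=0$.} If $\theta = 0$, then by \eqref{eqn:rhoChern} we have $\rho = dI\theta = 0$. Hence $\Phi(g) = \tfrac12(\rho - J\rho) = 0$ directly from \eqref{f:phi}. Alternatively, by \eqref{eqn:leeform} $\alpha$ is the $(1,0)$-part of $\theta$, so $\alpha = 0$ and $\Phi(g) = -\del_J\alpha = 0$. This direction does not use compactness.

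\emph{$\Phi(g)=0$ implies balanced.} Suppose $\Phi(g) = 0$. Tracing against $\omega$ gives $s = \tr_\omega \Phi(g) = 0$ pointwise; this is \eqref{eqn:chern_scalar} with $\lambda = 0$. Since $s = d^*\theta + |\theta|^2$ and $M$ is compact, integrating against $dV_g$ kills the divergence term $\int_M d^*\theta \, dV = 0$. We obtain
\[
0 = \int_M s\, dV = \int_M |\theta|^2\, dV ,
\]
so $\theta \equiv 0$ and the structure is balanced. This is exactly the remark after \eqref{eqn:schern} that $\int_M s\,dV \ge 0$, with equality if and only if $\theta = 0$.

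The only point needing care is the identity $\tr_\omega\Phi(g) = s$ under the paper's sign conventions. I would check it via \eqref{eqn:cherscalar}: $\tr_\Omega(\del_J\alpha) = -\tfrac12 s$, combined with $\Phi(g) = -\del_J\alpha$ from \eqref{eqn:del_Jalpha}, gives $\tr_\Omega \Phi(g) = \tfrac12 s$. This vanishes when $\Phi(g)=0$ regardless of normalization, so the argument is robust to the factor. No use of $n \ge 2$ or of Lemma \ref{lemma:conformal} is needed, so the statement holds in every quaternionic dimension, including $n = 1$.
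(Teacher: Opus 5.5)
Your argument is correct and is essentially the paper's proof. Both trace $\Phi(g)=0$ to get $s=0$, then integrate $s=d^*\theta+|\theta|^2$ over compact $M$ to force $\theta\equiv 0$.

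One slip in a side remark: $\tr_\omega(J\rho)$ is not zero. It equals $-\tr_\omega\rho$, because $J$ acts on $2$-forms as a self-adjoint isometric involution and $J\omega=-\omega$. This is exactly why $\tr_\omega\Phi(g)=s$ rather than $s/2$. Equivalently, $\rho=J\rho$ gives $s=\tr_\omega\rho=-\tr_\omega\rho=0$, which is the paper's one-line step. Your backup check via $\tr_\Omega(\del_J\alpha)=-\tfrac12 s$ is valid, so the conclusion is unaffected.
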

\begin{proof}

If $(I,J,K,g)$ is balanced, then $\rho$ vanishes and the equality above is satisfied. We now prove that $\rho = J \rho$ implies balanced.  Since $\rho = J \rho$, $s = 0$. However, by equation~\eqref{eqn:rhoChern} and Lemma \ref{l:trdItheta},
\[
0 = 2s = \sum_{i=1}^{4n} d I\theta (Ie_i, e_i) = 2|\theta|^2 + 2 d^\star \theta.
\]
Integrating, we obtain
\[
0 = \int |\theta|^2 + d^\star \theta \, \mathrm{vol} = \int  |\theta|^2 \, \mathrm{vol},
\]
which implies $\theta = 0$; hence the HKT structure is balanced.
\end{proof}

\begin{rmk} We point out that balanced HKT manifolds are, in general, not hyperKähler. A counterexample is given by hypercomplex nilmanifolds endowed with an abelian hypercomplex structure: in this case, any (left invariant) hyperHermitian metric is HKT \cite{DF,BDV}; however, nilmanifolds are Kähler only if they are tori.
\end{rmk}

The following Proposition provides a cohomological characterization of HKT-Einstein manifolds according to the sign of the HKT-Einstein constant:
\begin{prop}
Let $(M,I,J,K,g)$ a compact HKT-Einstein manifold. Then either $g$ is balanced or $h_L^{2n,0}=0$, where $h_L^{2n,0}$ is the Dolbeault-Hodge number of the complex structure $L$.
\end{prop}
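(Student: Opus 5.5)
The plan is to split according to the sign of the HKT-Einstein constant. Write $\Phi(g)=\lambda\omega$; by \eqref{eqn:chern_scalar} and the compactness discussion in \S\ref{ss:HKTE} we have $\lambda\geq 0$. If $\lambda=0$, the preceding proposition shows that $g$ is balanced. So it suffices to prove that $\lambda>0$ forces $h^{2n,0}_L=0$ for every $L\in S^2$.

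Assume $\lambda>0$, fix $L=I$ first, and suppose there is a nonzero holomorphic section $\sigma\in H^0(M,K_I)$. The idea is to compare $\sigma$ with the reference section $\Omega^n$. Write $\sigma=h\,\Omega^n$ with $h$ a complex function, so that $\bar\sigma = \bar h\,\bar\Omega^n$. Since $\sigma$ is $I$-holomorphic, $\bar\sigma$ is antiholomorphic, which gives $\partial\bar\sigma=0$. Combining this with \eqref{f:alphadef} yields
\[
0=\partial\bar h\wedge\bar\Omega^n+\bar h\,\alpha\wedge\bar\Omega^n ,
\]
and hence $\bar h\,\alpha=-\partial\bar h$ wherever the product is taken, since wedging a $(1,0)$-form with $\bar\Omega^n$ is injective. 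On the open dense set where $h\neq 0$ this reads $\alpha=-\partial\log\bar h$, that is, $\alpha_{\bar\sigma}=0$ in the notation of Lemma~\ref{l:transgression}.

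Now take $f=\log|h|^2$ on $U=\{h\neq0\}$. Using $\partial\partial_J\log\bar h$ together with the $J$-reality of $\partial\partial_J$ applied to real functions, I expect to obtain
\[
\partial_J\alpha=-\partial_J\partial\log\bar h=\tfrac12\partial\partial_J f
\]
up to the pluriharmonic correction. Taking $\tr_\Omega$ and using \eqref{eqn:cherscalar} together with the identity $\tr_\Omega\partial\partial_J f=-\Delta_\omega f$ from Remark~\ref{rmk:standardcoordinates}, this should give $s=c\,\Delta_\omega f$ on $U$ for a fixed positive constant $c$ (the exact constant to be fixed by conventions). With $s=2n\lambda>0$, the function $f=\log|h|^2$ is strictly subharmonic for the Chern Laplacian on $U$. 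Since $|h|^2$ is continuous on compact $M$, vanishes on $M\setminus U$, and $f\to-\infty$ there, $f$ attains an interior maximum in $U$. At that point $\Delta_\omega f\le 0$, which contradicts $\Delta_\omega f>0$. Therefore $h\equiv 0$ and $h^{2n,0}_I=0$.

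For a general $L$, the Einstein condition and $s$ are independent of the choice of $L$ (item (5) of the HKT properties and the fact that $\Phi$ is $J$-anti-invariant). Replacing $(I,J,K)$ by an oriented quaternionic frame $(L,L',LL')$ reruns the same argument. The main obstacle is the middle step: verifying carefully that $\partial_J\partial\log\bar h$ is really real-proportional to $\partial\partial_J\log|h|^2$. This requires showing that the holomorphic piece $\partial_J\partial\log h$ contributes nothing to the trace. My first attempt would be to use \eqref{eqn:propertiesofJ} in standard coordinates to show that $\partial_J$ applied to $\partial$ of an $I$-holomorphic function vanishes after tracing. If that fails, the fallback is to work with $|\sigma|^2$ measured by $\Omega^n\wedge\bar\Omega^n$ and compute $\Delta_\omega\log|\sigma|^2$ directly through the Chern connection on $K_I$, whose curvature trace is $s$.
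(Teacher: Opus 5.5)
Your overall strategy is sound, and it is a different implementation of the same vanishing principle the paper uses. The paper applies the Chern-connection Bochner formula $\Delta_{\omega_L}\brs{\eta}^2=\brs{\N^C_L\eta}^2+\brs{\bar{\N^C_L}\eta}^2+s\brs{\eta}^2$ at a maximum point of $\brs{\eta}^2$. That argument needs only $s>0$ and never has to deal with the zero set of $\eta$. You instead aim for the logarithmic identity $\Delta_\omega\log\brs{h}^2=s$ on $U=\{h\neq0\}$. By \eqref{eqn:Omega}, $\brs{\Omega^n}_g$ is constant, so $\log\brs{h}^2$ is just $\log\brs{\sigma}^2_g$ up to a constant. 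You derive this from the paper's quaternionic identities \eqref{f:alphadef}, \eqref{eqn:cherscalar} and Remark~\ref{rmk:standardcoordinates}. This keeps the proof inside the HKT formalism, using HKT through \eqref{eqn:cherscalar}. It also requires handling the $-\infty$ singularities at the zeros of $h$, which you do correctly.

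The step you flag as the main obstacle is, however, left unproved, and your first plan for it rests on a false premise: $h$ is not $I$-holomorphic. Conjugating $\bar h\,\alpha=-\del\bar h$ gives $\bar\del h=-h\,\bar\alpha$. Moreover $\alpha\not\equiv0$: if $\theta=0$ then $\rho=dI\theta=0$ and $\Phi(g)=0$, so $\lambda>0$ forces $\theta=\alpha+\bar\alpha\neq0$. So there is no holomorphic piece to discard, and you should not expect a form-level identity $\del_J\alpha=\tfrac12\del\del_J f$.

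Only the trace is needed. On a small ball in $U$ choose a branch $\ell$ of $\log\bar h$. Then $\alpha=-\del\ell$, so $\del_J\alpha=-\del_J\del\ell=\del\del_J\ell$. The operators $\del\del_J$, $\tr_\Omega$ and $\Delta_\omega$ are $\mathbb C$-linear, so $\tr_\Omega\del\del_J\phi=-\Delta_\omega\phi$ holds for complex $\phi$ as well. Hence \eqref{eqn:cherscalar} gives $\Delta_\omega\ell=\tfrac12 s$.

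Since $s$ is real and $\Delta_\omega$ maps real functions to real functions, taking real parts gives $\Delta_\omega\log\brs{h}=\tfrac12 s$, that is, $\Delta_\omega\log\brs{h}^2=s$. The imaginary part only says $\Delta_\omega\arg h=0$ locally; this is the ``correction'' you were worried about, and it is harmless. Thus $c=1$. In particular the sign, on which your maximum principle argument entirely depends and which you left to be fixed by conventions, is the right one.

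Two smaller points. First, for a general $L\in S^2$ you do not need, and have not justified, that the Einstein condition transfers to the frame $(L,L',LL')$. You only need two facts:
\begin{itemize}
\item $g$ is HKT for that frame: the common Bismut connection of $I,J,K$ preserves every $L\in S^2$ and has totally skew torsion, so it is the Bismut connection of each $(g,L)$.
\item $s_L=d^*\theta+\brs{\theta}^2=2n\lambda$: the Lee forms of all $L\in S^2$ coincide, as one sees by comparing the frames $(I,L',IL')$ and $(L,L',LL')$ with $L'$ anticommuting with both $I$ and $L$.
\end{itemize}
Second, with the convention of Lemma~\ref{l:transgression}, what you actually show is $\alpha_\sigma=0$, not $\alpha_{\bar\sigma}=0$.
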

\begin{proof}
Assume that $(I,J,K,g)$ is non balanced. Then for any Hermitian structure $(L,g)$, the Chern scalar curvature is strictly positive (see Equation \eqref{eqn:schern} and \eqref{eqn:chern_scalar}). Then, for any holomorphic $(2n,0)$ form $\eta$, 
the term $g( \eta, S_L\circ
\eta)= s \,  |\eta|^2$, where
\begin{align*}
\left( S \circ \eta \right)_{i_1 \dots i_p} =&\ \frac{1}{p!} \sum_{j = 1}^p
S_{i_j}^k \eta_{i_1 \dots i_{j-1} k i_{j+1} \dots i_p}.
\end{align*} 
We use the Bochner formula for $\eta$, that, in our case, reads as (see \cite{KobayashiWu1970}, \cite{Bochner}):
\begin{align} \label{boch}
\Delta_{\omega_{L}} \brs{\eta}^2 =&\ \brs{\N^C_L \eta}^2 + \brs{\bar{\N^C_L} \eta}^2 + s \,  |\eta|^2,
\end{align}
where $\N^C_L$ is the Chern connection.  Let $p$ be the point where $|\eta_p|^2$ attains its maximum.  Then the maximum principle applied to \eqref{boch}, gives
\[
0 \ge \Delta_{\omega_{L}} |\eta|^2 \ge 0,
\]
implying that $s |\eta_p|^2=0$. Since $s$ is constant and strictly positive, this forces $|\eta_p|^2=0$ and hence $\eta=0$.  
\end{proof}

\subsection{Strong HKT-Einstein structures} \label{s:strongHKTE}

It follows from the discussion in \S \ref{sub:4-dim} that $4$-dimensional strong HKT-Einstein structures are either hyperK\"ahler or the standard metrics on Hopf surfaces. 
In higher dimensions, the general theory remains rather unexplored. In the homogeneous setting, examples of HKT-Einstein metrics have been constructed \cite{fusi2026,BedulliMarcocci2026}; however, as we show below, only a few of them are strong HKT.  In the simply connected case, every compact $8$-dimensional strong HKT manifold is HKT-Einstein \cite[Proposition 7.1]{brienza2026}. Currently, the only known (non-hyperK\"ahler) example of a simply connected compact strong HKT manifold in dimension $8$ is $\SU(3)$.  It is well known that a compact Lie group with a bi-invariant metric and a compatible left-invariant hypercomplex structure yields the standard examples of strong HKT manifolds, which are actually Bismut flat. In \cite{DT,BGP}, it was shown that any left-invariant hypercomplex structure on a compact Lie group arises via Joyce's construction \cite{Joy}. Hence, we may focus on Joyce hypercomplex manifolds to better understand the relation between Bismut flat strong HKT structures and HKT-Einstein metrics.  A characterization of such spaces is mentioned in \cite[Remark 5.9]{BedulliMarcocci2026}, and we provide a direct proof of the classification here.

\medskip

To begin we recall the Joyce construction associated to a compact semisimple Lie group. We exclude the case when $G$ is abelian, indeed in such case Joyce hypercomplex manifolds are just tori $\mathbb{T}^{4k}$, which are hyperK\"ahler, and hence strong HKT-Einstein.  Set $r=\operatorname{rank}{G}$. Let $H$ be a maximal torus in $G$, and let $\mathfrak{h}$ and $\mathfrak{g}$ denote their Lie algebras. Choose an ordered root system $\Delta$ with respect to $\mathfrak{h}_{\mathbb{C}}$ and fix a highest positive root $\alpha_1$. Let $\mathfrak{d}_1$ be the $\mathfrak{su}(2)$-subalgebra of $\mathfrak{g}$ whose complexification is isomorphic to the $\mathfrak{sl}(2,\mathbb{C})$-subalgebra $[\mathfrak{g}_{\alpha_{1}},\mathfrak{g}_{-\alpha_{1}}]\oplus\mathfrak{g}_{\alpha_1}\oplus\mathfrak{g}_{-\alpha_1}$, where $\mathfrak{g}_{\pm\alpha_1}$ are the root spaces for $\pm\alpha_1$. Let $\mathfrak{c}_1$ denote the centraliser of $\mathfrak{d}_1$. There exists a real subspace $\mathfrak{f}_1$ of dimension $4d_1$ ($d_1\in \mathbb{N}_0$) such that $\mathfrak{g}=\mathfrak{c}_1\oplus\mathfrak{d}_1\oplus \mathfrak{f}_1$, where
\[
\mathfrak{f}_1=\mathfrak{g} \cap \bigoplus_{\substack{\alpha_1 \neq \alpha > 0 \\ ( \alpha, \alpha_1 ) \neq 0}} \mathfrak{g}_{\alpha}\oplus\mathfrak{g}_{-\alpha}.
\]
The subalgebra $\mathfrak{c}_1$ is the direct sum of an abelian Lie algebra and a semisimple Lie algebra $\mathfrak{g}'$. If $\mathfrak{g}'$ is nontrivial, Joyce reapplies the above decomposition to $\mathfrak{g}'$, which has an induced root system $\Delta' = \{\beta\in\Delta \mid (\alpha_1,\beta)=0\}$. Recursively, one obtains a decomposition of $\mathfrak{g}$ of the form \cite[Lemma 4.1]{Joy}:
\begin{equation}\label{eqn:Joycedec}
\mathfrak{g}=\mathfrak{b}\oplus \bigoplus_{i=1}^m\mathfrak{d}_i\oplus \bigoplus_{i=1}^m\mathfrak{f}_i\,,
\end{equation}
where
\begin{enumerate} [label={(\arabic*)}]
\item $\mathfrak{b}$ is abelian of dimension $r-m$, 
\item $\mathfrak{d}_i\cong \mathfrak{su}(2)$ for each $i=1,\dots,m$, 
\item $\mathfrak{f}_i$ are (possibly trivial) subspaces.
\end{enumerate} 
The following commutation relations hold:
\begin{enumerate}[label=(J\arabic*),ref=J\arabic*]
\item $[\mathfrak{d}_i,\mathfrak{b}]=0$ for all $i$;
\item $[\mathfrak{d}_i,\mathfrak{d}_j]=0$ for $i\neq j$;
\item $[\mathfrak{d}_i,\mathfrak{f}_j]=0$ for $i<j$;
\item $[\mathfrak{d}_i,\mathfrak{f}_i]\subseteq \mathfrak{f}_i$, and the action of $\mathfrak{d}_i$ on $\mathfrak{f}_i$ is isomorphic to a direct sum of copies of the standard $\mathfrak{su}(2)$-action on $\mathbb{C}^2$.
\end{enumerate}
We call \eqref{eqn:Joycedec} a \emph{Joyce decomposition}. Although the construction depends on the choice of Cartan subalgebra, positive roots, and highest root, different choices yield isomorphic decompositions via an inner automorphism of $\mathfrak{g}$. Thus we may speak unambiguously of \emph{the} Joyce decomposition.
\medskip

Now we define hypercomplex structures using the Joyce decomposition.  Let $\mathbb{T}^{2m-r}\cong \mathrm{U}(1)^{2m-r}$ be a $(2m-r)$-dimensional torus and set $\ell:=2m-r$. Then $\ell \mathfrak{u}(1)\oplus \mathfrak{b}\cong \mathbb{R}^m$. Fix a basis $\mathcal{B}=(e_1^1,\dots,e_1^m)$ of $\ell \mathfrak{u}(1)\oplus \mathfrak{b}$. The Joyce hypercomplex structure on $\mathbb{T}^{\ell}\times G$ depends solely on $\mathcal{B}$ and it is defined on each layer of \eqref{eqn:Joycedec} as follows:
\begin{itemize}
    \item For each $i=1,\dots,m$, fix a basis $(e_2^i,e_3^i,e_4^i)$ of $\mathfrak{d}_i\cong\mathfrak{su}(2)$ satisfying
\[
[e_2^i,e_3^i]=2e^i_4,\quad [e_4^i,e^i_2]=2e^i_3,\quad [e^i_3,e^i_4]=2e^i_2.
\]
Then $(e_1^i,e_2^i,e_3^i,e_4^i)$ is a basis of $\mathfrak{h}_i:=\mathbb{R}\oplus\mathfrak{d}_i\cong\mathbb{R}\oplus\mathfrak{su}(2)$, identified with the quaternions. The hypercomplex structure on $\mathfrak{h}_i$ is given by:
\[
Ie^i_1=e^i_2,\; Ie^i_3=e^i_4,\; Je^i_1=e^i_3,\; Je^i_2=-e^i_4,\; Ke^i_1=e^i_4,\; Ke^i_2=e^i_3.
\]
\item The action of $I,J,K$ is extended to each $\mathfrak{f}_i$ via
\[
If=[e^i_2,f],\quad Jf=[e^i_3,f],\quad Kf=[e^i_4,f],\qquad f\in\mathfrak{f}_i.
\]
\end{itemize}
The complex structures $\{I,J,K\}$ are defined at the identity and extended by left translation to $\mathbb{T}^{\ell}\times G$. By \cite{Sam,Joy}, they are integrable and define a homogeneous hypercomplex structure, called a \emph{Joyce hypercomplex manifold}.

Let $g$ be any bi-invariant metric on $G$, which therefore coincides with a multiple of the negative of the Killing-Cartan form on each simple factor. The Joyce decomposition is $g$-orthogonal \cite{grantcharov2000}. Choose a standard basis $(e_2^j,e_3^j,e_4^j)$ of $\mathfrak{d}_j$ such that $g(e_2^j,e_2^j)=g(e_3^j,e_3^j)=g(e_4^j,e_4^j)=\lambda_j^2$. Take a basis $(e_1^1,\dots,e_1^\ell,e_1^{\ell+1},\dots,e_1^{m})$ of $\ell\mathfrak{u}(1)\oplus\mathfrak{b}\cong\mathbb{R}^m$, where $(e_1^1,\dots,e_1^\ell)$ is a basis of $\ell\mathfrak{u}(1)$ and $(e_1^{\ell+1},\dots,e_1^{m})$ is a $g$-orthogonal basis of $\mathfrak{b}$ with $g(e_1^{\ell+j},e_1^{\ell+j})=\lambda_{\ell+j}^2$ for $j=1,\dots,m-\ell$. Extend $g$ to a positive definite bilinear form on $\ell\mathfrak{u}(1)\oplus\mathfrak{g}$ by setting $g(e_1^{j},e_1^{j})=\lambda_j^2$ for $j=1,\dots,\ell$. By construction,   $g$ is a bi-invariant metric that is hyperhermitian with respect to the Joyce hypercomplex structure defined by $(e_1^1,\dots,e_1^\ell,e_1^{\ell+1},\dots,e_1^{m})$. Furthermore, it is easy to check that it is strong HKT. 
\begin{prop}[\cite{BFGV}, \cite{OP} Equation (64)] \label{p:hkteinstein}
Let $G$ be a compact semisimple Lie group and let $\mathbb{T}^{\ell}\times G$ be such that $(\mathbb{T}^{\ell}\times G,I,J,K)$ admits a bi-invariant strong HKT metric $g$ constructed as above. Then the Lee form of the HKT structure $(I,J,K,g)$ is given by
\[
\theta=\sum_{j=1}^m \frac{(\alpha_j, \alpha_j)}{2} \left(1+\dim_{\mathbb{H}}(\mathfrak{f}_j)\right)g(e_1^j,\cdot),
\]
where $\alpha_j$ is the highest root at the $j$-th layer of the Joyce decomposition.
\end{prop}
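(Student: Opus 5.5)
The plan is to compute $\theta$ directly from the torsion formula $\theta(X)=-\tfrac12\sum_i H(IX,e_i,Ie_i)$ recorded in the conventions. We use that for a bi-invariant metric with left-invariant complex structure the Bismut connection is the flat connection $\nabla^B_XY=0$ on left-invariant fields. In that case $H(X,Y,Z)=-g([X,Y],Z)$, up to the sign convention fixed by $g(\nabla^B_XY,Z)=g(\nabla_XY,Z)+\tfrac12H(X,Y,Z)$ and $\nabla_XY=\tfrac12[X,Y]$. Hence
\[
\theta(X)=\tfrac12\sum_i g([IX,e_i],Ie_i).
\]
Since $\mathfrak{b}\oplus\ell\mathfrak u(1)$ is central in the relevant sense and the sum is a trace, $\theta(X)=-\tfrac12\operatorname{tr}(I\circ \mathrm{ad}_{IX})$. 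This vanishes whenever $\mathrm{ad}_{IX}$ is skew and commutes appropriately, so the only contributions come from $IX\in\mathfrak d_j$.

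First we check that $\theta$ vanishes on $\mathfrak{d}_j\oplus\mathfrak f_j$ and on $\mathfrak b$ directions that are not some $e^j_1$. We use $g$-orthogonality of the Joyce decomposition \eqref{eqn:Joycedec} and the commutation relations (J1)--(J4): for $X=e_1^j$ we get $IX=e_2^j$, while for $X$ in the other summands $IX$ lies in $\mathfrak f$, in $\mathfrak d$ directions orthogonal to $e_2^j$, or in the centre. Ad-invariance of $g$ then kills those traces. Next, for $X=e^j_1$, we split the trace $\sum_i g([e_2^j,e_i],Ie_i)$ over the layers. On $\mathfrak h_j$ we use $[e_2^j,e_3^j]=2e_4^j$ and $Ie_3^j=e_4^j$, giving a contribution $2\cdot 2\lambda_j^2/\lambda_j^2$ after normalising the orthonormal frame. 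On $\mathfrak f_j$, $I=\mathrm{ad}_{e_2^j}$ by construction, so the contribution is $\operatorname{tr}(-I^2)|_{\mathfrak f_j}$ with appropriate normalisation, i.e.\ a multiple of $\dim\mathfrak f_j=4\dim_{\mathbb H}\mathfrak f_j$. On layers $i<j$ the element $e_2^j$ acts on $\mathfrak f_i$, but that action commutes with $I|_{\mathfrak f_i}=\mathrm{ad}_{e^i_2}$ and is skew, so its trace against $I$ combines with the trace from $\mathfrak f_i$, and we must check that it cancels. Other layers contribute zero by (J2), (J3).

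Finally we convert the normalisation to roots. Under the Killing-type metric, $e_2^j$ corresponds to $i$ times the coroot $H_{\alpha_j}$ scaled so that $\mathrm{ad}_{e^j_2}$ acts on $\mathfrak f_j$ with eigenvalues $\pm i$. Then $\lambda_j^2=g(e_2^j,e_2^j)$ is expressed as $4/(\alpha_j,\alpha_j)$ in the induced inner product on $\mathfrak h^*$. Dualising $\theta(e_1^j)$ via $g(e_1^j,e_1^j)=\lambda_j^2$ produces the coefficient $\tfrac{(\alpha_j,\alpha_j)}{2}(1+\dim_{\mathbb H}\mathfrak f_j)$. The main obstacle is this bookkeeping. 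We need consistent sign and normalisation conventions for $H$ and the root inner product, and we need to verify that the cross-layer terms (from $e_2^j$ acting on $\mathfrak f_i$, $i<j$) vanish. For the latter we will first try the argument that $e_2^j$ lies in the centraliser $\mathfrak c_i$, so $\mathrm{ad}_{e_2^j}$ commutes with $\mathrm{ad}_{e_2^i}=I$ on $\mathfrak f_i$. Then $\operatorname{tr}(I\,\mathrm{ad}_{e^j_2})$ is computed on weight spaces $\mathfrak g_\beta$, $(\beta,\alpha_i)\neq0$, and the contributions of $\beta$ and $s_{\alpha_i}\beta$ cancel.
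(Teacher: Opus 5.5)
The paper does not prove this proposition; it imports the formula from the two cited sources. Your direct computation is therefore a genuinely different, self-contained route, and its skeleton is sound.

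\textbf{What checks out.} The left-invariant flat connection is the Bismut connection, so $H(X,Y,Z)=-g([X,Y],Z)$ and $\theta(X)=-\tfrac12\operatorname{tr}(I\circ\mathrm{ad}_{IX})$. For $X=e_1^j$ the diagonal blocks give $-4$ on $\mathfrak h_j$ and $-4\dim_{\mathbb H}\mathfrak f_j$ on $\mathfrak f_j$, so $\theta(e_1^j)=2(1+\dim_{\mathbb H}\mathfrak f_j)$. Three further facts then produce exactly the stated coefficient:
\begin{itemize}
\item $g(e_2^j,e_2^j)=4/(\alpha_j,\alpha_j)$, with roots measured by the inner product dual to $g$. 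This holds by $\mathrm{Ad}$-invariance even if $e_2^j\notin\mathfrak h$.
\item The $e_1^j$ are mutually orthogonal, since $g(e_1^j,e_1^k)=g(e_2^j,e_2^k)$.
\item Your $s_{\alpha_i}$-pairing kills the cross-layer terms. Indeed $s_{\alpha_i}$ is a fixed-point-free involution of the roots of $\mathfrak f_i$ that flips $\langle\beta,\alpha_i^\vee\rangle$ and, because $(\alpha_i,\alpha_j)=0$, fixes $\langle\beta,\alpha_j^\vee\rangle$.
\end{itemize}
The citation buys the paper brevity. Your computation buys an explicit check of normalization and sign in the paper's own conventions.

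\textbf{Two steps are not justified as written.}

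First, $\mathfrak b$ is not central. It lies in the Cartan and acts on each $\mathfrak f_k$ through the weights $\beta|_{\mathfrak b}$, which are nonzero already for $\SU(3)$ (there $\ell=0$ and $e_1^1\in\mathfrak b$). So $\theta(e_2^j)=\tfrac12\operatorname{tr}(I\,\mathrm{ad}_{e_1^j})$ does not vanish for free. It does vanish, by the same pairing argument on each $\mathfrak f_k$, now using $\alpha_k|_{\mathfrak b}=0$ from (J1).

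Second, ``ad-invariance of $g$ kills those traces'' is not an argument: the trace of a product of two skew endomorphisms need not vanish. A clean fix is as follows. Checking block by block with (J1)--(J4), $\mathrm{ad}_{e_2^j}$ commutes with $I$ on all of $\ell\mathfrak u(1)\oplus\mathfrak g$. Hence $\operatorname{tr}(I\,\mathrm{ad}_{[e_2^j,Y]})=\operatorname{tr}([I,\mathrm{ad}_{e_2^j}]\,\mathrm{ad}_Y)=0$. Since $\mathrm{ad}_{e_2^j}$ is invertible on $\langle e_3^j,e_4^j\rangle\oplus\mathfrak f_j$, this gives $\theta=0$ on $e_3^j$, $e_4^j$ and $\mathfrak f_j$.

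\textbf{A uniform alternative for the remaining terms.} Writing $e_2^k=\tfrac12[e_3^k,e_4^k]$ gives $\operatorname{tr}_{\mathfrak f_k}(\mathrm{ad}_{e_2^k}\mathrm{ad}_Y)=\tfrac12\operatorname{tr}_{\mathfrak f_k}(\mathrm{ad}_{e_3^k}[\mathrm{ad}_{e_4^k},\mathrm{ad}_Y])=0$ for every $Y$ centralizing $\mathfrak d_k$ and preserving $\mathfrak f_k$. This disposes of the $\mathfrak b$-terms and the cross-layer terms at once. It also avoids the tacit assumption that all $e_2^k$ lie in a common Cartan subalgebra, which your weight-space computation needs; if you keep that computation, state the assumption as a WLOG via $\mathrm{Ad}$ of $\prod_k\exp\mathfrak d_k$.

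Finally, $[\mathfrak d_j,\mathfrak f_k]\subseteq\mathfrak f_k$ for $k<j$ is not among (J1)--(J4). It follows from $\mathfrak d_j\subset\mathfrak c_k$ together with the fact that $\mathfrak f_k$ is the orthogonal complement of the subalgebra $\mathfrak c_k\oplus\mathfrak d_k$ in the $k$-th level algebra, and you should say so.
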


\begin{thm} \label{thm:StrongHKTEinstein}
Let $G$ be a simple Lie group and let $(\mathbb{T}^\ell \times G, I,J,K,g)$ be any strong HKT manifold constructed as above. Then $(I,J,K,g)$ is HKT-Einstein if and only if $\mathbb{T}^\ell \times G = S^1\times \mathrm{SU}(2)$ or $\mathrm{SU}(3)$.
\end{thm}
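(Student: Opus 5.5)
The plan is to reduce the HKT-Einstein condition $\Phi(g)=\lambda\omega$ to a finite set of algebraic identities among the constants of the Joyce decomposition \eqref{eqn:Joycedec}, and then to check these identities case by case over the simple Lie algebras.

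\textbf{Step 1: a formula for $\rho$.} Since $g$ is bi-invariant and everything is left-invariant, I start from the identity \eqref{eqn:rhoChern}, $\rho=d(I\theta)$, together with the explicit Lee form of Proposition \ref{p:hkteinstein}. Write $\theta=\sum_j c_j\, g(e_1^j,\cdot)$ with
\[
c_j=\tfrac{(\alpha_j,\alpha_j)}{2}\bigl(1+\dim_{\mathbb H}\mathfrak f_j\bigr).
\]
Because $Ie_1^j=e_2^j$, the form $I\theta$ is, up to a fixed sign, $\sum_j c_j\, g(e_2^j,\cdot)$. For a left-invariant $1$-form $\beta$ we have $d\beta(X,Y)=-\beta([X,Y])$. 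Using ad-invariance of $g$, this gives
\[
\rho(X,Y)=\pm\sum_j c_j\, g\bigl([e_2^j,X],Y\bigr).
\]
So $\rho$ is the $2$-form associated with the skew endomorphism $A=\sum_j c_j\,\mathrm{ad}(e_2^j)$.

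\textbf{Step 2: $\Phi$ on each block.} Next I compute $\Phi=\tfrac12(\rho-J\rho)$ on each summand $\mathfrak h_j=\mathbb R e_1^j\oplus\mathfrak d_j$ and on each $\mathfrak f_j$. All of these summands are $A$-invariant by (J1)--(J4).
\begin{itemize}
\item On $\mathfrak h_j$, $\mathrm{ad}(e_2^j)$ kills $e_1^j,e_2^j$ and rotates $e_3^j,e_4^j$ with weight $2$. The $J$-averaging spreads this evenly, so $\Phi|_{\mathfrak h_j}=c_j\,\mu_j\,\omega|_{\mathfrak h_j}$, with $\mu_j$ an explicit multiple of $\lambda_j^{-2}$ fixed by the normalisation of $e_2^j$.
\item On $\mathfrak f_j$, the term $\mathrm{ad}(e_2^j)=I$ yields exactly a multiple of $\omega$.
\item The layers $k>j$ also contribute: $\mathfrak d_k\subset\mathfrak c_j$ may act nontrivially on $\mathfrak f_j$. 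These actions commute with $\mathrm{ad}(\mathfrak d_j)$, hence with $I,J,K$ on $\mathfrak f_j$. They are therefore quaternion-linear skew maps, and their $J$-anti-invariant part is what survives in $\Phi$.
\end{itemize}
The HKT-Einstein condition thus becomes a system of equations: one equality of constants on each $\mathfrak h_j$ (all equal to $\lambda$), plus the vanishing of the non-scalar part of the deeper-layer contributions on each $\mathfrak f_j$ (this is equation \eqref{eqn:HKT_Einstein}).

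\textbf{Step 3: case analysis.} Here the known Joyce decompositions enter.
\begin{itemize}
\item For $\mathfrak{su}(2)$ we have $m=1$, $\mathfrak f_1=0$, $r=1$, so $\ell=1$. The only equation is on $\mathfrak h_1$, and it holds: this is the Hopf case.
\item For $\mathfrak{su}(3)$ we have $m=1$, $\dim_{\mathbb H}\mathfrak f_1=1$, $\mathfrak b$ one-dimensional and $\ell=0$. There are no deeper layers, and the $\mathfrak h_1$ and $\mathfrak f_1$ coefficients coincide.
\item For all remaining simple algebras ($\mathfrak{su}(n)$ with $n\ge4$, $\mathfrak{so}(n)$, $\mathfrak{sp}(n)$, and the exceptional algebras) I will show that either some $\mathfrak f_j$ carries a nonscalar action of a deeper $\mathfrak d_k$, or the coefficients $c_j\mu_j$ differ between layers. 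In the latter case the mismatch comes from the factor $1+\dim_{\mathbb H}\mathfrak f_j$ varying with $j$, while the root lengths (and hence the relevant $\lambda_j$) are fixed by the Killing form. Either alternative contradicts the system from Step 2.
\end{itemize}

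\textbf{Main obstacle.} The hardest step is the bookkeeping of the cross-layer terms on $\mathfrak f_j$ and the uniform treatment of all simple types. I would first handle $\mathfrak{su}(n)$, where the recursion is $\mathfrak{su}(n)\supset\mathfrak{su}(n-2)$ with $\dim_{\mathbb H}\mathfrak f_1=n-2$. There, comparing just the first two layers already forces $n\le 3$. I would then argue similarly for the other types using the tables of Joyce decompositions.
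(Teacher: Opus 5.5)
Your outline follows the paper's: Lee form from Proposition~\ref{p:hkteinstein}, then $\rho=dI\theta$, then a comparison of layer constants, then a case check. However, Step~2 contains an error that undermines Step~3.

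\textbf{The cross-layer terms vanish.} The deeper-layer terms on $\mathfrak f_j$ do not survive in $\Phi$. For $k>j$, $\mathfrak d_k\subset\mathfrak c_j$ preserves $\mathfrak f_j$. Moreover $A=\mathrm{ad}(e_2^k)|_{\mathfrak f_j}$ commutes with $J|_{\mathfrak f_j}=\mathrm{ad}(e_3^j)$ by (J2) and the Jacobi identity. Hence $\beta(X,Y)=g(AX,Y)$ satisfies $\beta(JX,JY)=g(JAX,JY)=\beta(X,Y)$. So $\beta$ is entirely $J$-invariant and is killed by $\rho\mapsto\tfrac12(\rho-J\rho)$. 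You observed that these maps are quaternion-linear, but that is exactly why their $J$-anti-invariant part is zero, not what survives.

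\textbf{The normalisation $\mu_j$.} $\mu_j$ is a universal constant, not a multiple of $\lambda_j^{-2}$. Both $\Phi(e_3^j,e_4^j)$ and $\omega(e_3^j,e_4^j)$ are proportional to $\lambda_j^2$. The root-length dependence is already inside $c_j$, since the norm $\lambda_j^2$ of the standard basis of $\mathfrak d_j$ is inversely proportional to $(\alpha_j,\alpha_j)$. An extra factor $\lambda_j^{-2}$ therefore counts it twice.

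\textbf{What Step~2 should give.} With the paper's conventions, $\Phi=c_j\,\omega$ on each block $\mathfrak h_j\oplus\mathfrak f_j$, with no cross terms. So HKT-Einstein is \emph{equivalent} to $c_j$ being independent of $j$. That, not a vanishing condition on the $\mathfrak f_j$, is \eqref{eqn:HKT_Einstein}; the paper obtains it simply by evaluating on $(e_1^j,e_2^j)$. Consequently the first alternative in your Step~3 (a nonscalar action of a deeper $\mathfrak d_k$ on some $\mathfrak f_j$) never yields a contradiction, and no group can be excluded that way. On the positive side, the corrected block computation gives the converse directly, for instance for $\mathrm{SU}(3)$ with its single layer, where the paper instead cites \cite{brienza2026}.

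\textbf{The case analysis is incomplete.} This is the real content of the theorem, and you carry it out only in type $A$. There your comparison $1+(n-2)\neq 1+(n-4)$ of the first two layers is correct and treats $\mathfrak{su}(2k)$ and $\mathfrak{su}(2k+1)$ uniformly (the paper uses the strict decrease of $\dim_{\mathbb H}\mathfrak f_j$ for the latter). For the remaining types, ``argue similarly using the tables'' must be replaced by concrete facts showing that the $c_j$ genuinely vary:
\begin{itemize}
\item $\mathfrak f_1\neq 0$ unless $\mathfrak g=\mathfrak{su}(2)$.
\item For $D_n$, $E_{6,7,8}$, $C_n$ (where $\alpha_i=2e_i$) and $F_4$, all layer roots are long and some $\mathfrak f_j$ is trivial \cite{BFGV}.
\item For $B_n$ with $n\ge 3$, the first two roots $e_1\pm e_2$ are both long and $\mathfrak f_2=0$.
\item For $G_2$ the second root is short; normalising long roots to length $2$, one compares $c_1=3$ with $c_2=\tfrac13$.
\end{itemize}
These are exactly the inputs the paper's proof supplies.
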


\begin{proof}
By Proposition \ref{p:hkteinstein}, the HKT-Einstein condition is
\begin{equation} \label{eqn:HKT-Joyce}
   -\sum_{j=1}^m \frac{(\alpha_j, \alpha_j)}{4} \left(1+\dim_{\mathbb{H}}(\mathfrak{f}_j)\right)g(e_2^j,[X,Y]-[JX,JY])=\lambda \omega(X,Y).  
\end{equation}
We evaluate equation \eqref{eqn:HKT-Joyce} on $(e_1^j,e_2^j)$ for each $j=1,\dots,m$. If $g$ is HKT-Einstein then
\begin{equation} \label{eqn:HKT_Einstein}
    \frac{(\alpha_j, \alpha_j)}{2} \left(1+\dim_{\mathbb{H}}(\mathfrak{f}_j)\right)=\frac{(\alpha_k, \alpha_k)}{2} \left(1+\dim_{\mathbb{H}}(\mathfrak{f}_k)\right)
\end{equation}
for all $j,k=1,\dots,m$. Using this necessary condition, we prove the claimed classification.

We first show that $S^1\times \mathrm{SU}(2)$ and $\mathrm{SU}(3)$ are both HKT-Einstein. The first claim has been already discussed, while for $\mathrm{SU}(3)$ the result follows from \cite[Proposition 7.1]{brienza2026}. We now exclude all remaining cases. In the following, we will always assume that $g$ is rescaled so that long roots have norm $2$.

We shall use the following facts. First, except for the Hopf surface $S^1\times \SU(2)$, we have $\dim_{\mathbb H}(\mathfrak f_1)>0$. Second, apart from the groups $\SU(2k+1)$ (i.e., type $A_{2k}$ with $k\ge 2$), every Joyce decomposition contains at least one trivial $\mathfrak{f}_i$ factor \cite{BFGV}.

Assume that $G \neq \SU(2)$. If all roots $\alpha_j$ have the same length, then the HKT-Einstein equation \eqref{eqn:HKT-Joyce} implies that each $\mathfrak f_j$ has the same dimension. If a trivial summand $\mathfrak f_j$ exists, this is impossible, since $\mathfrak f_1$ is non-trivial. 

Since types $A_n$ $(n \ge 3)$, $D_n$ $(n \ge 4)$, $E_6$, $E_7$, and $E_8$ are simply laced, all roots have the same length. In each corresponding simple Lie group, except for $\mathrm{SU}(2k+1)$, we may apply the argument above. For $\mathrm{SU}(2k+1)$ $k \ge 2$, a different argument is needed: one uses the fact that $\dim_{\mathbb{H}}(\mathfrak{f}_j) < \dim_{\mathbb{H}}(\mathfrak{f}_{j-1})$ to again rule out \eqref{eqn:HKT_Einstein} \cite{BFGV}.

For type $C_n$ ($n>1$), the successive highest roots are $\alpha_i=2e_i$, all long, hence their length is constant. Since a trivial summand exists, the same contradiction applies.

For type $B_n$ ($n\ge 3$), the roots $\alpha_1=e_1+e_2$ and $\alpha_2=e_1-e_2$ are both long, so $(\alpha_1,\alpha_1)=(\alpha_2,\alpha_2)$, while $\mathfrak f_2$ is trivial. Thus $\dim_{\mathbb H}(\mathfrak f_1)>0$ and $\dim_{\mathbb H}(\mathfrak f_2)=0$, giving the desired contradiction.

For type $F_4$, the roots $\alpha_1,\alpha_2,\alpha_3,\alpha_4$ are all long, so their length is constant. Since a trivial $\mathfrak f_j$ factor exists, we again get a contradiction.

For type $G_2$ the argument is slightly different. The highest root $\alpha_1$ is long and the next highest root $\alpha_2$ is short, with $(\alpha_2,\alpha_2)=\frac{2}{3}$. Moreover, $\dim_{\mathbb H}(\mathfrak f_1)=2$ and $\dim_{\mathbb H}(\mathfrak f_2)=0$. Substituting into \eqref{eqn:HKT-Joyce} gives $3=1/3$, absurd.

This completes the proof.
\end{proof}
We turn our attention to the semisimple case.
\begin{thm} \label{t:sHKTE}
Let $G$ be a non-trivial compact semisimple Lie group and let $(\mathbb{T}^\ell \times G, I,J,K,g)$ be any strong HKT manifold constructed as above. Then $(I,J,K,g)$ is HKT-Einstein if and only if
\[
\mathbb{T}^\ell \times G \cong \bigl( S^1 \times \mathrm{SU}(2) \bigr) \times \dots \times \bigl( S^1 \times \mathrm{SU}(2) \bigr) \times \mathrm{SU}(3) \times \dots \times \mathrm{SU}(3).
\]
\end{thm}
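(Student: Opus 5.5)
The plan is to reduce the semisimple case to the simple case of Theorem \ref{thm:StrongHKTEinstein}, using the necessary condition \eqref{eqn:HKT_Einstein} across all layers of the Joyce decomposition. Write $G = G_1 \times \dots \times G_p$ (up to finite cover) with each $G_a$ simple. The Joyce decomposition of $\mathfrak g$ is the concatenation of the Joyce decompositions of the simple ideals $\mathfrak g_a$, since root systems split orthogonally and the recursion stays inside one simple factor. The torus $\mathbb T^\ell$ is distributed so that each simple block $\mathfrak g_a$ receives $\ell_a = 2m_a - r_a$ of the $\mathfrak u(1)$ directions, with $\sum \ell_a = \ell$. The hypercomplex structure and the bi-invariant metric then respect the splitting $\ell\mathfrak u(1)\oplus\mathfrak g = \bigoplus_a (\ell_a \mathfrak u(1)\oplus \mathfrak g_a)$. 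The one caveat is that the basis $\mathcal B$ of $\ell\mathfrak u(1)\oplus\mathfrak b$ may mix the abelian directions across factors. Using the freedom in \S\ref{s:strongHKTE} (orthogonal choice of $e_1^j$ with respect to $g$), I would first argue that, after a $g$-orthogonal and hypercomplex change of basis of the abelian part, the structure is a product. If this fails in general, I would instead work layer-by-layer directly, because \eqref{eqn:HKT-Joyce} only involves the pairs $(e_1^j,e_2^j)$.

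\textbf{Necessity.} Evaluating \eqref{eqn:HKT-Joyce} on $(e_1^j,e_2^j)$, exactly as in the proof of Theorem \ref{thm:StrongHKTEinstein}, gives \eqref{eqn:HKT_Einstein} for all $j,k$ ranging over all layers of all simple factors. Applied within a single simple factor $\mathfrak g_a$, this is exactly the condition that was shown in the proof of Theorem \ref{thm:StrongHKTEinstein} to fail unless $\mathfrak g_a \cong \mathfrak{su}(2)$ or $\mathfrak{su}(3)$. That case analysis used only the intra-factor equalities. Hence each simple factor is $\SU(2)$ or $\SU(3)$. Counting ranks, $\SU(2)$ has $r=1$, $m=1$, so $\ell_a=1$; $\SU(3)$ has $r=2$, $m=1$, so $\ell_a=0$. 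Thus $\mathbb T^\ell\times G$ is a product of copies of $S^1\times\SU(2)$ and $\SU(3)$.

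\textbf{Sufficiency and the cross-factor condition.} For the converse I would compute $\Phi(g)=\tfrac12(\rho-J\rho)$ with $\rho=dI\theta$ via Proposition \ref{p:hkteinstein}. For a product of HKT-Einstein factors, $\Phi$ splits as the sum of the factors' $\Phi$'s, so the product is HKT-Einstein precisely when all factor constants agree. The main point requiring care is that \eqref{eqn:HKT_Einstein} across factors forces $\tfrac{(\alpha,\alpha)}{2}(1+\dim_{\mathbb H}\mathfrak f)$ to coincide. For $\SU(2)$ this is $\tfrac{(\alpha,\alpha)}2$, and for $\SU(3)$ it is $(\alpha,\alpha)$ (since $\dim_{\mathbb H}\mathfrak f_1=1$). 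Because the bi-invariant metric may be rescaled independently on each simple factor, and the $\lambda_j$ on the $\mathfrak u(1)$ parts are free, these constants can always be matched. The statement is up to isomorphism, so I read it as asserting existence of such a metric for the listed groups, and as a necessary condition on $G$ otherwise.

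I expect the main obstacle to be the reduction to a product structure when $\mathcal B$ mixes abelian directions of different factors. I would handle it by noting that the computation of \eqref{eqn:HKT-Joyce} on $(e_1^j,e_2^j)$ is insensitive to this mixing. Within each simple factor, the identity $g(e_2^j,[X,Y])$ only sees the $\mathfrak d_j$ components, so necessity goes through unchanged.
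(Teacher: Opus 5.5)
Your proposal is correct and follows the paper's overall strategy: reduce to the simple factors and reuse the case analysis of Theorem~\ref{thm:StrongHKTEinstein}. The execution differs in a way worth noting.

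The paper chooses the basis $\mathcal B$ adapted to $G=G_1\times G_2$. Then $\ell\mathfrak u(1)\oplus\mathfrak g$ becomes a direct sum of simple Joyce blocks, and the paper says the HKT-Einstein condition reduces to each block. You instead read off \eqref{eqn:HKT_Einstein} for all layers at once from \eqref{eqn:HKT-Joyce} evaluated on $(e_1^j,e_2^j)$. Because $[e_1^j,e_2^j]=0$ by (J1) and the $\mathfrak d_j$ are mutually orthogonal, this needs no splitting. It therefore also covers bases $\mathcal B$ that pair abelian directions of one factor with layers of another. The construction allows such pairings: on $S^1\times\SU(3)\times\SU(2)$ one may pair the $\mathfrak u(1)$ direction with $\mathfrak d_1\subset\mathfrak{su}(3)$, and the one-dimensional $\mathfrak b\subset\mathfrak{su}(3)$ with the $\mathfrak{su}(2)$ layer. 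So your necessity argument is genuinely more general than the paper's.

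For the same reason, your first idea of passing to a product by changing $\mathcal B$ is not legitimate. Changing $\mathcal B$ changes the hypercomplex structure, and necessity must hold for every $\mathcal B$. The layer-wise route is the one to keep.

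For the converse, you rightly observe that a product is HKT-Einstein only when the factor constants agree. The ``if'' direction must therefore be read as existence; the paper's ``reduces to each factor'' passes over this point. One small correction: the norms on the $\mathfrak u(1)$ directions are not free, since the hyperhermitian condition forces $g(e_1^j,e_1^j)=g(e_2^j,e_2^j)$. Independently rescaling the bi-invariant metric on each simple factor still suffices to match the constants, because rescaling a factor's metric by $c$ divides its HKT-Einstein constant by $c$.
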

\begin{proof} We give the proof in the case $G = G_1 \times G_2$.  We emphasize that while the Joyce decomposition naturally splits, a priori our metric is determined by a basis $\mathcal B$ of the abelian part, and in particular the appended torus factors need not be orthogonal.
The Joyce decomposition of $\mathfrak{g}$ is then given by
\[
\mathfrak{g} = \biggl( \mathfrak{b}_1 \oplus \bigoplus_{i=1}^{m_1} \mathfrak{d}_i \oplus \bigoplus_{i=1}^{m_1} \mathfrak{f}_i \biggr) \oplus \biggl( \mathfrak{b}_2 \oplus \bigoplus_{i=1}^{m_2} \mathfrak{d}_{m_1+i} \oplus \bigoplus_{i=1}^{m_2} \mathfrak{f}_{m_1+i} \biggr),
\]
and the corresponding $\mathbb{T}^\ell \times G$ is given by 
\[
\mathbb{T}^\ell \times G = \mathbb{T}^{\ell_1} \times \mathbb{T}^{\ell_2} \times G_1 \times G_2,
\]
where $\ell = \ell_1 + \ell_2$.

Let $g$ be any bi-invariant metric on $G$, which therefore coincides with $g_1 \times g_2$, where $g_i$ is a multiple of the negative Killing-Cartan form on $\mathfrak{g}_i$. For each $j = 1,\dots,m_1+m_2$, choose a standard basis $(e_2^j,e_3^j,e_4^j)$ of $\mathfrak{d}_j$ such that
\[
g(e_2^j,e_2^j)=g(e_3^j,e_3^j)=g(e_4^j,e_4^j)=\lambda_j^2.
\]
Take a basis
\[
\mathcal{B}=\bigl( e_1^1,\dots,e_1^{\ell_1},\; e_1^{\prime 1},\dots,e_1^{\prime \ell_2},\; e_1^{\ell_1+1},\dots,e_1^{m_1},\; e_1^{\prime \ell_2+1},\dots,e_1^{\prime m_2} \bigr)
\]
of $(\ell_1+\ell_2)\mathfrak{u}(1) \oplus \mathfrak{b} \cong \mathbb{R}^{m_1+m_2}$, where:
\begin{itemize}
    \item $(e_1^{\ell_1+1},\dots,e_1^{m_1})$ is a basis of $\mathfrak{b}_1$ that is $g_1$-orthogonal with
    \[
    g_1(e_1^{\ell_1+j},e_1^{\ell_1+j}) = \lambda_{\ell_1+j}^2 \qquad \text{for } j = 1,\dots,m_1-\ell_1;
    \]
    \item $(e_1^{\prime \ell_2+1},\dots,e_1^{\prime m_2})$ is a basis of $\mathfrak{b}_2$ that is $g_2$-orthogonal with
    \[
    g_2(e_1^{\prime \ell_2+j},e_1^{\prime \ell_2+j}) = \lambda_{\ell_2+j}^2 \qquad \text{for } j = 1,\dots,m_2-\ell_2.
    \]
\end{itemize}
Extend $g$ to a positive definite bilinear form on $\ell_1\mathfrak{u}(1) \oplus \ell_2\mathfrak{u}(1) \oplus \mathfrak{b}$ by setting
\[
g(e_1^{j},e_1^{j}) = \lambda_j^2 \quad \text{for } j = 1,\dots,\ell_1,
\qquad
g(e_1^{\prime j},e_1^{\prime j}) = \lambda_{m_1+j}^2 \quad \text{for } j = 1,\dots,\ell_2.
\]
Clearly, $g$ is hyperhermitian with respect to the Joyce hypercomplex structure defined by $\mathcal{B}$, as already observed. 
Consequently,
\[
(\ell_1+\ell_2)\mathfrak{u}(1) \oplus \mathfrak{g}_1 \oplus \mathfrak{g}_2
= \Bigl( \mathbb{R}\langle e_1^1,\dots,e_1^{\ell_1} \rangle \oplus \mathfrak{g}_1 \Bigr)
\oplus \Bigl( \mathbb{R}\langle e_1^{\prime 1},\dots,e_1^{\prime \ell_2} \rangle \oplus \mathfrak{g}_2 \Bigr),
\]
where each summand is the Lie algebra of a Joyce hypercomplex manifold associated with a simple Lie group and endowed with a strong HKT structure. The HKT-Einstein condition hence reduces to the HKT-Einstein condition on each factor. Applying Theorem \ref{thm:StrongHKTEinstein}, we get the claimed classification.
\end{proof}

\subsection{HKT-Einstein metrics on the standard Hopf surface} \label{s:Hopf}

Let $M^4=S^1\times \SU(2)$ be the Hopf surface and let
$\bar g=g_{\mathrm{Hopf}}$ be the standard locally
conformally flat metric whose HKT--Einstein constant is one.  In this section we construct an infinite sequence of HKT-Einstein metrics in this conformal class which blows up in $C^0$ norm.

We achieve this in a symmetric ansatz.  In particular we impose $g=e^u\bar g$, where $u$ is invariant under
the $S^1$-factor and under the standard $\SO(3)$-action
on $\SU(2)\simeq S^3$.  By a standard computation, for such an $u$ to define a solution of (\ref{eqn:hkteinstein4d}), we require
\begin{equation}
 u''+2\cot r\,u'=4(1-e^u).                                      \label{eq:radial}
\end{equation}
Here $r\in[0,\pi]$ is the standard polar coordinate on the $\SU(2)$-factor.
A radial solution is smooth at a pole precisely when it is even in the
geodesic distance from that pole.

We will construct the metrics to have a further $\mathbb Z_2$ symmetry given by reflection over the equator.  In particular we solve the problem on the northern hemisphere with a Neumann
condition at the equator. Indeed, if we construct a solution to (\ref{eq:radial}) on $[0, \pi/2]$ with furthermore
\begin{equation}
 u'(0)=u'\!\left(\frac\pi2\right)=0,
 \label{eq:halfBC}
\end{equation}
then the reflection
\[
 \widetilde u(r)=
 \begin{cases}
 u(r),&0\le r\le \pi/2,\\
 u(\pi-r),&\pi/2\le r\le \pi,
 \end{cases}
\]
is smooth and solves \eqref{eq:radial} on all of $[0,\pi]$, with the correct boundary conditions at both $r = 0$ and $r= \pi$.

We first construct solutions with certain boundary data at $r = 0$, and show that they always exist at least up to time $\pi/2$:
\begin{lem} \label{l:ODEexists} Given $a \in \mathbb R$ there exists a unique solution to (\ref{eq:radial}) which is regular at $r = 0$ and satisfies
\[
 u_a(0)=a,\qquad u_a'(0)=0.
\]
\end{lem}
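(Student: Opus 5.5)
The lemma has two parts. First, local existence and uniqueness at the singular point $r=0$. Second, the claim stated just before it: the solution exists at least on $[0,\pi/2]$.

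For the local part, I will rewrite \eqref{eq:radial} in divergence form. Since $(\sin^2 r\, u')' = \sin^2 r\,(u''+2\cot r\, u')$, equation \eqref{eq:radial} becomes
\[
(\sin^2 r\, u')' = 4\sin^2 r\,(1-e^u).
\]
Integrating and imposing $u(0)=a$, $u'(0)=0$, a regular solution is equivalent to a continuous fixed point of
\[
\mathcal T u(r) = a + \int_0^r \frac{4}{\sin^2 s}\int_0^s \sin^2 \sigma\,\bigl(1-e^{u(\sigma)}\bigr)\,d\sigma\,ds .
\]
The kernel obeys $\sin^{-2}s\int_0^s \sin^2\sigma\,d\sigma \le C s$ for $s\le \pi/2$. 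Hence $\mathcal T$ maps a $C^0$ ball around $a$ to itself and is a contraction there for $r\le \delta(a)$, because $e^u$ is Lipschitz on bounded sets. This gives a unique continuous solution near $0$.

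The formula then shows $u'(r) = O(r)$ with $u'(r)/r \to \tfrac43(1-e^a)$. This yields $u\in C^2$ up to $0$ with $u'(0)=0$, and smoothness and evenness follow by the usual bootstrap, or by viewing $u$ as a radial solution of the smooth elliptic equation on $S^3$. Uniqueness among regular solutions follows from the same integral formulation, since any regular solution satisfies it. Away from $r=0$ the equation is a smooth ODE, so the solution continues uniquely for as long as $u,u'$ stay bounded.

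The main obstacle is ruling out blow-up before $r=\pi/2$. The coefficient $\cot r$ is smooth on $(0,\pi)$, so the only possible failure is $u$ or $u'$ becoming unbounded at some $r_*\le\pi/2$. I will use the energy
\[
E(r) = \tfrac12 u'^2 + 4\bigl(e^u - u\bigr).
\]
Differentiating and using the equation,
\[
E' = u'\bigl(u'' + 4e^u - 4\bigr) = -2\cot r\, u'^2 \le 0 \quad \text{on } (0,\pi/2].
\]
Thus $E(r)\le E(0)=4(e^a-a)$. Since $e^u-u\ge 1$ and $e^u - u\to\infty$ as $|u|\to\infty$, this bounds both $|u|$ and $|u'|$ uniformly on $[0,\pi/2]$ in terms of $a$. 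Standard continuation then gives existence on $[0,\pi/2]$, and in fact on $[0,\pi)$.

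If the damping sign near $r=\pi/2$ needs care, note that $\cot r\ge 0$ exactly on $(0,\pi/2]$, which is precisely the range needed. This explains why the construction works on the hemisphere and reflects across the equator.
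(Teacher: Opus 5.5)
Your proof is correct, but at the singular point it takes a different route from the paper. The paper never writes an integral equation at $r=0$. Instead it solves the regular Cauchy problems $u(\epsilon_i)=a$, $u'(\epsilon_i)=0$ with $\epsilon_i\to 0$. It then uses the same Lyapunov functional as you (its $L=\tfrac12(u')^2+4(e^u-u-1)$ is your $E$ minus $4$, with $L'=-2\cot r\,(u')^2\le 0$ on $(0,\pi/2]$) to get uniform bounds on $[\epsilon_i,\pi/2]$, and extracts a limit by compactness.

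Your divergence form $(\sin^2 r\,u')'=4\sin^2 r\,(1-e^u)$ and the Volterra contraction give several things the compactness argument leaves implicit:
\begin{itemize}
\item uniqueness among regular solutions, which a subsequential limit does not give and the paper never addresses;
\item the fact that the solution actually attains $u(0)=a$, $u'(0)=0$, with $u'(r)/r\to\tfrac43(1-e^a)$;
\item continuous dependence on $a$, which is exactly what makes the shooting map $\Psi(a)=u_a'(\pi/2)$ continuous, as the paper asserts.
\end{itemize}
The uniform bound on $u''$ near $r=\epsilon_i$, which the paper attributes to ``the ODE itself'', also needs more than that: it requires controlling $\cot r\,u'$, i.e.\ an estimate $|u'(r)|\lesssim r$. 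That estimate comes precisely from integrating your divergence form. The paper's route is quicker to state and avoids the singular integral operator, but yours is the more complete argument.

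One small caveat concerns your aside about existence on $[0,\pi)$. It does not follow from monotonicity of $E$, since $\cot r<0$ on $(\pi/2,\pi)$. There you would need Gronwall: $E'\le 4|\cot r|\,E$ gives $E(r)\le E(\pi/2)\sin^{-4} r$. This is not needed for the lemma.
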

\begin{proof} We first construct the solution to boundary value problem.  As the coefficient $\cot(r)$ is singular at the origin, we first construct solutions starting from $r = \epsilon_i \to 0$ and show that they limit to the required solution.  For fixed $i$ construct $u_a^i(r)$ the unique solution to (\ref{eq:radial}) with initial data
\begin{align*}
    u^i_a(\epsilon_i) = a, \qquad (u^i_a)'(\epsilon_i) = 0.
\end{align*}
To show these solutions converge we exploit a Lyapunov functional which we will use throughout this section.  Let
\begin{align*}
    L(v) := \frac12(v')^2+4(e^{v}-v-1).
\end{align*}
A computation shows that for $u$ a solution of (\ref{eq:radial}),
\begin{equation}
 L'=-2\cot r\,(u')^2\leq0,
 \qquad 0<r\leq\frac\pi2.                \label{eq:shooting-energy}
\end{equation}

Using the bound on $L$, the coercivity of $e^x-x-1$, and the ODE itself, it follows that there are uniform bounds on $u_a^i$, $(u_a^i)'$ and $(u_a^i)''$ up to time $\pi/2$.  By a standard compactness argument we obtain the required solution, which exists on $[0, \pi/2]$ as claimed.
\end{proof}

In view of Lemma \ref{l:ODEexists}, we may now define the shooting map
\[
 \Psi(a):=u_a'\left(\frac\pi2\right).
\]
By construction this is a continuous function of $a$, and our goal is to construct infinitely many values of $a$ for which $\Psi(a) = 0$.  This will follow as a consequence of a precise asymptotic for $\Psi$:

\begin{prop} \label{p:shootingasymptotic}
There exist constants $C>0$ and $\delta\in\R$ such that
\begin{equation}
 \Psi(a)=Ce^{-a/4}
 \cos\left(\frac{\sqrt7}{4}a+\delta\right)+o(e^{-a/4})
 \qquad(a\to+\infty).                                           \label{eq:shooting-asymptotic}
\end{equation}
\end{prop}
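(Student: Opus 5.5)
The plan is a matched-asymptotics argument made rigorous: an inner region near $r=0$ where (\ref{eq:radial}) becomes a Liouville-type equation, an outer region where $u_a$ is a small perturbation of an explicit singular solution, and a matching at an intermediate scale. The starting observation is that (\ref{eq:radial}) has the explicit singular solution
\[
u_*(r)=\log\frac{1}{2\sin^2 r},
\]
as one checks directly: $u_*''+2\cot r\,u_*'=\frac{2-4\cos^2 r}{\sin^2 r}=4-\frac{2}{\sin^2 r}=4(1-e^{u_*})$. It is symmetric about the equator, so $u_*'(\pi/2)=0$. This explains why (\ref{eq:shooting-asymptotic}) has no $O(1)$ term. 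The claim is then that, for $a\to\infty$, $u_a$ converges to $u_*$ on compact subsets of $(0,\pi/2]$, and the leading correction is an oscillating solution of the linearization of (\ref{eq:radial}) at $u_*$, with amplitude $e^{-a/4}$ and phase $\frac{\sqrt7}{4}a$.

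\emph{Inner region.} Set $s=e^{a/2}r$ and $u_a(r)=a+w(s)$. Then (\ref{eq:radial}) becomes
\[
w''+\tfrac{2}{s}w'+4e^{w}=O(e^{-a})\,(1+s^2|w'|/s+|e^w|),
\]
with $w(0)=0$ and $w'(0)=0$. The limit problem $w''+\frac2s w'=-4e^w$ is the radial Liouville equation in $\mathbb R^3$, with singular solution $w_\infty=\log\frac{1}{2s^2}$.

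Putting $t=\log s$ and $w=w_\infty+v$ gives the autonomous equation
\[
\ddot v+\dot v+2(e^v-1)=0 .
\]
This equation has the Lyapunov function $\tfrac12\dot v^2+2(e^v-v-1)$, which decays like $e^{-t}$ at leading order. Its linearization has characteristic roots $-\tfrac12\pm \tfrac{\sqrt7}{2}i$. By standard ODE theory for a hyperbolic focus, the regular solution then satisfies
\[
v(t)=A e^{-t/2}\cos\!\big(\tfrac{\sqrt7}{2}t+\phi_0\big)+O(e^{-t}), \qquad t\to\infty,
\]
for some fixed $A>0$ and $\phi_0$. Here $A\neq0$ because $v\not\equiv0$ and a nontrivial trajectory converging to the focus has nonzero leading mode.

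\emph{Outer region and matching.} Linearize (\ref{eq:radial}) at $u_*$:
\[
L\phi:=\phi''+2\cot r\,\phi'+\frac{2}{\sin^2 r}\phi=0 \quad\text{on }(0,\pi/2].
\]
Near $r=0$ this is the same Euler equation as above, so it has a two-dimensional solution space spanned by solutions of the form $r^{-1/2}\cos(\frac{\sqrt7}{2}\log r+\cdot)(1+O(r^2))$.

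Undoing the scaling, $\log s=\log r+a/2$, so in an intermediate region $e^{-a/2}\ll r\ll 1$ we have
\[
u_a-u_* \approx A e^{-a/4} r^{-1/2}\cos\!\big(\tfrac{\sqrt7}{2}\log r+\tfrac{\sqrt7}{4}a+\phi_0\big).
\]
I will prove that on $[r_a,\pi/2]$, with for example $r_a=e^{-a/4}$,
\[
u_a-u_*=e^{-a/4}\,\mathrm{Re}\big(c\,e^{i\frac{\sqrt7}{4}a}\,\Phi(r)\big)+o(e^{-a/4}),
\]
where $\Phi$ is the complex solution of $L\Phi=0$ with $\Phi\sim r^{-1/2+i\sqrt7/2}$ at $0$, and the same estimate holds in $C^1$. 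The proof writes the equation for $\psi=u_a-u_*$ as
\[
L\psi=\frac{4}{2\sin^2 r}\,(e^\psi-1-\psi),
\]
which is quadratic in $\psi$. I then use variation of parameters together with a bootstrap: as long as $|\psi|\lesssim e^{-a/4}r^{-1/2}$, the nonlinear term contributes only $O(e^{-a/2}r^{-1})$ relative corrections, and these are integrable against the weights on $[r_a,\pi/2]$.

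Evaluating the expansion at $r=\pi/2$ gives
\[
\Psi(a)=u_a'(\pi/2)=e^{-a/4}\,\mathrm{Re}\big(c\,\Phi'(\pi/2)e^{i\frac{\sqrt7}{4}a}\big)+o(e^{-a/4}).
\]
This is (\ref{eq:shooting-asymptotic}) with $C=|c\,\Phi'(\pi/2)|$. To see that $C>0$, I need $\Phi'(\pi/2)\neq0$. If it vanished, then $\mathrm{Re}\,\Phi$ and $\mathrm{Im}\,\Phi$ would both be real solutions of $L\phi=0$ with vanishing derivative at $\pi/2$. Such solutions form a one-dimensional space, so $\mathrm{Re}\,\Phi$ and $\mathrm{Im}\,\Phi$ would be linearly dependent. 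That is impossible, since their asymptotics at $0$ are independent.

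\emph{Main obstacle.} I expect the hardest step to be the rigorous matching. The inner expansion from the Liouville dynamics has error $O(e^{-t})$ together with $O(e^{-a})$ perturbation terms. These must be shown to remain $o(e^{-a/4})$ after transfer to the outer variables, uniformly at the matching radius $r_a$, in both value and derivative.

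My first approach is to choose the matching radius $r_a=e^{-a/2}s_a$ with $s_a=e^{a/4}$. At this radius the inner error $O(s_a^{-1})$ is $o(s_a^{-1/2})$, and the outer nonlinear error $O(e^{-a/2}r_a^{-1})$ is also small relative to $e^{-a/4}r_a^{-1/2}$. I will then check that the $2\times2$ transfer matrix of $L$ from $r_a$ to $\pi/2$, expressed in the basis $r^{-1/2}e^{\pm i\frac{\sqrt7}{2}\log r}$, stays bounded. If direct estimates turn out to be awkward, I will instead use the Lyapunov functional $L(v)$ from (\ref{eq:shooting-energy}) to get a priori bounds that keep $\psi$ in the linear regime.
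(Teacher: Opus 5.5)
Your proposal is correct and is essentially the paper's proof. It uses the same singular solution ($u_*=F$), the same blow-up limit $\ddot v+\dot v+2(e^v-1)=0$ (the paper's $Y_\infty$) spiralling into the focus with exponents $-\tfrac12\pm i\tfrac{\sqrt7}{2}$, the same linear propagation of the oscillating mode out to $r=\pi/2$, and the same nondegeneracy of that mode's derivative at the equator (your Neumann-space argument is the paper's Wronskian argument). The differences are minor. The paper works in $t=\log\tan(r/2)$ and matches at $s=\epsilon A$ with $\epsilon$ small, so a plain Gronwall bound suffices; your matching at $r_a=e^{-a/4}$ instead needs the inner comparison run in a norm in which the focus contracts, and the outer estimate stated in weighted form $o(e^{-a/4}r^{-1/2})$. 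Also, the nonlinear term should read $L\psi=-\frac{2}{\sin^2 r}(e^\psi-1-\psi)$, a harmless sign slip.
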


We will build up to the proposition through a series of lemmas.
The first step is a change of variables followed by the comparison to a universal singular solution which we expect to approximate our solutions for large $a$.  Set
\[
 t=\log\tan\frac r2,\qquad -\infty<t\leq0.
\]
The function
\[
 F(r):=-2\log(\sin r)-\log2
\]
is an exact singular solution of \eqref{eq:radial}.  Set
\[
 A:=\frac a2+\frac32\log2
\]
and, writing $r(t)=2\arctan(e^t)$, define
\[
 y_A(t):=u_a(r(t))-F(r(t)),\qquad Y_A(s):=y_A(s-A).
\]
A direct calculation gives
\begin{align}
 y_A''-\tanh t\,y_A'+2(e^{y_A}-1)&=0,                            \label{eq:cylindrical}\\
 Y_A''+Y_A'+2(e^{Y_A}-1)&=q_A(s)Y_A',                            \label{eq:shifted} \qquad q_A(s):=1+\tanh(s-A).
\end{align}
By construction, $F_t(0)=0$ and $dr/dt=1$ at $t=0$, so
\begin{equation}
 \Psi(a)=y_A'(0)=Y_A'(A).                                       \label{eq:shooting-identity}
\end{equation}
The choice of $A$ cancels the constant term in the incoming expansion.  In other words, 
for every fixed $A$,
\begin{equation}
 Y_A(s)=2s+O_A(e^{2s}),\qquad
 Y_A'(s)=2+O_A(e^{2s})\quad(s\to-\infty).                      \label{eq:incoming-YL}
\end{equation}
The next step is to show a quantitative convergence of $Y_A$ to a function $Y_{\infty}$ solving the limiting equation
\begin{equation}
 Y_{\infty}''+Y_{\infty}'+2(e^{Y_{\infty}}-1)=0. \label{eq:limiting-orbit}
\end{equation}

\begin{lem} \label{l:limiteqn} There exists a unique solution $Y_{\infty}(s)$ to (\ref{eq:limiting-orbit}) defined on $\mathbb R$ which satisfies
\begin{equation}
 Y_{\infty}(s)=2s+O(e^{2s}),\qquad Y'_{\infty}(s)=2+O(e^{2s})\quad(s\to-\infty). \label{eq:incoming-Y}
\end{equation}
Furthermore, there exist $s_0<0$, $A_0>0$, and a constant $C_{s_0}$
such that, for every $A\geq A_0$,
\begin{equation}
 |Y_A(s_0)-Y_\infty(s_0)|+|Y_A'(s_0)-Y_\infty'(s_0)|
 \leq C_{s_0}e^{-2A}.
 \label{eq:incoming-comparison}
\end{equation}
and, uniformly for $A\geq A_0$, one has
\begin{equation}
 Y_A(s)=2s+O(e^{2s}),\qquad
 Y_A'(s)=2+O(e^{2s})\quad(s\to-\infty).                      \label{eq:uniformincoming-YL}
\end{equation}
\end{lem}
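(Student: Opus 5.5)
The argument has two parts: an unstable-manifold construction of $Y_\infty$ at $s=-\infty$, and a perturbative comparison of $Y_A$ with $Y_\infty$ on a fixed half-line $(-\infty,s_0]$.

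\textbf{Construction of $Y_\infty$.} Set $Y_\infty = 2s + w$ and rewrite \eqref{eq:limiting-orbit} as
\begin{align*}
 w'' + w' + 2 = -2(e^{2s}e^{w} - 1),
\end{align*}
that is, $w''+w' = -2e^{2s}e^{w}$. Since the forcing is $O(e^{2s})$, I look for $w$ as a fixed point of the integral operator
\begin{align*}
 (\mathcal T w)(s) = -2\int_{-\infty}^{s}\bigl(1-e^{-(s-\sigma)}\bigr)e^{2\sigma}e^{w(\sigma)}\,d\sigma .
\end{align*}
This formula comes from variation of constants with kernel solutions $1$ and $e^{-s}$, imposing $w\to0$ and $w'\to0$ at $-\infty$.

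On the weighted space $\{\,|w|\le e^{2s}\,\}$ over $(-\infty,s_0]$, with $s_0$ sufficiently negative, $\mathcal T$ is a contraction. This gives existence and uniqueness of a solution with $w,w'=O(e^{2s})$ on $(-\infty,s_0]$. Uniqueness within the class \eqref{eq:incoming-Y} holds because any such solution satisfies the same integral equation: the two homogeneous modes $1$ and $e^{-s}$ are excluded by the boundary condition at $-\infty$.

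To extend $Y_\infty$ to all of $\mathbb R$, I use the energy $E=\tfrac12 (Y')^2+2(e^{Y}-Y-1)$, which satisfies $E'=-(Y')^2\le 0$. Once $E$ is finite at $s_0$ it stays bounded, so $Y$ and $Y'$ remain bounded and there is no blow-up.

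\textbf{Comparison for $Y_A$.} I do the same thing for \eqref{eq:shifted}. Set $Y_A=2s+w_A$. The only new ingredients are the term $q_A(s)Y_A'$ and the normalization \eqref{eq:incoming-YL}. For $s\le s_0$ and $A\ge A_0$ we have
\begin{align*}
 0 < q_A(s) = \frac{2e^{2(s-A)}}{1+e^{2(s-A)}} \le 2e^{2(s-A)},
\end{align*}
so $q_A Y_A' = O(e^{2s}e^{-2A})$.

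The equation for $w_A$ becomes
\begin{align*}
 w_A'' + w_A' = -2e^{2s}e^{w_A} + q_A(2+w_A') - 2q_A .
\end{align*}
The constant term must be checked carefully: $2+2-2$ comes from $Y''=w''$, $Y'=2+w'$, and $-2$ from the $2(e^{Y}-1)$ term, with $q_A\cdot 2$ balancing. I will write the expansion out explicitly. By \eqref{eq:incoming-YL}, $w_A$ solves the corresponding integral equation $w_A=\mathcal T_A w_A$, where $\mathcal T_A$ is $\mathcal T$ plus a term with kernel integrating $q_A(\cdots)$. That extra term has operator norm and size $O(e^{-2A})$ in the weighted space.

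The same contraction argument, now with uniform constants, gives \eqref{eq:uniformincoming-YL}. Subtracting the two fixed-point equations then gives $\|w_A-w_\infty\|\le Ce^{-2A}$ in the weighted $C^1$ norm, and evaluating at $s_0$ yields \eqref{eq:incoming-comparison}.

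\textbf{Main obstacle.} The delicate step is justifying that $Y_A$ really satisfies the integral equation on the whole half-line $(-\infty,s_0]$ with $A$-independent constants. The asymptotics \eqref{eq:incoming-YL} are only stated as $O_A$, and $A$ is tied to $a$ through the regularity of $u_a$ at $r=0$. My first approach is to go back to the regular origin and derive the expansion $u_a=a+O(r^2)$ near $r=0$ from Lemma \ref{l:ODEexists} and the fixed-point structure there. Translated through $t=\log\tan(r/2)$ and the shift by $A$, this shows $w_A$ lies in the contraction ball for $s\le s_0-\,$const. The contraction then upgrades the $O_A$ bounds to uniform ones. If that route fails, I would fall back on a Gronwall estimate on $|w_A-w_\infty|+|w_A'-w_\infty'|$ weighted by $e^{-2s}$.
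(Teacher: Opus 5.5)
Your proposal is essentially the paper's proof: the same Volterra operator with kernel $1-e^{-(s-\sigma)}$ applied to $Y=2s+w$, the same $e^{-2s}$-weighted $C^1$ norm with a common contraction constant for $A\geq A_0$ (using $0\le q_A\le 2e^{-2A}e^{2s}$), the same subtraction $\|w_A-w_\infty\|\le \kappa\|w_A-w_\infty\|+Ce^{-2A}$, and the same energy $E$ for global existence of $Y_\infty$.

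There are two small corrections. First, the equation is $w_A''+w_A'=-2e^{2s}e^{w_A}+q_A(2+w_A')$, with no extra $-2q_A$, because the constants already cancel on the left. Second, your ``main obstacle'' needs no $A$-uniform control at the origin. For each fixed $A$, the $O_A$ bound gives $|w_A|,|w_A'|\le 1$ on some $A$-dependent half-line, and the fixed-point identity then places $w_A$ in the contraction ball there. Uniqueness of the fixed point on that half-line, together with ODE uniqueness, identifies $w_A$ with the fixed point on all of $(-\infty,s_0]$; this is the paper's ``unique continuation'' step. Your route via $|u_a-a|\le Ce^{a}r^2$ for $r\le c\,e^{-a/2}$ also works.
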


\begin{proof} We will solve for functions $h_A$ obtained by removing the leading order asymptotic for $Y_A$ and define $h_A$ via the equation 
\[
 Y_A=2s+h_A.
\]
Our goal is to reconstruct $h_A$ via a Banach space fixed point argument for a certain operator $\mathcal F_A$ for large $A$.  We will furthermore show continuity of the operators $\mathcal F_A$ to a formal limiting operator $\mathcal F_{\infty}$ whose fixed point is a function $h_{\infty}$ such that $Y_{\infty} = 2s + h_{\infty}$ solves (\ref{eq:limiting-orbit}).  

We fix $s_0 < 0$ sufficiently negative and then on $(-\infty,s_0]$, $h_A$ is a fixed point of
\begin{equation}
 (\mathcal F_ A v)(s)=\int_{-\infty}^s(1-e^{-(s-\sigma)})
 \left[q_A(\sigma)(2+v'(\sigma))-2e^{2\sigma+v(\sigma)}\right]\dd\sigma.
                                                                    \label{eq:volterra}
\end{equation}
We define the operator $\mathcal F_{\infty}$ by equation (\ref{eq:volterra}) with $q_{\infty} = 0$.
Define the weighted norm
\[
 \|v\|_{s_0}:=\sup_{s\leq s_0}e^{-2s}(|v(s)|+|v'(s)|),
\]
We next claim that $\mathcal F_A$ are contractions uniformly for all sufficiently large $A$.  Fix $R>2e+1$ and assume $\|v\|_{s_0}\leq R$.  After decreasing $s_0$,
$|v(s)|\leq1, |v'(s)| \leq 1$ for $s\leq s_0$.  Therefore
\begin{align} \label{f:operatorestimate}
 e^{-2s}\bigl(|\mathcal F_Av(s)|
 +|(\mathcal F_Av)'(s)|\bigr)
 &\leq
 3e^{-2s}\int_{-\infty}^s q_A(\sigma)\dd\sigma
 +2e\,e^{-2s}\int_{-\infty}^s e^{2\sigma}\dd\sigma.
\end{align}
To estimate the right hand side we first observe
\[
 0\leq q_A(s)\leq2e^{-2A}e^{2s}.
\]
Hence the $q_A$-term in \eqref{f:operatorestimate}, together with its derivative, has weighted
norm at most $C e^{-2A}$.  The final term in (\ref{f:operatorestimate}) is easily estimated thus yielding for sufficiently large $A$ that $\mathcal F_A$ preserves the ball of radius $R$.  Using the same ideas and the mean value theorem we furthermore get
\[
 \|\mathcal F_A v-\mathcal F_A w\|_{s_0}
 \leq e e^{2s_0}\|v-w\|_{s_0}.
\]
Thus, after decreasing $s_0$ and increasing $A_0$ if necessary, the
maps $\mathcal F_A$, for $A\geq A_0$, together with
$\mathcal F_\infty$, preserve a fixed ball and have a common
contraction constant $\kappa<1$. In particular we obtain a fixed point $h_{\infty}$ of $\mathcal F_{\infty}$ in this ball.  Furthermore, the estimate \eqref{eq:incoming-YL} shows that the
geometric solution $h_A$ belongs to ball of radius $R$ in the norm defined on $(-\infty, s_A]$ for some $s_A$ depending on $A$.  Unique continuation then identifies it with the fixed point on $(-\infty,s_0]$.

We finally are ready to show that $h_A$ converges to $h_{\infty}$.  Using the uniform contraction mapping estimate as well as the estimate for the $q_A$ integral discussed above, 
\[
 \|h_A-h_{\infty}\|_{s_0}= ||\mathcal F_A h_A - \mathcal F_{\infty} h_{\infty} ||_{s_0} = ||\mathcal F_A h_A - \mathcal F_A h_{\infty} + \mathcal F_A h_{\infty} - \mathcal F_{\infty} h_{\infty} ||_{s_0} \leq \kappa\|h_A-h_{\infty}\|_{s_0}+Ce^{-2A}.
\]
Since $\kappa < 1$ this implies \eqref{eq:incoming-comparison}.
This also proves the uniform incoming control asserted after
\eqref{eq:incoming-YL}.

We finally show that $Y_{\infty}$ extends to all positive times.  Similar to Lemma \ref{l:ODEexists}, the Lyapunov functional 
\[
 E(s):=\frac12(Y_\infty')^2+2(e^{Y_\infty}-Y_\infty-1)
\]
satisfies $E'=-(Y_\infty')^2$, and the long time existence follows.
\end{proof}

\begin{lem} \label{l:Ylimitasymptotic}
Let
\[
 \lambda:=-\frac12+ i\frac{\sqrt{7}}{2}.
\]
There exists $\mu \in\mathbb C\setminus\{0\}$ such that
\begin{align}
 Y_\infty(s)&=\operatorname{Re}(\mu e^{\lambda s})+O(e^{-s}),              \label{eq:Y-asymptotic}\\
 Y_\infty'(s)&=\operatorname{Re}(\lambda \mu e^{\lambda s})+O(e^{-s})     \label{eq:Yprime-asymptotic}
\end{align}
as $s\to+\infty$.  
\end{lem}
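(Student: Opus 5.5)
We need to prove that $Y_\infty$ converges to $0$ as $s\to+\infty$ and does so with the linearized asymptotics. The linearization of \eqref{eq:limiting-orbit} at $Y=0$ is $Y''+Y'+2Y=0$, with characteristic roots $\lambda,\bar\lambda=-\tfrac12\pm i\tfrac{\sqrt7}{2}$. So the plan has three steps. First, show $Y_\infty(s)\to 0$ and $Y_\infty'(s)\to0$. Second, show exponential decay at the rate $e^{-s/2}$. Third, use variation of parameters to extract the leading mode, with the quadratic remainder contributing $O(e^{-s})$.

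\textbf{Step 1 (convergence to the equilibrium).} We use the Lyapunov functional $E(s)=\tfrac12(Y_\infty')^2+2(e^{Y_\infty}-Y_\infty-1)$ from the proof of Lemma~\ref{l:limiteqn}. It satisfies $E'=-(Y_\infty')^2$ and $E\ge0$. Hence $E$ decreases to a limit $E_\infty$, and $\int^\infty (Y_\infty')^2<\infty$. Since $E$ is bounded, $Y_\infty$ and $Y_\infty'$ are bounded, and by the ODE so is $Y_\infty''$. Thus $Y_\infty'\to0$. Then $E\to E_\infty$ forces $e^{Y_\infty}-Y_\infty-1$ to converge. By the equation, $Y_\infty''+2(e^{Y_\infty}-1)\to 0$. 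If $Y_\infty$ stayed near a nonzero value $c$ with $e^c-c-1=E_\infty/2$, then $Y_\infty''\approx -2(e^c-1)\ne0$, contradicting $Y_\infty'\to0$. Hence $E_\infty=0$ and $(Y_\infty,Y_\infty')\to(0,0)$. Equivalently, $0$ is the unique equilibrium and it is a hyperbolic sink, so the $\omega$-limit set is $\{0\}$.

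\textbf{Step 2 (exponential decay).} Write the equation as a first-order system $Z'=MZ+N(Z)$ with $Z=(Y,Y')$. Here $M$ has eigenvalues $\lambda,\bar\lambda$, and $|N(Z)|\le C|Z|^2$ near $0$. Since $\operatorname{Re}\lambda=-\tfrac12$, a standard Gronwall argument, or the quadratic Lyapunov function adapted to $M$, gives $|Z(s)|\le Ce^{-(1/2-\epsilon)s}$ for $s$ large once $|Z|$ is small. Bootstrapping with the quadratic nonlinearity then upgrades this to $|Z(s)|\le Ce^{-s/2}$.

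\textbf{Step 3 (extracting the mode).} Variation of constants from a large $s_1$ gives
\[
Z(s)=e^{M(s-s_1)}Z(s_1)+\int_{s_1}^s e^{M(s-\sigma)}N(Z(\sigma))\,d\sigma .
\]
Since $|N(Z(\sigma))|\le Ce^{-\sigma}$ and $\|e^{M\tau}\|\le Ce^{-\tau/2}$, the integral $\int_{s_1}^\infty e^{-M\sigma}N\,d\sigma$ converges. We rewrite
\[
Z(s)=e^{Ms}\Bigl(e^{-Ms_1}Z(s_1)+\int_{s_1}^\infty e^{-M\sigma}N\,d\sigma\Bigr)-\int_s^\infty e^{M(s-\sigma)}N\,d\sigma .
\]
The last term is bounded by $C\int_s^\infty e^{(s-\sigma)/2}e^{-\sigma}\,d\sigma=O(e^{-s})$. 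The first term is a solution of the linear system, and its first component has the form $\operatorname{Re}(\mu e^{\lambda s})$ with derivative $\operatorname{Re}(\lambda\mu e^{\lambda s})$. This proves \eqref{eq:Y-asymptotic} and \eqref{eq:Yprime-asymptotic}.

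\textbf{Step 4 ($\mu\neq0$), the main obstacle.} If $\mu=0$, then $|Z|=O(e^{-s})$. Feeding this back into the estimate of Step 3, the linear part vanishes identically. Now the remainder $\int_s^\infty e^{M(s-\sigma)}N\,d\sigma$ is bounded by $C\int_s^\infty e^{(s-\sigma)/2}|Z(\sigma)|^2\,d\sigma$, which is small relative to $\sup_{\sigma\ge s}|Z(\sigma)|$. Iterating gives $\sup_{\sigma\ge s}|Z|\le \tfrac12\sup_{\sigma\ge s}|Z|$ for $s$ large, so $Z\equiv0$ on a half-line. By uniqueness for the ODE, $Y_\infty\equiv 0$, contradicting $Y_\infty(s)=2s+O(e^{2s})$ as $s\to-\infty$ from Lemma~\ref{l:limiteqn}. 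Hence $\mu\ne0$.

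I expect the care to lie in Step 1, namely ruling out a nontrivial limit of $E$, and in the bootstrap of Step 2. Both are routine once the Lyapunov structure is used.
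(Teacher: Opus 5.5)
Your argument is correct and is essentially the paper's: exponential decay toward the hyperbolic sink, then variation of constants for $Y''+Y'+2Y=N(Y)$, then the same quadratic-smallness contradiction for $\mu\neq0$. The paper diagonalizes via the complex scalar $w=Y_\infty'-\bar\lambda Y_\infty$ with $w'-\lambda w=N$, which is equivalent to your $e^{Ms}$. Your Step 1 is a useful addition: it supplies the convergence $(Y_\infty,Y_\infty')\to0$ that the paper assumes without proof when it works ``for $|X|$ sufficiently small''.

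There is one sign slip. For $\sigma\ge s$ the backward propagator obeys $\|e^{M(s-\sigma)}\|\le Ce^{(\sigma-s)/2}$, not $Ce^{(s-\sigma)/2}$. In Step 3 the remainder bound should therefore read $C\int_s^\infty e^{(\sigma-s)/2}e^{-\sigma}\,d\sigma=2Ce^{-s}$, which gives the same conclusion. In Step 4 the smallness must then come from the decay $|Z(\sigma)|=O(e^{-\sigma})$, for example $|Z(s')|\le Ce^{-s}\sup_{\sigma\ge s}|Z(\sigma)|$ for $s'\ge s$, and not merely from $|Z|$ being small.
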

\begin{proof}
We rewrite the equation for $Y_{\infty}$ in terms of a linear operator and an error term:
\[
 Y_\infty''+Y_\infty'+2Y_\infty=N(Y_\infty),\qquad N(Y_\infty):=-2(e^{Y_\infty}-1-Y_\infty)=O(Y_\infty^2).
\]
We first show a decay estimate for $Y_\infty$.  Let put $X=(Y_\infty,Y_\infty')$, interpreted as a column vector, and let
\[
B=\begin{pmatrix}0&1\\-2&-1\end{pmatrix}.\]
The ODE system is thus expressed as
\begin{align*}
    X'=BX+(0,N(Y_\infty)).
\end{align*}
To estimate this system we introduce the positive definite matrix
\[
P=\begin{pmatrix}7/4&1/4\\1/4&3/4\end{pmatrix},
\]
defined uniquely by the equation
\[
 B^TP+PB=-I.
\]
Using these properties, it follows easily that for $|X|$ sufficiently small, the quantity $Q=X^TPX$ satisfies
\[
 Q'=-|X|^2+2X^TP\binom{0}{N(Y_\infty)}
 \leq-\frac12|X|^2 \leq - \gamma_0 Q,
\]
where last inequality follows since $P$ is positive definite.  Thus  $|Y_\infty|+|Y_\infty'|=O(e^{-\gamma_0s})$ for some $\gamma_0>0$.

The Green kernel of $\tfrac{\del^2}{\del t^2} + \tfrac{\del}{\del t}+2$ is
\[
 K_0(t)=\frac{2}{\sqrt{7}} e^{-t/2}\sin \left(\frac{\sqrt{7}}{2} t \right),\qquad t\geq0,
\]
Consequently, if
$|Y_\infty|+|Y_\infty'|=O(e^{-\gamma s})$, Duhamel's formula gives
\[
 |Y_\infty(s)|+|Y_\infty'(s)|
 =O(e^{-s/2})+O(e^{-\min\{2\gamma,1/2\}s}),
\]
Iterating this estimate
finitely many times gives
\begin{equation}
 |Y_\infty(s)|+|Y_\infty'(s)|=O(e^{-\gamma s})
 \quad\text{for every }\gamma<\frac12.                          \label{eq:Y-decay}
\end{equation}
Now set $w=Y_\infty'-\overline\lambda Y_\infty$.  Then
\[
 w'-\lambda w=N(Y_\infty).
\]
Choosing $\gamma>1/4$ in \eqref{eq:Y-decay}, we see that
$e^{-\lambda s}N(Y_\infty(s))$ is integrable.  Therefore
\[
 c:=\lim_{s\to\infty}e^{-\lambda s}w(s)
\]
exists and
\begin{equation}
 w(s)=ce^{\lambda s}
 -e^{\lambda s}\int_s^\infty e^{-\lambda\sigma}N(Y_\infty(\sigma))\dd\sigma.
                                                                    \label{eq:w-integral}
\end{equation}
This first gives $Y_\infty,Y_\infty'=O(e^{-s/2})$, so $N(Y_\infty)=O(e^{-s})$, thus plugging back into
\eqref{eq:w-integral} gives $w=ce^{\lambda s}+O(e^{-s})$.  Taking the imaginary part of $w$ gives 
\eqref{eq:Y-asymptotic}--\eqref{eq:Yprime-asymptotic} follow with
$\mu =-\frac{2i}{\sqrt7}\,c$.

Finally we claim that $c$ cannot vanish.  If $c=0$,
\eqref{eq:w-integral} and its complex
conjugate imply, for all sufficiently large $\sigma_0$,
\[
 m_{\sigma_0}\leq Ce^{-\sigma_0/2}m_{\sigma_0}^2,
 \qquad
 m_{\sigma_0}:=\sup_{\sigma\geq\sigma_0}e^{\sigma/2}|Y_\infty(\sigma)|.
\]
Noting that $m_{\sigma_0}$ is uniformly bounded by the proven decay estimates, this is a contradiction for sufficiently large $\sigma_0$ unless $m_{\sigma_0} = 0$.  However if $m_{\sigma_0} = 0$ then $Y_{\infty}$ vanishes on a forward half-line, and uniqueness in
\eqref{eq:limiting-orbit} would give $Y_{\infty}\equiv0$, contradicting
\eqref{eq:incoming-Y}.  Thus $c\neq0$ and thus $\mu \neq 0$.
\end{proof}

\begin{proof}[Proof of Proposition \ref{p:shootingasymptotic}] The proof builds on Lemma \ref{l:limiteqn}, which shows that $Y_A$ is well-approximated by $Y_{\infty}$ at time close to $-\infty$, and Lemma \ref{l:Ylimitasymptotic}, which gives the long time behavior of the limit solution $Y_{\infty}$.  We now propagate the estimates forward in time to show that $Y_A$ is well-approximated by $Y_{\infty}$ at $s = A$.

We fix small $\epsilon > 0$ and let $s_A:=\epsilon A$.  Writing \eqref{eq:shifted} and \eqref{eq:limiting-orbit} as first-order
systems and using Gronwall's inequality on the interval $[s_0, s_A]$ and the estimate \eqref{eq:incoming-comparison}, give
\begin{align*}
 \left|\binom{Y_A(s_A)}{Y_A'(s_A)}-
 \binom{Y_\infty(s_A)}{Y_\infty'(s_A)}\right|
 &\leq Ce^{K (s_A - s_0)}\left(e^{-2A}+
 \int_{s_0}^{s_A}q_A(\sigma)\dd\sigma\right)\\
 &\leq C_{s_0}\exp\bigl([K\epsilon-2(1-\epsilon)]A\bigr),
\end{align*}
where $K$ is independent of $A$ and we used
\[
 \int_{s_0}^{s_A}q_A(\sigma)\dd\sigma \leq \int_{-\infty}^{s_A}q_A(\sigma)\dd\sigma
 =\log(1+e^{2(s_A-A)})\leq e^{-2(1-\epsilon)A}.
\]
Choose $\epsilon$ so small that
\[
 K\epsilon-2(1-\epsilon)<-\frac{\epsilon}{2}.
\]
Equations \eqref{eq:Y-asymptotic}--\eqref{eq:Yprime-asymptotic} then yield
\begin{equation}
 \binom{Y_A(s_A)}{Y_A'(s_A)}
 =\operatorname{Re}\left[\mu e^{\lambda s_A}
 \binom{1}{\lambda}\right]+o(e^{-s_A/2}).                       \label{eq:core-data}
\end{equation}

We now return to the original time variable $t = s - A$ and then define 
\[
t_A:=s_A-A=-(1-\epsilon)A.
\]
The final step is to propagate the estimate through $[t_A, 0]$, achieved by comparison to the relevant linearized equation.  Linearizing \eqref{eq:cylindrical} at zero gives
\begin{equation}
 \mathcal Tz:=z''-\tanh t\,z'+2z=0.                              \label{eq:terminal-linear}
\end{equation}
By arguments similar to those used above, one easily shows there is a unique complex solution $Z$ of \eqref{eq:terminal-linear} such that
\begin{equation}
 Z(t)=e^{\lambda t}(1+O(e^{2t})),\qquad
 Z'(t)=\lambda e^{\lambda t}(1+O(e^{2t}))                       \label{eq:terminal-mode}
\end{equation}
as $t\to-\infty$.  Moreover, if $U(t,\sigma)$ denotes the fundamental matrix of
\eqref{eq:terminal-linear} in the variables $(z,z')$, we claim
\begin{equation}
 \|U(t,\sigma)\|\leq Ce^{-(t-\sigma)/2},
 \qquad -\infty<\sigma\leq t\leq0.                              \label{eq:terminal-propagator}
\end{equation}
To prove this, we consider the change of variable 
\[
v(t):=(\cosh t)^{-1/2}z(t),
\]
and the energy functional
\[
 E(v) := \frac{1}{2}|v'|^2+
 \frac{1}{2}\left(\frac{7}{4}+\frac{3}{4}\operatorname{sech}^2t\right)|v|^2.
\]
A computation shows that
\[
\brs{E'} \leq \frac{6 }{7} \operatorname{sech}^2 t E,
\]
which yields (\ref{eq:terminal-propagator}) by integration.

Define on $[t_A,0]$ the linear approximation
\[
 z_A(t):=e^{-A/2}\operatorname{Re}\bigl(\mu \, e^{i\frac{\sqrt7}{2} A}Z(t)\bigr).
\]
Because
\[
 e^{-A/2}e^{i\frac{\sqrt7}{2} A}e^{\lambda t_A}=e^{\lambda s_A},
\]
\eqref{eq:core-data} and \eqref{eq:terminal-mode} imply
\begin{equation}
 \left|\binom{y_A(t_A)}{y_A'(t_A)}-
 \binom{z_A(t_A)}{z_A'(t_A)}\right|=o(e^{-s_A/2}).               \label{eq:terminal-data}
\end{equation}
Let $w_A:=y_A-z_A$.  From \eqref{eq:cylindrical} and
\eqref{eq:terminal-linear},
\begin{equation}
 \mathcal Tw_A=-2(e^{y_A}-1-y_A).                            \label{eq:terminal-error}
\end{equation}
We define a norm for functions $v$ on $[t_A, 0]$ via 
\[
 \|v\|_{t_A}:=\sup_{t_A\leq t\leq0}
 e^{A/2}e^{t/2}(|v(t)|+|v'(t)|).
\]
The functions $z_A$ have uniformly bounded norm, and
\eqref{eq:terminal-propagator}--\eqref{eq:terminal-data} show
that the homogeneous propagation of the initial error is $o(1)$ in this norm.
For times $T$ such that
\begin{align*}
    \sup_{t_A \leq t \leq T < 0}   
 e^{A/2}e^{t/2}(|y_A(t)|+|y_A'(t)|) \leq R,
\end{align*}
observe
\[
 |y_A(t)|\leq Re^{-A/2}e^{-t/2}\leq Re^{-s_A/2},
\]
and hence, for large $A$,
\[
 |e^{y_A}-1-y_A|\leq C|y_A|^2\leq CR^2e^{-A}e^{-t}.
\]
Duhamel's formula and \eqref{eq:terminal-propagator} therefore give
\[
 \sup_{t_A \leq t \leq T < 0}   
 e^{A/2}e^{t/2}(|y_A(t)|+|y_A'(t)|) \leq C_0+o(1)+Ce^{-s_A/2}R^2.
\]
Taking $R>2C_0$ and using a standard continuity argument yields
\begin{equation}
 |y_A(t)|+|y_A'(t)|\leq Ce^{-A/2}e^{-t/2},
 \qquad t_A\leq t\leq0.                                       \label{eq:terminal-bound}
\end{equation}
Applying Duhamel's formula to \eqref{eq:terminal-error} at $t=0$ and using
\eqref{eq:terminal-propagator}, \eqref{eq:terminal-data}, and
\eqref{eq:terminal-bound}, we obtain
\begin{align*}
 |w_A'(0)|
 &\leq Ce^{t_A/2}o(e^{-s_A/2})
 +Ce^{-A}\int_{t_A}^0e^{-t/2}\dd t\\
 &=o(e^{-A/2})+O(e^{-A/2}e^{-s_A/2})
 =o(e^{-A/2}).
\end{align*}
Consequently, by \eqref{eq:shooting-identity},
\begin{equation}
 \Psi(a)=e^{-A/2}
 \operatorname{Re}\bigl(\mu Z'(0)e^{i\frac{\sqrt7}{2} A}\bigr)+o(e^{-A/2}).  \label{eq:shooting-final}
\end{equation}
Since $A=a/2+(3/2)\log2$,
\eqref{eq:shooting-final} becomes
\[
 \Psi(a)=e^{-a/4}\operatorname{Re}
 \left(Be^{i\sqrt7a/4}\right)+o(e^{-a/4}),
\]
where
\[
 B:=2^{-3/4}\mu Z'(0)
 \exp\left(i\frac{3\sqrt7}{4}\log2\right).
\]
The final step is to show that $B \neq 0$, which will follow if we show $Z'(0) \neq 0$ is nonzero.  Since the coefficients of (\ref{eq:terminal-linear}) are real $\bar{Z}$ is also a solution, and it follows that the Wronskian $W = Z \bar{Z}' - Z' \bar{Z}$ is a solution of $W' = \tanh t W$ which is moreover nowhere vanishing.  It follows that $Z'(0) \neq 0$.  Expressing $B=Ce^{i\delta}$ thus proves \eqref{eq:shooting-asymptotic}.
\end{proof}

\begin{thm} \label{t:4dnonuniquenessbulk} 
Consider the standard quaternionic Hopf surface $(S^1 \times \SU(2), I,J,K, g_{\mathrm{Hopf}})$. There exists a sequence $\{u_k\} \in C^{\infty}(S^1 \times \SU(2))$ such that
\begin{enumerate} [label={(\arabic*)}]
    \item $\Phi(e^{u_k} g_{\mathrm{Hopf}}) = e^{u_k} g_{\mathrm{Hopf}}$,
    \item $\lim_{k \to \infty} \brs{u_k}_{C^0} = \infty$.
\end{enumerate}
\end{thm}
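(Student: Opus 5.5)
The plan is to combine the shooting asymptotic of Proposition \ref{p:shootingasymptotic} with the reflection construction and Lemma \ref{l:constant_chern_scalar}.

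First, zeros of $\Psi$. Write $\Psi(a)=e^{-a/4}\bigl(C\cos(\tfrac{\sqrt7}{4}a+\delta)+o(1)\bigr)$ with $C>0$. Choose $a_k^\pm$ with $\tfrac{\sqrt7}{4}a_k^+ +\delta = 2\pi k$ and $\tfrac{\sqrt7}{4}a_k^- +\delta = 2\pi k+\pi$. For $k$ large the $o(1)$ term is below $C/2$, so $\Psi(a_k^+)>0>\Psi(a_k^-)$. Continuity of $a\mapsto\Psi(a)$ then gives $a_k\in(a_k^+,a_k^-)$ with $\Psi(a_k)=0$, and $a_k\to+\infty$. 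Continuity follows from continuous dependence on data in Lemma \ref{l:ODEexists}: it holds away from $r=0$ by standard ODE theory, and near $r=0$ by uniqueness of the regular solution together with the compactness argument. Justifying this carefully near the singular point is the one point needing care.

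Next, build the metric. For each $k$, the solution $u_{a_k}$ on $[0,\pi/2]$ satisfies \eqref{eq:halfBC}. Its reflection $\widetilde u_k$ is $C^1$ at $r=\pi/2$. Both pieces solve \eqref{eq:radial}, which is a regular second-order ODE near $\pi/2$, so ODE uniqueness shows $\widetilde u_k$ is a smooth solution on $(0,\pi)$. It is even about both poles, and near $r=0$ the regular solution has an expansion in even powers of $r$. Hence the function $u_k$ on $S^1\times \SU(2)$, defined as $\widetilde u_k$ of the geodesic distance to a fixed pole and constant in the $S^1$ factor, is smooth.

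Then check the equation. By the standard computation quoted before \eqref{eq:radial}, $u_k$ solves \eqref{eqn:hkteinstein4d} with $\lambda=1$, so $e^{u_k}g_{\mathrm{Hopf}}$ has constant Chern scalar curvature $2$. Since $g_{\mathrm{Hopf}}$ is HKT-Einstein with constant $1$ and conformal changes preserve HKT in dimension four, Lemma \ref{l:phi4d} (equivalently the computation in Lemma \ref{l:constant_chern_scalar}) gives $\Phi(e^{u_k}g)=\Phi(g)-\Delta_{\bar\omega}u_k\,\bar\omega = e^{u_k}\bar\omega$. This is item (1).

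Finally, item (2) follows because $\brs{u_k}_{C^0}\ge |u_k(\text{pole})|=a_k\to\infty$.
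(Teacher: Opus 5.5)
Your proposal is correct and follows essentially the same route as the paper. The paper also finds zeros of $\Psi$ from Proposition~\ref{p:shootingasymptotic} by the intermediate value theorem, reflects across the equator to get smooth $u_k$ solving \eqref{eqn:hkteinstein4d}, and concludes with $|u_k|_{C^0}\ge a_k\to\infty$. The one difference is where you test for a sign change. You use the extrema of the cosine, whereas the paper uses $b_k\pm\epsilon_k$ near its zeros. Your choice makes the sign-change step slightly cleaner, because you do not have to balance $\epsilon_k$ against the $o(e^{-a/4})$ error term.
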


\begin{proof} Following the prior discussion, it suffices to find infinitely many $a_k \in \mathbb R$ such that $\Psi(a_k) = 0$.  By Proposition~\ref{p:shootingasymptotic}, there exist constants
$C>0$ and $\delta\in\mathbb R$ such that
\[
 \Psi(a)
 =
 Ce^{-a/4}\cos\left(\frac{\sqrt7}{4}a+\delta\right)
 +o(e^{-a/4})
\]
as $a\to+\infty$. We set
\[
 b_k:=
 \frac{4}{\sqrt7}
 \left(\left(k+\frac12\right)\pi-\delta\right),
\]
and an elementary argument shows that there exists small $\epsilon_k > 0$ so that
\begin{align*}
    \Psi(b_k- \epsilon_k)  \Psi(b_k + \epsilon_k) < 0,
\end{align*}
Hence $\Psi$ has a zero $a_k$ close to $b_k$ by continuity. 
By the construction above, each zero $a_k$ determines a function
$u_k\in C^\infty(S^1\times\SU(2))$ satisfying
\[
 \Phi(e^{u_k}g_{\mathrm{Hopf}})
 =
 e^{u_k}g_{\mathrm{Hopf}},
\]
and moreover since $u_k(0)=a_k$,
\[
 \brs{u_k}_{C^0}
 \geq a_k
 \longrightarrow+\infty.
\]
\end{proof}


\providecommand{\bysame}{\leavevmode\hbox to3em{\hrulefill}\thinspace}
\providecommand{\MR}{\relax\ifhmode\unskip\space\fi MR }
\providecommand{\MRhref}[2]{%
  \href{http://www.ams.org/mathscinet-getitem?mr=#1}{#2}
}
\providecommand{\href}[2]{#2}

\end{document}